\documentclass[11pt]{article}

\usepackage[utf8]{inputenc}
\usepackage[english]{babel}
\usepackage{enumerate}
\usepackage{latexsym}
\usepackage{amsthm,amsmath}
\usepackage{amssymb}
\usepackage{aliascnt} 
\usepackage{multicol}
\usepackage{tikz}
\usepackage{float} 

\usepackage[bookmarks]{hyperref} 
\hypersetup{
	pdftitle={Classification of prime graphs with 2-switch-degree at most 4},
	pdfauthor={Victor N. Schvollner},
	colorlinks=true, linkcolor=blue, citecolor=red,
	filecolor=cyan, urlcolor=magenta
}

\usetikzlibrary{babel,decorations.pathreplacing,calc,positioning,shapes.geometric,arrows.meta}

\usepackage{cleveref}

\definecolor{10}{RGB}{115,59,171}
\definecolor{8}{RGB}{212,122,240}
\definecolor{7}{RGB}{99,212,119}
\definecolor{6}{RGB}{183,240,164}
\definecolor{D}{RGB}{255,162,79}
\definecolor{E}{RGB}{255,84,0}
\definecolor{F}{RGB}{158,248,255}
\definecolor{G}{RGB}{128,135,255}
\definecolor{I}{RGB}{187,255,0}
\definecolor{A}{cmyk}{.9,.05,.4,0}
\definecolor{B}{RGB}{150,30,150}
\definecolor{C}{RGB}{186,155,189}
\definecolor{9}{RGB}{0,180,60}
\definecolor{0}{RGB}{30,123,191}
\definecolor{1}{RGB}{255,113,102}
\definecolor{2}{RGB}{41,199,92}
\definecolor{3}{RGB}{242,207,16}
\definecolor{5}{RGB}{255,15,154}
\definecolor{4}{rgb}{.8,0,.8}

\definecolor{Red}{rgb}{1,0.4,0.4}
\definecolor{Green}{rgb}{.1,.5,.1}
\definecolor{Blue}{rgb}{.1,.1,.5}
\definecolor{blue}{RGB}{0,0,255}
\definecolor{Yellow}{rgb}{.8,.4,0}
\definecolor{X}{rgb}{.8,.4,0}
\definecolor{H}{rgb}{0,0,1}
\definecolor{light}{rgb}{.67,.84,.90}
\definecolor{Cyan}{rgb}{0,1,1}
\definecolor{Purple}{rgb}{.5,0,.5}
\definecolor{Purple2}{rgb}{.5,.2,.5}
\definecolor{white}{rgb}{1.0,1.0,1.0}
\definecolor{Purple2}{rgb}{.8,.4,0}
\definecolor{Amarillo}{RGB}{225,191,73}
\definecolor{Celeste}{RGB}{117,170,219}
\definecolor{Castano}{RGB}{232,53,17}
\definecolor{Black}{RGB}{0,0,0}
\definecolor{White}{RGB}{255,255,255}
\definecolor{gris}{rgb}{.5,.5,.5}

\newtheorem{theorem}{Theorem}[section]

\newaliascnt{corollary}{theorem}
\newtheorem{corollary}[corollary]{Corollary}
\aliascntresetthe{corollary}

\newaliascnt{lemma}{theorem}
\newtheorem{lemma}[lemma]{Lemma}
\aliascntresetthe{lemma}

\newaliascnt{proposition}{theorem}
\newtheorem{proposition}[proposition]{Proposition}
\aliascntresetthe{proposition}

\pgfdeclarelayer{background2}
\pgfdeclarelayer{background}
\pgfdeclarelayer{foreground}
\pgfsetlayers{background2,background,main,foreground}

\title{Classification of prime graphs with \\ 2-switch-degree at most 4}

\author{Victor N. Schv\"ollner\thanks{%
		Instituto de Matem\'atica Aplicada San Luis (IMASL), UNSL--CONICET,
		San Luis, Argentina. \texttt{vnsi9m6@gmail.com}.
}}
\date{}

\begin{document}
	\maketitle

\begin{abstract}
			The 2-switch-degree $\deg(G)$ of a graph $G$ is the number of
		2-switches that can be performed on $G$; equivalently, it is the degree
		of $G$ as a vertex of the realization graph $\mathcal{G}(d)$ of its
		degree sequence $d$. We classify the prime graphs of 2-switch-degree at
		most~4, where a graph is \emph{prime} if it is indecomposable with
		respect to the Tyshkevich composition and every vertex takes part in
		some 2-switch. From this classification we derive a sharp dichotomy
		that recovers the global shape of $\mathcal{G}(d)$ from a single one of
		its local degrees: if $d$ has a realization $X$ with $\deg(X)=k\le3$,
		then $\mathcal{G}(d)$ is vertex-transitive and $k$-regular if and only
		if neither $T_{221}$ nor $\overline{T_{221}}$ is an induced subgraph
		of $X$. Moreover, for every $k\geq 4$ some prime
		graph of degree~$k$ carries a 2-switch raising its degree to $2k-2$.
		As a further consequence, up to isomorphism there are only
		10 realization graphs of prime graphs with degree at most~4.
	
	\medskip
	\noindent\textbf{Keywords:} 2-switch, realization graph, degree sequence,
	split graph, Tyshkevich decomposition, factor graph, prime graph.
	
	\smallskip
	\noindent\textbf{MSC 2020:} 05C07, 05C75, 05C60.
\end{abstract}

\section{Introduction}

The systematic study of the graphs sharing a common degree sequence has
long been a central theme in graph theory. A fundamental tool in this
context is the \emph{2-switch}: given two vertex-disjoint edges $ab$ and
$xy$ of a graph $G$ with $ax,by\notin E(G)$, the 2-switch on
$\{ab,xy\}$ replaces these two edges by $ax$ and $by$
(see \cite{chartrand2010graphs}). 
Since it preserves degrees, it moves between the
realizations of a fixed degree sequence $d$, which form the vertices of
the \emph{realization graph} $\mathcal{G}(d)$: the labeled graphs on a
fixed vertex set with degree sequence $d$, two being adjacent whenever
one is obtained from the other by a single 2-switch
(see \cite{arikati1999realization}). Classically, any two realizations of
$d$ are related by a finite sequence of 2-switches (see page 24 of
\cite{chartrand2010graphs}); equivalently, $\mathcal{G}(d)$ is connected.

Understanding the structure of $\mathcal{G}(d)$ is a rich area of
research: Arikati and Peled proved in \cite{arikati1999realization} that $\mathcal{G}(d)$ is Hamiltonian
whenever $d$ has majorization gap~1,
Taylor studied in \cite{taylor2006contrained} constrained versions of the switching operation, and more recent work has focused on the
connectivity of the subgraphs of $\mathcal{G}(d)$ induced by particular
graph families (see  \cite{schvollner_pseudoforests}).
A more granular, local description of $\mathcal{G}(d)$ is provided by
the \emph{2-switch-degree} (or simply the \emph{degree}) of a graph $G$,
denoted $\deg(G)$, which is the number of distinct 2-switches that can
be performed on $G$. Since two distinct 2-switches on $G$ produce two
distinct graphs, $\deg(G)$ is precisely the degree of $G$ as a vertex of
$\mathcal{G}(d)$. This parameter was introduced
in \cite{pastine20252}, where it is expressed in terms of the induced
subgraphs of $G$ of order~4.

The study of 2-switch transformations is closely related to the
Tyshkevich canonical decomposition \cite{tyshkevich2000decomposition}, which expresses every graph as a
composition $G=G_n\circ\cdots\circ G_1$ of indecomposable graphs, in a
way that is unique up to isomorphism when all the factors have at least one vertex. A key feature of this decomposition
is that all its factors, with the possible exception of $G_1$, are split
graphs. A graph $S$ is \emph{split} if its vertex set can be
partitioned into a clique $K$ and an independent set $I$; when such a
partition is fixed we write
$(S,K,I)$ and we say that $(K,I)$ is a \emph{bipartition} for $S$. Split graphs arise naturally in several areas of graph theory
and have been widely studied
(see \cite{hammer1977split, splitnordhausgaddum, whitman2020split,jaume2025nullspace}). For split graphs the 2-switch-degree admits a
particularly simple description: it equals the number of induced
$P_4$'s.

A vertex of $G$ is \emph{active} if it takes part in some 2-switch on $G$. A graph whose vertices are all active is itself called \emph{active}, and an indecomposable active graph is called \emph{prime}
(see \cite{pastine20252}). Since every factor in the Tyshkevich decomposition
of an active graph is prime, and since all these factors but at most one
are split, prime split graphs are the basic building blocks of the whole
theory. Their structure is further clarified by the \emph{factor graph}
$\Phi=\Phi(S,K,I)$, a loopless multigraph on the vertex set $I$ that records the
2-switches available in $S$. This notion was introduced
in \cite{pastine2025simple}, inspired by the $A_4$-structure of Barrus
and West \cite{barrus.west.A4}; there it is shown that an active split
graph $S$ is prime if and only if $\Phi$ is connected.

Building on these foundations, we classify the graphs whose
2-switch-degree is small, that is, those that are least flexible under
2-switch transformations. Our starting point is a complete classification
of the active graphs of degree at most~3
(\Cref{clasificacion.activo_deg=2,clasificacion.activo_deg=3}); the prime
ones among them are all split except $C_4$ and $2K_2$. This already
determines the global shape of a realization graph from a single one of
its local degrees: if $X$ realizes $d$ and $\deg(X)=k\le3$, then
$\mathcal{G}(d)$ is vertex-transitive and $k$-regular if and only if
$T_{221}$ and $\overline{T_{221}}$ are not induced subgraphs of $X$
(\Cref{G(d).transitivo}), the only exception up to complementation. The
hypothesis $k\le3$ cannot be dropped: for every $k\ge 4$ there is a prime
graph $G_k$ of degree $k$ carrying a 2-switch $\tau$ with
$\deg(\tau(G_k))=2k-2$ (\Cref{familia.Gk}), so from degree~4 on a single
2-switch may almost double the degree and regularity fails. Degree~4 is
thus the first value at which the picture is no longer rigid, and it is
the one we settle here: there are exactly 18 prime graphs of degree~4
(\Cref{clasificacion.primos.deg=4}), 12 split
(\Cref{clasificacion.split.primos.deg=4}) and 6 non-split
(\Cref{clasificacion.no.split.primos.deg=4}). Finally, we determine the
realization graphs of all 29 prime graphs of degree at most~4
(\Cref{realization.graphs.deg<5}): up to isomorphism there are only 10, so
a short list accounts for every realization graph arising in this range.
Some of our tools are stated beyond degree~4; for instance,
\Cref{C4.distinguidores} applies to every graph of degree at most~5.

The rest of the article is organized as follows. \Cref{sec_tools}
gathers the tools used throughout the paper. In \Cref{sec_deg<3} we
classify the active graphs of degree at most~2, and in \Cref{sec_deg=3}
those of degree~3. \Cref{sec_deg=4,sec_deg=4.no.split} are devoted to the
prime graphs of degree~4, split and non-split respectively. \Cref{sec_realization} studies the regularity of
realization graphs having a vertex of degree $k\le4$ and classifies them.


\section{Tools}\label{sec_tools}	

Let $G$ be a graph. We write $V(G)$ and $E(G)$ for its vertex set and
its edge set, and $|G|=|V(G)|$ for its order. The complement of $G$ is
denoted by $\overline{G}$. We write $H\prec G$ to mean that $H$ is an
induced subgraph of $G$, and $G[W]$ (or $G[v_1,\dots,v_k]$) for the
subgraph of $G$ induced by $W\subseteq V(G)$. The notation $G\approx H$
means that $G$ and $H$ are isomorphic. For $n\in\mathbb{N}=\{1,2,\ldots\}$ we put
$[n]=\{1,\dots,n\}$, and $\dot\cup$ denotes disjoint union.
The \emph{neighborhood} of $v\in V(G)$ is
$N_G(v)=\{x\in V(G):vx\in E(G)\}$, and $\deg_G(v)=|N_G(v)|$ is the
\emph{degree} of $v$ in $G$. When $G$ is clear from the context we
write $N_v$ and $d_v$. A path is denoted by the juxtaposition of its
vertices, so $uxyv$ is the $P_4$ with edges $ux,xy,yv$; for a cycle, the first vertex is repeated at the end. We write
$\omega(G)$ and $\alpha(G)$ for the clique number and the independence
number of $G$. 
If $V(G)=[n]$, the \emph{degree sequence} of $G$ is
$d=d(G)=(d_1,\dots,d_n)$. If $d=(d_v)_{v=1}^n$ is the degree
sequence of $G$, then $\overline{d}=(n-1-d_v)_{v=1}^n$ is the degree
sequence of $\overline{G}$. For graph-theoretical concepts not defined here we refer the reader to
\cite{chartrand2010graphs}.

For graphs $G$ and $H$, we denote by $\mathcal{Q}_G(H)$ the set of induced
subgraphs of $G$ isomorphic to $H$, and we put $\mathcal{Q}_G=\mathcal{Q}_G(P_4)\,\dot\cup\,\mathcal{Q}_G(C_4)\,\dot\cup\,\mathcal{Q}_G(2K_2)$.
If $\tau$ is a 2-switch on $G$, we write $\tau(G)$ for the resulting
graph. The four vertices on which $\tau$ acts induce a member of
$\mathcal{Q}_G$, and the next result counts the 2-switches on $G$ accordingly.

\begin{theorem}[\cite{pastine20252}]
	\label{degreeofG}
	For every graph $G$,
	\begin{equation}
		\label{eq18}
		\deg(G)=2|\mathcal{Q}_G (2K_2 )|+2|\mathcal{Q}_G (C_4 )|+|\mathcal{Q}_G (P_4 )|.
	\end{equation}
	In particular, if $G$ is a split graph, then $\deg(G)=|\mathcal{Q}_G (P_4 )|$.
\end{theorem}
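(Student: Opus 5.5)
The plan is to double count pairs $(\tau,W)$, where $\tau$ is a 2-switch on $G$ and $W$ is the 4-set of vertices on which $\tau$ acts. A 2-switch is determined by an unordered pair of disjoint edges $\{ab,xy\}$ together with the choice of which non-edges $ax,by$ it creates. So I would first record precisely when two descriptions give the same 2-switch: $\{ab,xy\}\mapsto\{ax,by\}$. As noted in the introduction, distinct 2-switches yield distinct graphs. Hence $\deg(G)$ counts the pairs (set of two removed edges, set of two added edges) on a common 4-set $W$ such that the removed edges form a perfect matching of $W$ present in $G$, and the added edges form a different perfect matching of $W$ absent from $G$. Every 2-switch acts on exactly one 4-set $W$, so
\[
\deg(G)=\sum_{|W|=4} s(G[W]),
\]
where $s(H)$ is the number of 2-switches on the 4-vertex graph $H$ using all its vertices.

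The key step is to compute $s(H)$ for each graph $H$ on 4 vertices. A set $W$ of size 4 has exactly three perfect matchings $M_1,M_2,M_3$, and $E(K_4)=M_1\dot\cup M_2\dot\cup M_3$. A 2-switch on $W$ is an ordered pair $(M_i,M_j)$ with $i\ne j$, $M_i\subseteq E(H)$ and $M_j\cap E(H)=\emptyset$. Let $p$ be the number of matchings fully contained in $E(H)$ and $q$ the number fully disjoint from it. Then $s(H)=pq$, since any present matching and any absent matching are automatically distinct. I would then go through the cases:
\begin{itemize}
\item $H=2K_2$: $p=1$ and $q=2$, so $s=2$.
\item $H=C_4$: $p=2$ and $q=1$, so $s=2$.
\item $H=P_4$: the outer edges form a present matching, the middle edge together with the non-edge between the endpoints forms a mixed matching, and the remaining two non-edges form an absent matching. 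So $p=q=1$ and $s=1$.
\item Any other $H$: either it has no perfect matching among its edges, or its complement has none. The latter means $\overline{H}$ has $p=0$, and complementation swaps $p$ and $q$. This is the main point to check. A graph on 4 vertices containing a perfect matching whose complement also contains one must have exactly 2 matchings split between present and absent. The third matching can be full, empty, or mixed, giving $C_4$, $2K_2$ or $P_4$ respectively, since the union of two perfect matchings is $C_4$ and adding one edge of the third to $2K_2$ gives $P_4$.
\end{itemize}
So $s$ vanishes outside these three types, and summing gives \eqref{eq18}.

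For the split statement, I would observe that a split graph contains no induced $C_4$ or $2K_2$. An induced $2K_2$ would need each of its edges to use a clique vertex, but two clique vertices in different edges are adjacent, which is a contradiction. The case of $C_4$ follows by complementation, because the complement of a split graph is split. The main obstacle is only the careful case enumeration of 4-vertex graphs, which the $p,q$ matching argument reduces to the check above.
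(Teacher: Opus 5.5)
Your argument is correct and complete. The paper gives no proof of this statement; it imports it from \cite{pastine20252}. The only indication of an argument is the preceding remark that the four vertices on which a 2-switch acts induce a member of $\mathcal{Q}_G$, so that 2-switches can be counted according to their 4-set. Your proof makes this idea precise. You group 2-switches by the 4-set $W$ they act on, and note that on $W$ a 2-switch is an ordered pair (perfect matching fully in $E(H)$, perfect matching fully outside $E(H)$). This gives the local formula $s(H)=pq$, from which the values $2,2,1$ for $2K_2$, $C_4$, $P_4$ and $0$ for every other 4-vertex graph follow without enumerating all eleven graphs.

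One phrasing slip needs fixing. Graphs with $p,q\geq 1$ do not have ``exactly 2'' matchings that are fully present or fully absent: in $C_4$ and $2K_2$ all three are. Say instead that there is at least one present and one absent matching and the third is arbitrary. That is what your full/empty/mixed trichotomy actually uses.

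The split part is fine as you give it. Alternatively, it follows from the characterization of split graphs by forbidden induced $C_4$, $2K_2$, $C_5$ \cite{hammer1977split}, which the paper invokes later.
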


By \eqref{eq18}, $\deg(P_4)=1$ and $\deg(C_4)=\deg(2K_2)=2$, so
\eqref{eq18} can be rewritten as
\begin{equation}
	\label{deg.como.suma}
	\deg(G)=\sum_{H\in \mathcal{Q}_G}\deg(H).
\end{equation}

%
%

\begin{theorem}
	\label{|G^*|}
	If $G$ is an active graph, then $4 \leq |G| \leq 4\deg(G)$.
\end{theorem}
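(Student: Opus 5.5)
Since $G$ is active, every vertex $v$ takes part in some 2-switch $\tau_v$ on $G$. This gives both bounds at once.

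For the lower bound, a 2-switch acts on two vertex-disjoint edges $ab$ and $xy$, so it involves four distinct vertices. An active graph must have at least one vertex, which therefore takes part in some 2-switch, so such a 2-switch exists. Hence $|G|\ge 4$. (If the empty graph is excluded by convention, this is immediate; otherwise one notes that being active is meant for nonempty graphs.)

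For the upper bound, we double count. Each 2-switch on $G$ acts on exactly four vertices, namely the vertex set of a member of $\mathcal{Q}_G$. Consider the map sending each 2-switch $\tau$ to its set $V_\tau$ of four vertices. Because every vertex is active,
\[
V(G)=\bigcup_{\tau} V_\tau ,
\]
where $\tau$ ranges over all $\deg(G)$ distinct 2-switches on $G$. Therefore
\[
|G|\le \sum_\tau |V_\tau| = 4\deg(G).
\]
If one prefers to work through \Cref{degreeofG}, the argument reads as follows. Each $H\in\mathcal{Q}_G$ has four vertices and contributes $\deg(H)\ge 1$ to $\deg(G)$ by \eqref{deg.como.suma}. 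A vertex is active exactly when it lies in some $H\in\mathcal{Q}_G$, since a 2-switch uses precisely the vertices of an induced $P_4$, $C_4$ or $2K_2$, and conversely each such induced subgraph supports at least one 2-switch. So
\[
|G|\le 4|\mathcal{Q}_G|\le 4\sum_{H\in\mathcal{Q}_G}\deg(H)=4\deg(G).
\]

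The only point needing care is the equivalence "active vertex $\iff$ lies in a member of $\mathcal{Q}_G$". One direction holds because the four vertices of a 2-switch induce $P_4$, $C_4$ or $2K_2$, as stated before \Cref{degreeofG}. The other holds because each of these three graphs admits a 2-switch, and performing it inside $G$ only involves the four vertices and the induced adjacencies among them. No real obstacle is expected; the argument is a direct covering count.
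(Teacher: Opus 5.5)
Your proposal is correct and is essentially the paper's proof. The paper gets $|G|\ge 4$ from $\mathcal{Q}_G\neq\varnothing$ and bounds $|G|=\bigl|\bigcup_{H\in\mathcal{Q}_G}V(H)\bigr|\le 4|\mathcal{Q}_G|\le 4\sum_{H\in\mathcal{Q}_G}\deg(H)=4\deg(G)$ via \eqref{deg.como.suma}, exactly as in your second version; your first version covers $V(G)$ directly by the $\deg(G)$ four-vertex sets of the 2-switches, which is the same covering count without invoking \Cref{degreeofG}.
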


\begin{proof}
	Since $G$ is active, it is clear that $\mathcal{Q}_G \neq \varnothing$, as $\deg(G) \geq 1$. Thus, $|G| \geq 4$. On the other hand:
	\begin{equation*}
		|G| = \left|\bigcup_{H \in \mathcal{Q}_G} V(H)\right| \leq \sum_{H \in \mathcal{Q}_G} 4 \leq 4 \sum_{H \in \mathcal{Q}_G} \deg(H) = 4\deg(G),
	\end{equation*}
	the last equality by \eqref{deg.como.suma}.
\end{proof}



\subsection{Split graphs and factor graphs}

If $(S,K,I)$ is a split graph and $G$ is a graph disjoint from $S$, the
\emph{Tyshkevich composition} $(S,K,I)\circ G$ is the graph with vertex
set $V(S)\cup V(G)$ and edge set
$E(S)\cup E(G)\cup\{xy:x\in K,\ y\in V(G)\}$
(see \cite{tyshkevich2000decomposition}); we denote by $S^n$
the composition of $n$ disjoint copies of a prime split graph $S$. A graph $G$ is
\emph{decomposable} if $G=S\circ H$ for some graphs $S$ and $H$ with $|S|,|H|\geq 1$, and \emph{indecomposable} otherwise.

The \emph{factor graph} of a split graph $(S,K,I)$ is the loopless
multigraph $\Phi=\Phi(S,K,I)$ with $V(\Phi)=I$ having one edge between $u$
and $v$ for each induced $P_4$ of $S$ containing both $u$ and $v$
\cite{pastine2025simple}. The multiplicity of the edge $uv$ is denoted by
$\sigma_{S}(uv)$, or simply $\sigma_{uv}$. As shown in
\cite{pastine2025simple}, the multiset $E(\Phi)$ does not depend on the bipartition, so the notation $\Phi(S)$ is unambiguous when $S$ is active, and $\deg(S)=|\mathcal{Q}_S(P_4)|=|E(\Phi(S))|$. 

\begin{theorem}[\cite{pastine2025simple}]
	\label{S.primo.iff.Phi(S).conexo}
	If $S$ is an active split graph, then $S$ is prime if and only if $\Phi(S)$ is connected. 
%
\end{theorem}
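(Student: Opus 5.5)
We prove both directions by relating induced $P_4$'s of a split graph to comparability of neighbourhoods of independent vertices. Fix a bipartition $(K,I)$ of $S$. Every induced $P_4$ of a split graph has the form $i_1k_1k_2i_2$ with $i_1,i_2\in I$ and $k_1,k_2\in K$: its two middle vertices lie in $K$ and its endpoints in $I$. Such a path exists if and only if $N(i_1)\setminus N(i_2)\neq\varnothing$ and $N(i_2)\setminus N(i_1)\neq\varnothing$. Hence, for $u,v\in I$, we have $uv\in E(\Phi)$ if and only if $N(u)$ and $N(v)$ are incomparable under inclusion. This reformulation drives the whole argument.

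($\Rightarrow$, by contraposition.) Suppose $S=A\circ B$ with $|A|,|B|\ge1$. Then $A$ is split with bipartition $(K_A,I_A)$, where $K_A$ is complete to $V(B)$ and $I_A$ is anticomplete to $V(B)$, and $B$ is split as well, so we may take $K=K_A\cup K_B$ and $I=I_A\cup I_B$. For $u\in I_A$ and $v\in I_B$ we have $N(u)\subseteq K_A\subseteq N(v)$, so the neighbourhoods are comparable and $\Phi$ has no edge between $I_A$ and $I_B$. It remains to check that both parts are nonempty. If $I_A=\varnothing$, every vertex of $K_A$ is adjacent to all other vertices. A dominating vertex lies in no induced $P_4$, so it is inactive, contradicting activity. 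If $I_B=\varnothing$, then $B$ is a clique all of whose vertices are adjacent to all of $K$. Each such vertex could lie in a $P_4$ only as a middle vertex, but it has no non-neighbour in $I$ apart from those in $I_A$; a short check shows this is again impossible. Hence $\Phi$ is disconnected.

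($\Leftarrow$, by contraposition.) Suppose $\Phi$ is disconnected. First we show that the components are uniformly comparable. Let $C$ be a component and $d\notin C$. Then $N(d)$ is comparable with $N(c)$ for every $c\in C$. If $N(c')\subseteq N(d)\subseteq N(c)$ held for some $\Phi$-adjacent $c,c'\in C$, then $N(c')\subseteq N(c)$, contradicting adjacency. By connectivity of $C$, either $N(d)\supseteq N(c)$ for all $c\in C$, or $N(d)\subseteq N(c)$ for all $c\in C$. Comparing components in this way gives a total preorder on them, and it has no cycles by transitivity of inclusion. We therefore choose a minimal component $C_0$, so that every $d\in I\setminus C_0$ satisfies $N(d)\supseteq M:=\bigcup_{c\in C_0}N(c)$.

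Next we construct the decomposition. Let $A=S[C_0\cup M]$ with $K_A=M$ and $I_A=C_0$, and let $B=S-(C_0\cup M)$. We have $M$ complete to $I\setminus C_0$ by the choice of $C_0$, $M$ complete to $K\setminus M$ since $K$ is a clique, and $C_0$ anticomplete to $K\setminus M$ by definition of $M$. Hence $S=A\circ B$. Moreover $B\supseteq I\setminus C_0\neq\varnothing$, and $A\neq\varnothing$, so $S$ is decomposable and not prime.

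The main obstacle is the uniform-comparability step together with the existence of a minimal component. One must make sure that the relation between components is well defined and acyclic; ties between distinct components with equal neighbourhoods are harmless. The edge cases in ($\Rightarrow$) where $I_A$ or $I_B$ is empty also need care, and activity is used exactly there.
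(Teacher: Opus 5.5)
Your proof is correct. The paper contains no proof to compare it with: the statement is quoted from \cite{pastine2025simple}.

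Within the paper's own framework, the natural derivation would go through \Cref{indecomp.characterization}. That reduces the claim to showing that $A_4(S)$ is connected if and only if $\Phi(S)$ is. One half is immediate: every edge of $\Phi$ joins two vertices of a common induced $P_4$, and activity puts each vertex of $K$ on an induced $P_4$ whose ends lie in $I$. The other half needs that two induced $P_4$'s sharing a vertex of $K$ have their ends in the same component of $\Phi$. That is a local form of your comparability lemma for components.

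Your route avoids $A_4$ altogether. It rests on three ingredients: the criterion that $uv\in E(\Phi)$ if and only if $N_u$ and $N_v$ are incomparable, the total preorder on components, and the explicit factor $S[C_0\cup M]$ built from a minimal component. This makes the argument self-contained and constructive. It also shows that ``$\Phi$ disconnected $\Rightarrow$ decomposable'' holds for every split graph, with activity needed only in the other implication: a $P_4$ plus a universal vertex has connected $\Phi$ but is decomposable. The $A_4$ route instead embeds the result in a statement valid for all graphs. That is the form in which the paper uses indecomposability elsewhere (e.g.\ \Cref{C4.modulo.deg>=6}).

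Two small blemishes:
\begin{enumerate}
\item Your direction labels are swapped. The paragraph headed ($\Rightarrow$) proves that a decomposition disconnects $\Phi$, which is the contrapositive of ($\Leftarrow$), and vice versa. This is harmless, since both implications are proved.
\item In the case $I_B=\varnothing$, your phrase about non-neighbours is not the operative fact. What matters is that each vertex of $K_B$ has \emph{no neighbour} in $I=I_A$, whereas both middle vertices of an induced $i_1k_1k_2i_2$ have a neighbour in $I$.
\end{enumerate}
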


\begin{proposition}
	\label{|I|_|K|<=2deg(S)}
	If $(S,K,I)$ is a prime split graph, then 
	\[2 \leq |I|, |K| \leq \deg(S)+1.\]
%
\end{proposition}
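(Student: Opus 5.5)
Since $S$ is prime, it is active, and by \Cref{S.primo.iff.Phi(S).conexo} its factor graph $\Phi=\Phi(S,K,I)$ is connected. Moreover $|E(\Phi)|=\deg(S)\ge 1$. The plan is to bound $|I|$ through $\Phi$ and then transfer the bound to $K$ by complementation.

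\textbf{Bounds on $|I|$.} Every induced $P_4$ of the split graph $S$ has the form $k_1 i_1 \ldots$; more precisely, its two endpoints lie in $I$ and its two middle vertices lie in $K$. This holds because the middle vertices have degree 2 in the $P_4$, and two vertices of $I$ cannot be adjacent. Hence each $P_4$ contributes exactly one edge of $\Phi$, joining two distinct vertices of $I$. Since $\deg(S)\ge1$, $\Phi$ has at least one edge, so $|I|=|V(\Phi)|\ge2$. Because $\Phi$ is a connected multigraph on $|I|$ vertices, it has at least $|I|-1$ edges, which gives $|I|-1\le|E(\Phi)|=\deg(S)$, that is, $|I|\le\deg(S)+1$.

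\textbf{Bounds on $|K|$.} We pass to $\overline S$, which is split with bipartition $(I,K)$: $K$ is independent and $I$ is a clique. We need three facts. First, the complement of $P_4$ is $P_4$, so the induced $P_4$'s of $S$ and $\overline S$ are on the same 4-sets and $\deg(\overline S)=\deg(S)$ by \Cref{degreeofG}. Second, activity is preserved, since a vertex lies in an induced $P_4$ of $S$ iff it lies in one of $\overline S$. Third, primality is preserved: indecomposability is closed under complementation, because $\overline{(S_1,K_1,I_1)\circ G}=(\overline{S_1},I_1,K_1)\circ\overline G$. Applying the previous paragraph to $(\overline S,I,K)$ then gives $2\le|K|\le\deg(S)+1$.

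\textbf{Main obstacle.} The only step needing care is the complementation invariance of indecomposability. It must be checked directly from the definition of the Tyshkevich composition: in $\overline{S_1\circ G}$ there are no edges between $K_1$ and $V(G)$, and all edges between $I_1$ and $V(G)$ are present, so the roles of $K_1$ and $I_1$ swap. If one prefers to avoid this, an alternative is to argue directly on $K$. Here one would define an analogous multigraph on $K$ (joining the two middle vertices of each $P_4$) and show it is connected when $S$ is prime, which amounts to redoing the argument of \Cref{S.primo.iff.Phi(S).conexo}. The complementation route is shorter, so I would use it.
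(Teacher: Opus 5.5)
Your proof is correct and follows the paper's argument: you bound $|I|$ by the connectivity of $\Phi(S)$ (a connected multigraph with $\deg(S)$ edges has at most $\deg(S)+1$ vertices), and you transfer the bound to $K$ by passing to $(\overline{S},I,K)$. The differences are cosmetic. You obtain $|I|\ge 2$ from $\Phi$ having an edge, whereas the paper uses that each induced $P_4$ meets both $I$ and $K$ in two vertices. You also check the complementation invariance of indecomposability directly via $\overline{(S_1,K_1,I_1)\circ G}=(\overline{S_1},I_1,K_1)\circ\overline{G}$, where the paper cites it.
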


\begin{proof}
	Every member of $\mathcal{Q}_S(P_4)$ meets $I$ in exactly two vertices, and likewise $K$ (see \cite{pastine2025simple}). If $|I| \leq 1$ or $|K| \leq 1$, then $\mathcal{Q}_S(P_4) = \varnothing$, so $\deg(S)=0$ by \Cref{degreeofG} and no vertex of $S$ is active. This gives $2 \leq |I|,|K|$.
		
		By \Cref{S.primo.iff.Phi(S).conexo}, $\Phi(S)$ is connected. Since $V(\Phi(S))=I$, $|E(\Phi(S))|=\deg(S)$, and a connected multigraph of size $m$ has order at most $m+1$, we obtain $|I| \leq \deg(S)+1$. Now $(\overline{S},I,K)$ is a prime split graph with clique $I$ and independent set $K$ (see page 14 of \cite{tyshkevich2000decomposition} and page 5 of \cite{pastine20252}). Thus, the previous argument applied to it yields $|K| \leq \deg(\overline{S})+1 = \deg(S)+1$ (see \cite{pastine20252} for the last equality). 
\end{proof}

\begin{lemma}
	\label{5>degG=|{P_4}|.implica.G.split}
	If $G$ is a graph such that $\deg(G)=|\mathcal{Q}_G(P_4)| \leq 4$, then $G$ is split.
\end{lemma}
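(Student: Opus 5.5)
The plan is to use the Földes--Hammer characterization of split graphs: a graph is split if and only if it has no induced subgraph isomorphic to $C_4$, $2K_2$ or $C_5$. So it suffices to show that none of these three graphs is an induced subgraph of $G$.

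First, $C_4$ and $2K_2$ are ruled out by \Cref{degreeofG}. Equation \eqref{eq18} gives
\[
\deg(G)-|\mathcal{Q}_G(P_4)| = 2|\mathcal{Q}_G(2K_2)|+2|\mathcal{Q}_G(C_4)|.
\]
The hypothesis $\deg(G)=|\mathcal{Q}_G(P_4)|$ makes the left-hand side zero. Both terms on the right are nonnegative, so $\mathcal{Q}_G(2K_2)=\mathcal{Q}_G(C_4)=\varnothing$. This step uses only the equality in the hypothesis, not the bound $4$.

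Second, $C_5$ is ruled out by the bound $|\mathcal{Q}_G(P_4)|\le 4$. Suppose $C_5\prec G$ on vertices $v_1,\dots,v_5$. Deleting any one vertex of $C_5$ leaves an induced $P_4$. The five resulting vertex sets are distinct, and each induces a $P_4$ in $G$ because induced subgraphs of an induced subgraph are induced in $G$. Hence $|\mathcal{Q}_G(P_4)|\ge 5$, contradicting $|\mathcal{Q}_G(P_4)|\le 4$. So $C_5\not\prec G$, and by Földes--Hammer $G$ is split.

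There is no real obstacle. The only point needing care is that the five $P_4$'s in $C_5$ are genuinely distinct members of $\mathcal{Q}_G(P_4)$, which holds because they have different vertex sets. This also shows why the bound $4$ is sharp for the statement: $C_5$ itself has $\deg(C_5)=|\mathcal{Q}_{C_5}(P_4)|=5$ and is not split.
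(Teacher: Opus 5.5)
Your proposal is correct and follows essentially the same route as the paper: the Földes--Hammer characterization, with $C_4$ and $2K_2$ excluded by the equality $\deg(G)=|\mathcal{Q}_G(P_4)|$ in \eqref{eq18}, and $C_5$ excluded by the bound $4$. The only cosmetic difference is that the paper phrases the $C_5$ step as $4\geq\deg(G)\geq\deg(C_5)=5$ (monotonicity of the degree), whereas you count the five induced $P_4$'s of $C_5$ directly in $\mathcal{Q}_G(P_4)$.
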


\begin{proof}
	It is a well-known fact that a graph is split if and only if it contains no induced subgraphs isomorphic to $C_4$, $2K_2$, or $C_5$ (see \cite{hammer1977split}). The hypotheses imply that $G$ contains no induced subgraphs isomorphic to $C_4$ or $2K_2$. If $C_5 \prec G$, then $4 \geq \deg(G) \geq \deg(C_5)=5$.
	Thus, $G$ cannot contain induced subgraphs isomorphic to $C_5$ either.
\end{proof}



A split graph $S$ is indecomposable if and only if $\overline{S}$ is (see
\cite{tyshkevich2000decomposition}, page 14); a vertex is active in $S$ if
and only if it is active in $\overline{S}$ (see page 5 of \cite{pastine20252}),
and $\deg(\overline{S})=\deg(S)$ (see Corollary 6.2 of \cite{pastine20252});
and if $(K,I)$ is the bipartition of $S$, then $(I,K)$ is that of
$\overline{S}$, so $\omega(\overline{S})=\alpha(S)$ and
$\alpha(\overline{S})=\omega(S)$. Hence, to determine the prime split graphs
of degree $k$, it suffices to find those with $\alpha(S)\leq\omega(S)$, i.e.,
$|I|\leq|K|$, and add the complements of those with $\alpha(S)<\omega(S)$.
Those with $\alpha(S)=\omega(S)$ need not be self-complementary: for
instance, $R_{211}$ satisfies $\alpha=\omega=3$ and
$\overline{R_{211}}\not\approx R_{211}$ (see \Cref{split.primos.deg=4.fig.3}).

The following proposition lists some basic properties of $\sigma_{uv}$.

\begin{proposition}[\cite{pastine2025simple}]
	\label{prop.basicas.sigma_uv}
	Let $S$ be a split graph.
	\begin{enumerate}
		\item $\sigma_{uv}=(d_u-\eta_{uv})(d_v-\eta_{uv})$, where $\eta_{uv}=|N_u\cap N_v|$;
		
		\item $\sigma_{uv}=0$ and $d_v \leq d_u$ if and only if $N_v \subseteq N_u$; 
		
		\item $\sigma_{uv}=0$ and $d_u =d_v$ if and only if $N_u = N_v$;
		
		
		\item if $\sigma_{uv}=1$, then $d_u =d_v$ and $|N_u -N_v |=|N_v -N_u |=1$;
		
		\item if $d_u \geq d_v$ and $\sigma_{uv}=p$ is prime, then $|N_u -N_v |=p, |N_v -N_u |=1$ and $d_u-d_v=p-1$;
		
		
	\end{enumerate}
\end{proposition}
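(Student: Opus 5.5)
The plan is to first establish item~1 as an exact combinatorial description of the induced $P_4$'s through $u$ and $v$, and then obtain items~2--5 from it by elementary arithmetic. Throughout, fix a bipartition $(K,I)$ of $S$ and let $u,v\in I$ be distinct, since these are the endpoints of edges of $\Phi$. Because $I$ is independent, $N_u,N_v\subseteq K$.

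\emph{Item 1.} I will characterize the induced $P_4$'s of $S$ that contain both $u$ and $v$.

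Every induced $P_4$ of a split graph meets $I$ in exactly two vertices, its two endpoints, and meets $K$ in its two middle vertices. Indeed, the two middle vertices of a $P_4$ are adjacent and so cannot both lie in $I$. The two endpoints are non-adjacent, so they cannot both lie in $K$. The endpoints cannot be in $K$ with the middle in $I$, because then the two middle vertices would be adjacent vertices of $I$. Checking the remaining placements the same way forces the stated pattern.

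Consequently, an induced $P_4$ containing $u,v\in I$ has the form $u\,x\,y\,v$ with $x,y\in K$ and
\[
ux,\ vy\in E(S),\qquad uy,\ vx\notin E(S),
\]
that is, $x\in N_u-N_v$ and $y\in N_v-N_u$.

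Conversely, take any $x\in N_u-N_v$ and $y\in N_v-N_u$. Then $xy\in E(S)$ because $K$ is a clique, and $uv\notin E(S)$ because $I$ is independent. Hence $uxyv$ is an induced $P_4$. Distinct pairs $(x,y)$ give distinct induced subgraphs, since the vertex sets differ.

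Therefore
\[
\sigma_{uv}=|N_u-N_v|\cdot|N_v-N_u|=(d_u-\eta_{uv})(d_v-\eta_{uv}).
\]

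\emph{Items 2 and 3.} By item~1, $\sigma_{uv}=0$ if and only if $N_u\subseteq N_v$ or $N_v\subseteq N_u$.

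For item~2, assume $d_v\le d_u$. If $N_v\subseteq N_u$, then $\sigma_{uv}=0$ directly. If instead $N_u\subseteq N_v$, then $d_u\le d_v\le d_u$ forces $N_u=N_v$, so again $N_v\subseteq N_u$. Conversely, $N_v\subseteq N_u$ gives both $\sigma_{uv}=0$ and $d_v\le d_u$.

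Item~3 follows by applying item~2 with the roles of $u$ and $v$ in both orders.

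\emph{Items 4 and 5.} Both factors $d_u-\eta_{uv}$ and $d_v-\eta_{uv}$ are nonnegative integers.

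If $\sigma_{uv}=1$, both factors equal $1$. This gives $|N_u-N_v|=|N_v-N_u|=1$ and $d_u=d_v$.

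If $\sigma_{uv}=p$ is prime, the two factors are $1$ and $p$ in some order. The assumption $d_u\ge d_v$ gives $d_u-\eta_{uv}\ge d_v-\eta_{uv}$, so $|N_u-N_v|=p$ and $|N_v-N_u|=1$. Subtracting the two factors yields $d_u-d_v=p-1$.

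The only step with real content is the structural claim in item~1 that every induced $P_4$ in a split graph has its endpoints in $I$ and its middle vertices in $K$. I expect this case check to be the main point to verify carefully; everything after it is routine arithmetic.
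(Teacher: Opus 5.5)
Your proof is correct. The paper itself gives no proof of this proposition: it is imported from \cite{pastine2025simple}. The paper only uses its content elsewhere. It invokes the fact that every induced $P_4$ of a split graph has its ends in $I$ and its middle vertices in $K$ in the proofs of \Cref{|I|_|K|<=2deg(S)} and \Cref{S_activo_|I|=3}. It also uses the identity $\sigma_{uv}=|N_u-N_v|\,|N_v-N_u|$ to derive \Cref{S_activo_|I|=3}(2). Your bijection $(x,y)\mapsto uxyv$ from $(N_u-N_v)\times(N_v-N_u)$ onto the induced $P_4$'s containing $u$ and $v$ is the natural self-contained argument. Items 2--5 follow from it exactly by the arithmetic you give.

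Two small presentational points.

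\emph{The structural claim.} You can settle it without the partial case check. $V(P)\cap K$ must be a clique of $P_4$ and $V(P)\cap I$ an independent set of $P_4$. Both $\omega(P_4)$ and $\alpha(P_4)$ equal $2$, and the two parts sum to $4$, so each part has exactly two vertices. The only edge of $P_4$ whose complementary pair is a non-edge is the middle edge.

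\emph{Forward direction of item 2.} You should start from $\sigma_{uv}=0$, which gives $N_u\subseteq N_v$ or $N_v\subseteq N_u$. Instead you wrote ``if $N_v\subseteq N_u$, then $\sigma_{uv}=0$ directly'', which is the converse. The remaining case $N_u\subseteq N_v$ is then handled exactly as you do.
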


From now on, keep in mind that isolated and universal vertices of a graph $G$ are not active vertices of $G$ (see \cite{pastine20252}).

\begin{lemma}
	\label{S_activo_|I|=2}
	Let $(S,K,I)$ be an active split graph with $I=\{a,b\}$. Then: 
	\begin{enumerate}
		\item $|K|=d_a+d_b$ and $\sigma_{ab}=d_ad_b$;
		
		\item $S$ is isomorphic to the prime split graph with bipartition $([|K|],I)$ such that $N_a=[d_a]$ and $N_b=\{d_a+1,\ldots,|K|\}$, which we call $D_{d_b,d_a}$.
	\end{enumerate}
\end{lemma}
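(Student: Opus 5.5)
We work directly from the structure of a split graph with a two-element independent set, using activity to rule out degenerate neighborhoods. Let $(S,K,I)$ be active with $I=\{a,b\}$.

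\emph{Step 1: the neighborhoods of $a$ and $b$.} The neighborhoods $N_a,N_b\subseteq K$ cover $K$ and are disjoint; we derive this from the fact that every induced $P_4$ of $S$ meets $I$ in exactly two vertices, hence contains both $a$ and $b$.
\begin{itemize}
\item If some $x\in K$ lay in $N_a\cap N_b$, then $x$ would be adjacent to every vertex of $S$, since $K$ is a clique. So $x$ would be universal, hence not active.
\item If some $x\in K$ lay outside $N_a\cup N_b$, then $x$ would have no neighbor in $I$. Take an induced $P_4$ containing $x$, say through the 2-switch in which $x$ is active. It has the form $a\,k\,k'\,b$ with $k\in N_a\setminus N_b$, $k'\in N_b\setminus N_a$ (the only shape of a $P_4$ in a split graph using both vertices of $I$). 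Here $x$ would have to be one of $k,k'$, which contradicts $x\notin N_a\cup N_b$.
\end{itemize}
Hence $K=N_a\,\dot\cup\,N_b$ and $|K|=d_a+d_b$. Moreover $\eta_{ab}=|N_a\cap N_b|=0$, so \Cref{prop.basicas.sigma_uv}(1) gives $\sigma_{ab}=d_ad_b$.

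We also need $d_a,d_b\ge1$. If, say, $d_a=0$, then $a$ is isolated, and isolated vertices are not active; this contradicts activity.

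\emph{Step 2: the isomorphism.} Relabel $K$ so that $N_a=[d_a]$ and $N_b=\{d_a+1,\dots,|K|\}$. This is possible by Step 1, since the two neighborhoods partition $K$. The edge set of $S$ is then completely determined: $K$ is a clique, $a$ and $b$ are nonadjacent, and the edges between $K$ and $I$ are as prescribed. So $S\approx D_{d_b,d_a}$.

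\emph{Step 3: $D_{d_b,d_a}$ is prime.} Since $\sigma_{ab}=d_ad_b\ge1$, the factor graph $\Phi$ on $I=\{a,b\}$ has an edge and is therefore connected. By \Cref{S.primo.iff.Phi(S).conexo}, $S$ is prime, since it is active by hypothesis.

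It remains to confirm directly that $D_{d_b,d_a}$ is active. Every vertex lies on an induced path $a\,i\,j\,b$ with $i\in N_a$, $j\in N_b$, and such paths exist because $d_a,d_b\ge1$.

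The main obstacle is the coverage claim in Step 1. It relies on the fact, cited in the paper from \cite{pastine2025simple}, that induced $P_4$'s in a split graph meet $I$ in two vertices and have the form independent–clique–clique–independent. That fact pins down the exact shape $a\,k\,k'\,b$, and it is this shape that forces every active clique vertex to be adjacent to exactly one of $a,b$.
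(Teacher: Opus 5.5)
Your proof is correct and follows the paper's own route: $\eta_{ab}=0$ because a common neighbor of $a$ and $b$ would be universal, so $\sigma_{ab}=d_ad_b$ by \Cref{prop.basicas.sigma_uv}(1), and the isomorphism is read off from the partition $K=N_a\,\dot\cup\,N_b$. You also supply two details the paper leaves implicit here, namely that activity forces $K=N_a\cup N_b$ (the inner-vertex argument the paper only spells out in the proof of \Cref{S_activo_|I|=3}) and that $D_{d_b,d_a}$ is prime because $\sigma_{ab}\geq 1$ makes $\Phi$ connected, so \Cref{S.primo.iff.Phi(S).conexo} applies.
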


\begin{proof}
	\begin{enumerate}
		\item Since $S$ has no universal vertices, we have $\eta_{ab}=0$ and so $|K|=d_a+d_b$. Moreover, $\sigma_{ab}=d_ad_b$ by \Cref{prop.basicas.sigma_uv}.
		
		\item Straightforward. \qedhere
	\end{enumerate}
\end{proof}

\begin{lemma}
	\label{S_activo_|I|=3}
	Let $(S,K,I)$ be an active split graph with $I=\{a,b,c\}$. For $u\in I$, let $\pi_u$ be the number of vertices of $K$ whose only neighbor in $I$ is $u$, and let $\kappa_u$ be the number of vertices of $K$ whose neighbors in $I$ are exactly the two vertices of $I-\{u\}$. Then:
	\begin{enumerate}
		\item $|K|=\sum_{u\in I}(\pi_u+\kappa_u)$ and $d_u=\pi_u+\sum_{v\in I-\{u\}}\kappa_v$ for all $u\in I$;
		\item $\sigma_{uv}=(\pi_u+\kappa_v)(\pi_v+\kappa_u)$ for all distinct $u,v\in I$;
		\item $S$ is determined, up to isomorphism, by $(\pi_a,\pi_b,\pi_c,\kappa_a,\kappa_b,\kappa_c)$;
		\item if $\sigma_{ac}=0$, then, up to exchanging $a$ and $c$, we have $\pi_a=\kappa_c=0$, $\sigma_{ab}=\kappa_b(\pi_b+\kappa_a)$, $\sigma_{bc}=\pi_b(\pi_c+\kappa_b)$ and $|K|=\pi_b+\pi_c+\kappa_a+\kappa_b$.
	\end{enumerate}
\end{lemma}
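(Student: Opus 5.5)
The plan is to classify the vertices of $K$ by their neighborhoods in $I$. Activity rules out two types. A vertex of $K$ adjacent to all of $a,b,c$ is universal, and a vertex of $K$ with no neighbor in $I$ is adjacent only to $K$; in the latter case either it is universal, or it is dominated and cannot be active. For the dominated case I would argue as follows: a vertex $w\in K$ with $N(w)=K-\{w\}$ lies in no induced $P_4$ of $S$, because in a $P_4$ of a split graph the two clique vertices are the middle ones and each has a private neighbor in $I$. So every $w\in K$ has exactly one or two neighbors in $I$. This partitions $K$ into the classes counted by $\pi_u$ (exactly the neighbor $u$) and $\kappa_u$ (exactly the neighbors $I-\{u\}$), giving $|K|=\sum_u(\pi_u+\kappa_u)$. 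For $u\in I$, $N_u\subseteq K$ consists of the $\pi_u$ private vertices and the $\kappa_v$-vertices for $v\ne u$, which proves item 1.

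For item 2 I would apply \Cref{prop.basicas.sigma_uv}(1): $\sigma_{uv}=(d_u-\eta_{uv})(d_v-\eta_{uv})$. Here $N_u\cap N_v$ is exactly the class of vertices adjacent to both $u$ and $v$, namely the $\kappa_w$-vertices where $w$ is the third vertex, so $\eta_{uv}=\kappa_w$. Then $d_u-\eta_{uv}=\pi_u+\kappa_v$ and $d_v-\eta_{uv}=\pi_v+\kappa_u$.

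Item 3 is immediate. $K$ is a clique, $I$ is independent, and each vertex of $K$ has one of the six admissible neighborhood patterns in $I$; the numbers of vertices of each pattern are the six parameters. Any bijection that matches classes is therefore an isomorphism.

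For item 4, assume $\sigma_{ac}=(\pi_a+\kappa_c)(\pi_c+\kappa_a)=0$. Up to exchanging $a$ and $c$, take $\pi_a+\kappa_c=0$, so $\pi_a=\kappa_c=0$. Substituting into item 2 gives $\sigma_{ab}=(\pi_a+\kappa_b)(\pi_b+\kappa_a)=\kappa_b(\pi_b+\kappa_a)$ and $\sigma_{bc}=(\pi_b+\kappa_c)(\pi_c+\kappa_b)=\pi_b(\pi_c+\kappa_b)$. Likewise $|K|$ reduces to $\pi_b+\pi_c+\kappa_a+\kappa_b$.

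The only step needing genuine care is excluding the vertices of $K$ with no neighbor in $I$. The key fact used there is that every induced $P_4$ of a split graph has its two middle vertices in $K$, each with a private $I$-neighbor, which is the structural fact about $P_4$'s in split graphs from \cite{pastine2025simple} already quoted in the proof of \Cref{|I|_|K|<=2deg(S)}.
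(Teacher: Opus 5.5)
Your proof is correct and follows essentially the same route as the paper: you rule out $K$-vertices with no neighbor or with all three neighbors in $I$, using that an induced $P_4$ of a split graph has its ends in $I$ and its inner vertices in $K$. You then read off (1)--(4) from the resulting six-class partition and \Cref{prop.basicas.sigma_uv}(1), where your $\eta_{uv}=\kappa_w$ is just the paper's $|N_u-N_v|=\pi_u+\kappa_v$ rephrased (the ``universal'' alternative for a vertex with no $I$-neighbor is vacuous since $I\neq\varnothing$, but harmless).
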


\begin{proof}
	Since $S$ is active, every $x\in K$ is an inner vertex of an induced $P_4$ of $S$, whose ends lie in $I$ (see \cite{pastine2025simple}), so $N_x\cap I\neq\varnothing$; moreover, $x$ is not universal, so $N_x\cap I\neq I$. Hence the six sets counted by the $\pi_u$ and the $\kappa_u$ partition $K$, which gives (1), and $S$ is recovered, up to isomorphism, from their sizes, which gives (3). Since $|N_u-N_v|=\pi_u+\kappa_v$, (2) follows from \Cref{prop.basicas.sigma_uv}. Finally, if $\sigma_{ac}=0$, then by (2) we may assume, exchanging $a$ and $c$ if necessary, that $\pi_a+\kappa_c=0$, and (4) follows from (1) and (2).
\end{proof}

Following \cite{pastine2025simple}, we say that a split graph $(S,K,I)$ is \emph{$\delta$-homogeneous} if $\deg_S(v)=\delta$ for all $v\in I$. Recall that if $S$ is active, then $|K|=\omega(S)$, $|I|=\alpha(S)$, and $(K,I)$ is the unique bipartition for $S$. Throughout the next sections we use the following facts.

\begin{proposition}[\cite{pastine20252,pastine2025simple}]
	\label{hechos.previos}
	Let $(S,K,I)$ be a split graph and let $\Phi=\Phi(S,K,I)$.
	\begin{enumerate}
		\item If $G$ is a graph, then
		\[ \deg((S,K,I)\circ G)=\deg(S)+\deg(G), \]
		and $S\circ G$ is active if and only if $S$ and $G$ are active.
		
		\item If $K=\bigcup_{v\in I}N_v$, $|I|\geq 2$ and $\Phi$ is simple and connected, then $(S,K,I)$ is $\delta$-homogeneous.
		
		\item If $(S,K,I)$ is $\delta$-homogeneous and $\deg(S)\geq 1$, then:
		\begin{enumerate}
			\item $\sigma_{uv}=0$ if and only if $N_u=N_v$;
			\item $\Phi$ contains no three vertices $u,v,w$ such that $\sigma_{uv}\neq 0$ and $\sigma_{uw}=0=\sigma_{vw}$.
		\end{enumerate}
	\end{enumerate}
\end{proposition}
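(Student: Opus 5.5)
The plan is to prove the three items separately. Item 1 needs a structural look at 4-vertex induced subgraphs of a composition; items 2 and 3 are short consequences of \Cref{prop.basicas.sigma_uv}.

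\textbf{Item 1.} Put $H=(S,K,I)\circ G$. By \Cref{degreeofG} and \eqref{deg.como.suma}, $\deg(H)=\sum_{Q\in\mathcal{Q}_H}\deg(Q)$. A vertex is active exactly when it lies in some member of $\mathcal{Q}_H$, since the four vertices of any 2-switch induce such a member and every member carries a 2-switch. So both claims follow once we show that every $Q\in\mathcal{Q}_H$ has $V(Q)\subseteq V(S)$ or $V(Q)\subseteq V(G)$. Induced subgraphs inside $S$ or inside $G$ are the same in $H$ as in $S$ or $G$.

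Suppose $W=V(Q)$ is mixed, and write $A=W\cap V(G)$, $B=W\cap K$, $C=W\cap I$. In $H[W]$, $B$ is a clique complete to $A\cup B$, $C$ is independent with no neighbours in $A$, and $A\neq\varnothing$.
\begin{itemize}
\item If $C=\varnothing$, then $B\neq\varnothing$ because $W$ is mixed, and any vertex of $B$ is universal in $H[W]$. This is impossible, since $P_4$, $C_4$ and $2K_2$ have no universal vertex.
\item If $B=\varnothing$, then $C\neq\varnothing$ and the vertices of $C$ are isolated in $H[W]$. This is also impossible, since these three graphs have no isolated vertex.
\item Hence $A$, $B$, $C$ are all nonempty, and one of them has size $2$. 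Each of the three distributions is checked by hand.
\end{itemize}
For example, take $|A|=2$, $b\in B$, $c\in C$. Then $c$ has degree at most $1$ while $b$ has degree at least $2$. If $c$ has degree $0$ it is isolated. If $c$ has degree $1$, then $b$ has degree $3$ and is universal. The cases $|B|=2$ and $|C|=2$ are handled the same way: one always finds a universal or isolated vertex, or a triangle. This finite check is the only step with real content, and I expect it to be the main (if routine) obstacle.

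\textbf{Item 2.} Since $\Phi$ is simple, every nonzero multiplicity is $\sigma_{uv}=1$. By \Cref{prop.basicas.sigma_uv}(4), this gives $d_u=d_v$ for every edge $uv$ of $\Phi$. Since $\Phi$ is connected on $I$, propagating the equality along paths makes $d_v$ constant on $I$. The hypothesis $|I|\ge2$ ensures that this is a genuine condition. The hypothesis $K=\bigcup_{v\in I}N_v$ only guarantees that the bipartition is the natural one, so that the degrees are the degrees into $K$ used in \Cref{prop.basicas.sigma_uv}.

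\textbf{Item 3.} For (a), if $S$ is $\delta$-homogeneous then $d_u=d_v$ for all $u,v\in I$, so \Cref{prop.basicas.sigma_uv}(3) gives $\sigma_{uv}=0\iff N_u=N_v$ directly. For (b), suppose $\sigma_{uw}=0=\sigma_{vw}$. By (a), $N_u=N_w=N_v$, and then (a) again gives $\sigma_{uv}=0$, contradicting $\sigma_{uv}\ne0$. The assumption $\deg(S)\ge1$ is used only so that \Cref{prop.basicas.sigma_uv} applies in a nontrivial setting.
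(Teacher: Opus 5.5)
Your proof is correct. Note, though, that the paper gives no proof of this proposition: it imports all three items from \cite{pastine20252,pastine2025simple}. So your argument is a self-contained replacement for a citation rather than a variant of an argument in the paper.

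For item 1 you reduce everything to $\mathcal{Q}_{S\circ G}=\mathcal{Q}_S\,\dot\cup\,\mathcal{Q}_G$. This is exactly the fact underlying the Barrus--West identity $A_4(G)=\dot{\bigcup}_{i}A_4(G_i)$, which the paper quotes separately as \Cref{indecomp.characterization}. Your case split is complete, including the two cases you only sketch. When $|B|=2$, the vertex of $A$ and the two vertices of $B$ form a triangle. When $|C|=2$, either some vertex of $C$ is isolated or the vertex of $B$ is universal.

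Items 2 and 3 follow from \Cref{prop.basicas.sigma_uv}(3),(4) exactly as you say. Your direct route also makes visible that several hypotheses are never used: $K=\bigcup_{v\in I}N_v$ and $|I|\geq 2$ in item 2, and $\deg(S)\geq 1$ in item 3. Your stated reason for needing the first one is unnecessary. Since $I$ is independent, every $v\in I$ has $N_v\subseteq K$ and $d_v=|N_v|$, whatever the bipartition. Likewise, in a $\delta$-homogeneous graph, $\sigma_{uv}=(\delta-\eta_{uv})^2$ vanishes iff $N_u=N_v$, whether or not $\deg(S)\geq 1$.

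The citation buys brevity. Your version buys independence from the external sources and shows that items 2 and 3 hold under weaker assumptions than stated.
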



\subsection{Modules and induced 4-cycles}\label{sec_tools_C4}

A set $M\subseteq V(G)$ is a \emph{module} of $G$ if every vertex of
$V(G)- M$ is adjacent either to all vertices of $M$ or to none of
them (see \cite{gallai1967}). If $M$ is a module of $G$ and $W\subseteq V(G)$, then $M\cap W$ is a
module of $G[W]$.

\begin{lemma}
	\label{modulo.reemplazo}
	Let $M$ be a module of $G$, let $H\prec G$ and put $W=V(H)\cap M$. If
	$A\subseteq M$ and $\varphi\colon W\to A$ is an isomorphism from
	$G[W]$ onto $G[A]$, then $G\bigl[(V(H)-W)\cup A\bigr]\approx H$.
\end{lemma}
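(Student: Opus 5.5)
We write $V=V(H)$ and build the explicit map $\psi\colon (V-W)\cup A\to V$ given by $\psi=\mathrm{id}$ on $V-W$ and $\psi=\varphi^{-1}$ on $A$. Then we check that $\psi$ is an isomorphism from $G[(V-W)\cup A]$ onto $G[V]=H$. Two preliminary points come first. The sets $V-W$ and $A$ are disjoint, since $V-W$ avoids $M$ while $A\subseteq M$. Also $|A|=|W|$, because $\varphi$ is a bijection $W\to A$. Hence $\psi$ is a well-defined bijection onto $(V-W)\cup W=V$.

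It remains to show that $\psi$ preserves adjacency and non-adjacency. We split the pairs $\{x,y\}$ of distinct vertices of $(V-W)\cup A$ into three cases.
\begin{enumerate}
\item If $x,y\in V-W$, then $\psi$ fixes both, so there is nothing to prove.
\item If $x,y\in A$, then $xy\in E(G)$ if and only if $\varphi^{-1}(x)\varphi^{-1}(y)\in E(G)$. This holds because $\varphi$ is an isomorphism from $G[W]$ onto $G[A]$.
\item If $x\in V-W$ and $y\in A$, this is the only case where the module hypothesis enters. Since $x\in V$ but $x\notin W=V\cap M$, we have $x\notin M$. By the definition of a module, $x$ is adjacent either to all of $M$ or to none of it. Both $y$ and $\varphi^{-1}(y)$ lie in $M$, because $A\subseteq M$ and $W\subseteq M$. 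Therefore $xy\in E(G)$ if and only if $x\varphi^{-1}(y)\in E(G)$, which is precisely the statement that $\psi$ preserves this pair.
\end{enumerate}
Combining the three cases, $\psi$ is an isomorphism, so $G[(V(H)-W)\cup A]\approx G[V(H)]=H$.

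The only step with any content is the third case, namely noting that vertices of $V(H)-W$ lie outside $M$. The rest is bookkeeping, and the degenerate case $W=\varnothing$ is trivial.
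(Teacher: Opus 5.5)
Your proof is correct and follows essentially the same route as the paper, which extends $\varphi$ by the identity on $V(H)-W$ (the inverse of your $\psi$) and checks the same three types of pairs, using the module property exactly on the mixed pairs. Your explicit check that $V(H)-W$ and $A$ are disjoint, so the map is a bijection, is a detail the paper leaves implicit.
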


\begin{proof}
	Extend $\varphi$ by the identity on $V(H)-W$. Adjacencies inside
	$V(H)-W$ are unchanged and those inside $W$ are preserved by
	$\varphi$. Since $V(H)-W\subseteq V(G)-M$ and $M$ is a module, every
	$v\in V(H)-W$ is adjacent to all or to none of the vertices of $M$;
	hence $uv\in E(G)$ if and only if $\varphi(u)v\in E(G)$, for all
	$u\in W$.
\end{proof}

Let $A_4(G)$ be the graph on $V(G)$ in which two vertices are adjacent if
and only if they lie in a common member of $\mathcal{Q}_G$, that is, if some
2-switch on $G$ involves both of them; $A_4(G)$ is the $2$-section of the
$A_4$-structure of $G$ introduced in \cite{barrus.west.A4}.

\begin{theorem}[\cite{barrus.west.A4}; see also \cite{pastine2025simple}]
	\label{indecomp.characterization}
	If $G_n\circ\cdots\circ G_1$ is the Tyshkevich decomposition of $G$,
	then $A_4(G)=\dot{\bigcup}_{i=1}^{n}A_4(G_i)$. In particular, $G$ is
	indecomposable if and only if $A_4(G)$ is connected.
\end{theorem}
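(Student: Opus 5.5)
The plan is to first prove the decomposition identity $A_4(G)=\dot{\bigcup}_{i}A_4(G_i)$ by showing that every member of $\mathcal{Q}_G$ lies entirely inside a single factor $G_i$. Since the Tyshkevich composition is associative, $G=G_n\circ H$ with $H=G_{n-1}\circ\cdots\circ G_1$. By induction on $n$ it therefore suffices to treat a single composition $(S,K,I)\circ H$ and show that no $Q\in\mathcal{Q}_{S\circ H}$ meets both $V(S)$ and $V(H)$. Once this holds, $\mathcal{Q}_G=\dot\bigcup_i\mathcal{Q}_{G_i}$, because induced subgraphs inside $V(G_i)$ are the same in $G$ and in $G_i$. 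The edges of $A_4(G)$ are then exactly the edges of the $A_4(G_i)$, and these live on the disjoint vertex sets $V(G_i)$.

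For the key step I will use that $V(H)$ is a module of $S\circ H$: $K$ is complete to $V(H)$ and $I$ is anticomplete to it. Suppose $Q\in\mathcal{Q}_{S\circ H}$ meets both sides and put $W=V(Q)\cap V(H)$, so $1\le|W|\le3$. Then $W$ is a module of $Q$, every vertex of $V(Q)-W$ adjacent to $W$ lies in $K$ (a clique), and every vertex not adjacent to $W$ lies in $I$ (an independent set). I will check the few cases.
\begin{itemize}
\item If $|W|=3$, the remaining vertex is universal or isolated in $Q$. This is impossible, since $P_4$, $C_4$ and $2K_2$ have neither.
\item If $|W|=1$, say $W=\{w\}$, the neighbours of $w$ in $Q$ must form a clique and its non-neighbours an independent set. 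In $P_4=abcd$: for $w=a$, the non-neighbours $c,d$ are adjacent. For $w=b$, the neighbours $a,c$ are non-adjacent. The other two choices are symmetric. In $C_4$ the two neighbours of $w$ are non-adjacent, and in $2K_2$ the two non-neighbours of $w$ are adjacent.
\item If $|W|=2$, then $W$ must be a nontrivial module of $Q$. $P_4$ has none. In $C_4$ the only such modules are the opposite pairs, whose complement is a non-adjacent pair that would have to lie in $K$. In $2K_2$ they are the edges, whose complement is an adjacent pair that would have to lie in $I$.
\end{itemize}
All cases give contradictions, which proves the identity.

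The forward half of the ``in particular'' follows at once. If $G=S\circ H$ with $|S|,|H|\ge1$, then no edge of $A_4(G)$ joins $V(S)$ to $V(H)$, so $A_4(G)$ is disconnected.

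The converse, that an indecomposable $G$ has connected $A_4(G)$, is the step I expect to be the main obstacle. The identity alone is circular here, since for $n=1$ it says nothing. I would argue by contradiction. Let $C$ be a connected component of $A_4(G)$ with $C\ne V(G)$. I would then try to show that $V(G)-C$ splits into a clique $K'$ complete to the other side and an independent set $I'$ anticomplete to it, so that $G=(G[K'\cup I'],K',I')\circ G[\,\cdot\,]$ is decomposable.

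The tool for this is that any four vertices meeting two different components cannot induce $P_4$, $C_4$ or $2K_2$. From this I would derive, for $x\notin C$, that $x$ is either complete or anticomplete to $C$. Among vertices outside $C$, the complete ones must be pairwise adjacent and the anticomplete ones pairwise non-adjacent, since otherwise one finds a forbidden quadruple with an edge or non-edge inside $C$. Some care is needed when $C$ is a single inactive vertex; there I would treat separately the case in which $G$ has a universal or isolated vertex, which splits off as a $K_1$ factor. If this direct approach becomes unwieldy, I would fall back on Barrus and West's description of the $A_4$-structure components as the canonical factors, as the citation indicates.
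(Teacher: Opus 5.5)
Your proof of the identity is correct. $V(H)$ is a module of $(S,K,I)\circ H$, and your case analysis on $|W|$ shows that no member of $\mathcal{Q}_G$ meets two factors. This gives $A_4(G)=\dot\bigcup_i A_4(G_i)$ and the implication ``$A_4(G)$ connected $\Rightarrow$ $G$ indecomposable''. The paper gives no proof of this theorem and simply cites Barrus--West.

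\textbf{The gap} is the converse, indecomposable $\Rightarrow$ $A_4(G)$ connected. This is the only nontrivial part of the ``in particular'', and it is exactly the direction the paper uses in \Cref{C4.modulo.deg>=6} and \Cref{G(d).grado.menor.5}. Your sketch rests on a false claim: a vertex outside an arbitrary component $C$ need not be complete or anticomplete to $C$.

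Take the bull $P_4\circ K_1$: an induced $P_4$ $abcd$ plus a vertex $x$ adjacent exactly to $b$ and $c$. Its only member of $\mathcal{Q}$ is $\{a,b,c,d\}$, so this set is a component of $A_4$. Yet $x$ sees $b,c$ and misses $a,d$. In general, a vertex outside the component of some $Q\in\mathcal{Q}_G$ can meet $V(Q)$ in $\varnothing$, in $V(Q)$, or, when $Q\approx P_4$, in exactly its two middle vertices. The third case is what happens when $C$ is the vertex set of an outer factor $G_i$ and the vertex lies in an inner factor, where it sees exactly $K_i$. So only an ``innermost'' component can serve as your $C$, and you give no argument that such a component exists or how to find it.

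The singleton case is also not covered by your remark about universal and isolated vertices. The vertex $x$ of the bull is inactive but neither universal nor isolated. Worse, consider $P_4\circ K_1\circ P_4$ with the middle vertex $x$ in the clique of its factor. Then $\{x\}$ is a component and trivially a module. But $N(x)$ contains the whole inner $P_4$, so $V(G)-\{x\}$ does not split into a clique complete to $x$ and an independent set anticomplete to it. Your non-edge trick, which shows the complete vertices are pairwise adjacent, needs $|C|\geq 2$, so it cannot tell you which singleton, if any, may be split off.

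To close the gap you need a further structural step. For example, analyse how two components see each other so as to single out a component that can be split off, and give a separate treatment of singleton components. Otherwise you must rely, as the paper does, on the theorem of Barrus and West.
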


Note that \eqref{deg.como.suma} gives
$\sum_{F\in\mathcal{F}}\deg(F)\leq\deg(G)$ for every
$\mathcal{F}\subseteq\mathcal{Q}_G$, with equality if and only if
$\mathcal{F}=\mathcal{Q}_G$. If $C\in\mathcal{Q}_G(C_4)$ and $v\in V(G)-V(C)$, define
\[
\mathcal{H}_v=\bigl\{H\in \mathcal{Q}_G:\ V(H)\subseteq\{v\}\cup V(C)
\ \text{ and }\ V(H)\neq V(C)\bigr\}.
\]

Let $M\subseteq V(G)$. A vertex $v\in V(G)-M$ \emph{distinguishes} $M$ if $\varnothing\neq N_G(v)\cap M\neq M$; thus $M$ is a module of $G$ if and only if no vertex distinguishes it.

\begin{lemma}
	\label{conteo.local.C4}
	Let $C=c_0c_1c_2c_3c_0$ be an induced $C_4$ in $G$, with subscripts
	in $\mathbb{Z}_4$, and let $v\in V(G)-V(C)$. Then
	\[
	\sum_{H\in\mathcal{H}_v}\deg(H)=
	\begin{cases}
		0, & \text{if } v \text{ does not distinguish } V(C);\\[2pt]
		4, & \text{if } N_G(v)\cap V(C)=\{c_i,c_{i+2}\}
		\text{ for some } i\in\mathbb{Z}_4;\\[2pt]
		2, & \text{otherwise.}
	\end{cases}
	\]
\end{lemma}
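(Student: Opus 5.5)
Since $v$ is the only vertex outside $V(C)$, each $H\in\mathcal{H}_v$ is an induced subgraph on $\{v\}\cup(V(C)-\{c_j\})$ for a unique $j\in\mathbb{Z}_4$. So the plan is a direct case analysis on $N=N_G(v)\cap V(C)$, up to the dihedral symmetry of $C$. For each of the four triples $T_j=V(C)-\{c_j\}$ we determine whether $G[T_j\cup\{v\}]$ is a $P_4$, $C_4$ or $2K_2$, and add the corresponding degrees (1, 2, 2). Note that each $G[T_j]$ is an induced $P_3$: writing $T_j$ as $c_{j+1}c_{j+2}c_{j+3}$, the middle vertex is $c_{j+2}$, the vertex opposite $c_j$.

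First we treat the non-distinguishing cases $N=\varnothing$ and $N=V(C)$. If $N=\varnothing$, then $G[T_j\cup\{v\}]$ is $P_3$ plus an isolated vertex. If $N=V(C)$, it is $P_3$ plus a dominating vertex, which contains a triangle. In neither case is it a member of $\mathcal{Q}_G$, so the sum is $0$.

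Next we list, for each remaining $N$ and each $j$, the adjacency of $v$ to the path $c_{j+1}c_{j+2}c_{j+3}$. The graph on four vertices is
\begin{itemize}
\item a $P_4$ exactly when $v$ is adjacent to precisely one endpoint of the path;
\item a $C_4$ exactly when $v$ is adjacent to both endpoints and not to the middle vertex;
\item never a $2K_2$, since $G[T_j]$ is connected and $v$ meets it in at most one $P_3$ component.
\end{itemize}
With this rule the computation is mechanical.
\begin{itemize}
\item $|N|=1$, say $N=\{c_0\}$. $T_1$ and $T_3$ give $P_4$'s, while $T_0$ and $T_2$ give nothing, for a total of $2$.
\item $N=\{c_0,c_2\}$. $T_1$ and $T_3$ give $C_4$'s and the other triples give nothing, for a total of $4$.
\item $N=\{c_0,c_1\}$ (adjacent vertices). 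Exactly two triples produce a $P_4$, for a total of $2$.
\item $|N|=3$, say $N=V(C)-\{c_0\}$. This follows by complementation: $\overline{C}$ is a $2K_2$, but it is simpler to check directly that exactly two triples give a $P_4$, for a total of $2$.
\end{itemize}

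The only real obstacle is bookkeeping: making sure no case is missed up to symmetry, and confirming that no triple yields $2K_2$ or a $C_4$ in the adjacent-pair and three-neighbor cases. The endpoint/middle criterion above settles both points uniformly.
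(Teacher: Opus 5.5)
The overall plan (enumerate the four sets $\{v\}\cup T_j$, classify each by how $v$ meets the induced path $G[T_j]$, and run through $N$ up to the symmetry of $C$) is exactly the paper's, and your $C_4$ criterion and your exclusion of $2K_2$ are fine. However, your $P_4$ criterion as stated is false. If $v$ is adjacent to exactly one endpoint \emph{and} to the middle vertex, then $G[T_j\cup\{v\}]$ is a paw, not a $P_4$. The correct rule, which is the paper's, has two parts. First, $G[T_j\cup\{v\}]\in\mathcal{Q}_G$ if and only if $v$ is not adjacent to the middle vertex and is adjacent to at least one endpoint. Second, in that case its degree equals the number of endpoints adjacent to $v$. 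The remaining configurations give $P_3\,\dot\cup\,K_1$, the claw, the paw, or $K_4$ minus an edge. Applied literally, your rule gives four $P_4$'s when $N=\{c_0,c_1\}$, because the two triples with middle vertex $c_0$ or $c_1$ would be counted although they are paws. So the value $2$ you report there is correct, but it does not follow from the rule you wrote.

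The case $N=V(C)-\{c_0\}$ is wrong as argued: it contains no $P_4$ at all. Here $T_0$ gives $K_4$ minus an edge. $T_1$ and $T_3$ give paws, since in each $v$ sees the middle vertex and one endpoint. $T_2=\{c_3,c_0,c_1\}$ has middle vertex $c_0\notin N$ and both endpoints in $N$, so it gives the induced $C_4$ $vc_1c_0c_3v$. Thus $\mathcal{H}_v$ consists of a single $C_4$, and your closing claim that no triple yields a $C_4$ in the three-neighbor case is false. Your total of $2$ is right only because two paws miscounted as $P_4$'s happen to weigh the same as the missed $C_4$. With the corrected criterion, all four cases go through exactly as in the paper.
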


\begin{proof}
	The $4$-subsets of $\{v\}\cup V(C)$ other than $V(C)$ are exactly the
	sets $W_i=\{v,c_{i-1},c_i,c_{i+1}\}$ with $i\in\mathbb{Z}_4$, so
	$\mathcal{H}_v=\{G[W_i]:i\in\mathbb{Z}_4\}\cap \mathcal{Q}_G$. Put
		$N=N_G(v)\cap V(C)$. Since $c_{i-1}c_i,c_ic_{i+1}\in E(G)$ and
	$c_{i-1}c_{i+1}\notin E(G)$, checking the eight possibilities for
	$N\cap\{c_{i-1},c_i,c_{i+1}\}$ gives
	\[
	G[W_i]\in \mathcal{Q}_G\iff c_i\notin N\ \text{ and }\
	N\cap\{c_{i-1},c_{i+1}\}\neq\varnothing,
	\]
	and in that case $\deg(G[W_i])=|N\cap\{c_{i-1},c_{i+1}\}|$, since
	$G[W_i]\approx P_4$ when this number is $1$ and $G[W_i]\approx C_4$
	when it is $2$.
	
	If $v$ does not distinguish $V(C)$, then $N\in\{\varnothing,V(C)\}$,
	no $W_i$ satisfies the condition, and the sum is $0$. Otherwise,
	rotating $C$ we may assume $N\in\{\{c_0\},\{c_0,c_1\},\{c_0,c_2\},
	\{c_0,c_1,c_2\}\}$. If $N=\{c_0,c_2\}$, then
	$\mathcal{H}_v=\{G[W_1],G[W_3]\}$ with both graphs isomorphic to
	$C_4$, so the sum is $4$. In each of the other three cases the sum is
	$2$: for $N=\{c_0\}$ we get $\mathcal{H}_v=\{G[W_1],G[W_3]\}$ with both
	graphs isomorphic to $P_4$; for $N=\{c_0,c_1\}$ we get
	$\mathcal{H}_v=\{G[W_2],G[W_3]\}$, again two copies of $P_4$; and for
	$N=\{c_0,c_1,c_2\}$ we get $\mathcal{H}_v=\{G[W_3]\}$ with
	$G[W_3]\approx C_4$.
\end{proof}

\begin{theorem}
	\label{C4.distinguidores}
	Let $C\prec G$ with $C\approx C_4$. Then
	\begin{equation}
		\label{conteo.global.C4}
		\deg(G)\ \geq\ 2+\sum_{v\in V(G)-V(C)}\ \sum_{H\in\mathcal{H}_v}\deg(H),
	\end{equation}
	with equality if and only if
	$\mathcal{Q}_G=\{C\}\cup\bigcup_{v\in V(G)-V(C)}\mathcal{H}_v$. Consequently,
	at most $\lfloor(\deg(G)-2)/2\rfloor$ vertices distinguish $V(C)$. In
	particular, $V(C)$ is a module of $G$ if $\deg(G)\leq 3$; and if
	$\deg(G)\leq 5$, then at most one vertex $v$ distinguishes $V(C)$,
	and $N_G(v)\cap V(C)$ is not a pair of opposite vertices of $C$.
\end{theorem}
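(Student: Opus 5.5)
The plan is to show that the families $\{C\}$ and $\mathcal{H}_v$, for $v\in V(G)-V(C)$, are pairwise disjoint subsets of $\mathcal{Q}_G$. The inequality \eqref{conteo.global.C4} then follows from \eqref{deg.como.suma}, and the consequences follow from \Cref{conteo.local.C4}. First, $C\in\mathcal{Q}_G(C_4)$ and $\deg(C)=2$. Each $\mathcal{H}_v$ is by definition contained in $\mathcal{Q}_G$ and excludes $C$. For disjointness of $\mathcal{H}_v$ and $\mathcal{H}_w$ with $v\neq w$, I use vertex sets. A member $H\in\mathcal{H}_v$ has $V(H)\subseteq\{v\}\cup V(C)$, $|V(H)|=4$ and $V(H)\neq V(C)$, so $v\in V(H)$. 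If $H$ also belonged to $\mathcal{H}_w$, we would have $v\in\{w\}\cup V(C)$, which is impossible since $v\neq w$ and $v\notin V(C)$. Hence $\mathcal{F}=\{C\}\cup\bigcup_v\mathcal{H}_v$ is a disjoint union inside $\mathcal{Q}_G$. The remark after \Cref{indecomp.characterization} then gives $\deg(G)\ge\sum_{F\in\mathcal{F}}\deg(F)=2+\sum_v\sum_{H\in\mathcal{H}_v}\deg(H)$, with equality if and only if $\mathcal{F}=\mathcal{Q}_G$.

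For the consequences, \Cref{conteo.local.C4} says the inner sum is $0$ when $v$ does not distinguish $V(C)$. When $v$ does distinguish $V(C)$, the sum is at least $2$, and it equals $4$ exactly when $N_G(v)\cap V(C)$ is a pair of opposite vertices. Let $t$ be the number of distinguishing vertices. Then $\deg(G)\ge 2+2t$, so $t\le\lfloor(\deg(G)-2)/2\rfloor$.

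The special cases follow directly. If $\deg(G)\le3$, then $t\le0$, so $V(C)$ is a module. If $\deg(G)\le5$, then $t\le1$. If the unique distinguishing vertex saw an opposite pair, we would get $\deg(G)\ge 2+4=6$, a contradiction.

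There is no real obstacle here. The only point needing care is the disjointness argument, which rests on every member of $\mathcal{H}_v$ containing $v$; this holds because any 4-subset of the 5-set $\{v\}\cup V(C)$ other than $V(C)$ contains $v$.
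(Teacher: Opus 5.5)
Your proof is correct and follows essentially the same route as the paper. Both show that the families $\mathcal{H}_v$ are pairwise disjoint and avoid $C$ (each member contains $v$), then use \eqref{deg.como.suma} to get the inequality and its equality case, and finally apply \Cref{conteo.local.C4} to obtain $\deg(G)\geq 2+2t$ and to exclude an opposite pair when $\deg(G)\leq 5$.
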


\begin{proof}
	Every member of $\mathcal{H}_v$ contains $v$ and exactly three
	vertices of $C$; hence the families $\mathcal{H}_v$ are pairwise
	disjoint and none of them contains $C$. Since $\deg(C)=2$,
	\eqref{conteo.global.C4} and its equality case follow from
	\eqref{deg.como.suma}.
	
	By \Cref{conteo.local.C4}, a vertex contributes $0$ to the sum in
	\eqref{conteo.global.C4} if it does not distinguish $V(C)$, and at
	least $2$ if it does. If $t$ vertices distinguish $V(C)$, then
	\eqref{conteo.global.C4} gives $\deg(G)\geq 2+2t$, so
	$t\leq\lfloor(\deg(G)-2)/2\rfloor$. For $\deg(G)\leq 3$ this yields
	$t=0$, that is, $V(C)$ is a module; for $\deg(G)\leq 5$ it yields
	$t\leq 1$. Finally, if $\deg(G)\leq 5$ and a vertex $v$ had
	$N_G(v)\cap V(C)$ equal to a pair of opposite vertices of $C$, then
	its contribution would be $4$ by \Cref{conteo.local.C4}, whence
	$\deg(G)\geq 2+4=6$, a contradiction.
\end{proof}

\begin{lemma}
	\label{C4.modulo.deg>=6}
	Let $G$ be a prime graph and let $C\prec G$ with $C\approx C_4$. If
	$V(C)$ is a module of $G$ and $G\neq C$, then $\deg(G)\geq 6$.
\end{lemma}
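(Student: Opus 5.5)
Since $V(C)$ is a module and $G\neq C$, there is a vertex $w\notin V(C)$. By \Cref{indecomp.characterization} the graph $A_4(G)$ is connected, so some member $H\in\mathcal{Q}_G$ meets both $V(C)$ and $V(G)-V(C)$. Put $W=V(H)\cap V(C)$, with $1\le|W|\le 3$. The plan is to use \Cref{modulo.reemplazo} to move $H$ around inside the module $V(C)$. This will produce several distinct members of $\mathcal{Q}_G$ other than $C$, each contributing at least $1$ to \eqref{deg.como.suma}. Together with $\deg(C)=2$, their contributions should add up to at least $6$.

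The case analysis is on $|W|$, using that $C[W']\approx C[W]$ for many subsets $W'\subseteq V(C)$.
\begin{enumerate}
\item If $|W|=1$, then every single vertex of $C$ induces an isomorphic graph, so replacing the vertex of $W$ by each of the four vertices of $C$ yields four distinct members of $\mathcal{Q}_G$. This gives $\deg(G)\ge 2+4=6$.
\item If $|W|=3$, then $C[W]\approx P_3$, and there are four such triples in $C$. Hence we again get four distinct members, and $\deg(G)\ge 6$.
\item If $|W|=2$, then $W$ is either an edge of $C$ or a non-edge of $C$. In the first case it can be moved to any of the four edges of $C$. In the second case it can be moved only to one of the two diagonals, which yields just two members.
\end{enumerate}
Distinctness in each case holds because the sets $(V(H)-W)\cup A$ differ as $A$ varies.

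The main obstacle is the subcase of a non-edge $W$. There $|V(H)-W|=2$, and these two outside vertices are each complete or anticomplete to $V(C)$. I would check directly which graphs in $\{P_4,C_4,2K_2\}$ can contain two non-adjacent vertices forming a module of $H$.
\begin{itemize}
\item In $P_4=abcd$, the pair $\{a,d\}$ is not a module, since $b$ distinguishes it, and neither is $\{a,c\}$.
\item In $2K_2$, no non-adjacent pair is a module.
\item In $C_4$, the diagonals are modules. So $H\approx C_4$, with $W$ one diagonal of $C$ and the other pair $\{x,y\}$ complete to $V(C)$.
\end{itemize}
Moving $W$ to the other diagonal gives a second induced $C_4$. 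The two members coming from $H$ then contribute $2+2=4$, which together with $C$ gives $\deg(G)\ge 6$.

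The same degree bookkeeping should also cover the edge subcase. An edge $W$ has four placements, and if $H\approx C_4$ or $2K_2$ each placement already contributes $2$. If $H\approx P_4$, the four placements give four copies of $P_4$, again totalling at least $4$. Finally I would verify that none of these members coincides with $C$, which holds because each of them contains a vertex outside $V(C)$.
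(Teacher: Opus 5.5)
Your proof is correct and follows essentially the same route as the paper: connectivity of $A_4(G)$ gives a member $H\in\mathcal{Q}_G$ meeting both $V(C)$ and its complement, and \Cref{modulo.reemplazo} moves $H$ inside the module to produce enough members of $\mathcal{Q}_G$ other than $C$. The differences are only in the bookkeeping. The paper shows that $|W|=3$ cannot occur, and it uses the fact that $W$ is a module of $H$ to get $\deg(H)=2$ for every $|W|=2$. You instead count four placements when $|W|=3$ and when $W$ is an edge of $C$, and you use the module argument only when $W$ is a diagonal.
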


\begin{proof}
	Put $M=V(C)$. Since $G\neq C$, we have $V(G)-M\neq\varnothing$, and
	since $A_4(G)$ is connected by \Cref{indecomp.characterization}, some
	$H\in \mathcal{Q}_G$ meets both $M$ and $V(G)-M$. Put $W=V(H)\cap M$, so
	that $1\leq|W|\leq 3$. If $|W|=3$, the vertex of $V(H)-W$ is adjacent
	to all or to none of the vertices of $W$, which is impossible because the vertices of
	$P_4$, $C_4$ and $2K_2$ have degree 1 or 2. If $|W|=2$, say $W=\{w_1,w_2\}$, then since $M$ is a module
	every vertex of $V(H)-W$ is adjacent to both or to neither of $w_1,w_2$;
	as no two vertices of $P_4$ have this property, $\deg(H)=2$. By \Cref{modulo.reemplazo},
	each $A\subseteq M$ with $G[A]\approx G[W]$ yields a member
	$G[(V(H)-W)\cup A]$ of $\mathcal{Q}_G$ isomorphic to $H$ and distinct from
	$C$; since $C_4$ has four pairs of vertices inducing an edge and two
	inducing a non-edge, there are at least two such $A$, whence
	$\deg(G)\geq 2+2\cdot 2=6$ by \eqref{deg.como.suma}. If $|W|=1$,
	\Cref{modulo.reemplazo} applied to the four singletons
	$A\subseteq M$ yields four members of $\mathcal{Q}_G$ distinct from $C$, so
	$\deg(G)\geq 2+4=6$.
\end{proof}


\section{Active graphs of degree 1 and 2}\label{sec_deg<3}

An immediate consequence of  \Cref{|G^*|} is that $P_4$ is the only active graph of degree 1. Then, by \Cref{hechos.previos}, $P_4^2$ is the only decomposable active graph of degree 2.
Now, consider a prime graph $G$ such that $\deg(G)=|\mathcal{Q}_G(P_4)|=2$. By  \Cref{5>degG=|{P_4}|.implica.G.split}, $G$ is split. 

Then $\Phi(G)$ is connected, by  \Cref{S.primo.iff.Phi(S).conexo}. This means that $\Phi(G)$ is isomorphic to $K_2$ or $P_3$, ignoring multiplicities. If $(K,I)$ is the bipartition for $G$, we have $(|K|,|I|)=(\omega,\alpha)$, where $\omega=\omega(G)$ and $\alpha=\alpha(G)$. Assume $\alpha\leq\omega$. As $5 \leq \alpha+\omega=|G|$, it follows that $\omega \neq 2$. Thus, $(\omega,\alpha)\in \{(3,2), (3,3)\}$ by \Cref{|I|_|K|<=2deg(S)}.
If $\alpha=2$, then $G\approx D_{2,1}$ by \Cref{S_activo_|I|=2} (see \Cref{grafos.activos.deg<3}).
If $\alpha=3$, then $\Phi=abc$, and \Cref{S_activo_|I|=3} gives $\kappa_b(\pi_b+\kappa_a)=1=\pi_b(\pi_c+\kappa_b)$. Hence $\pi_b=\kappa_b=1$ and $\kappa_a=\pi_c=0$, so $\omega=2$, which is absurd.



So, if $G$ is a prime split graph of degree 2, then $G\approx D_{2,1}$ or $\overline{D_{2,1}}$. If $G$ is a prime non-split graph of degree 2, it is clear that $|\mathcal{Q}_G|=1$. Moreover, $|G|=4$, since otherwise $|\mathcal{Q}_G|\geq 2$. Then, $G\approx C_4$ or $2K_2$.

\begin{theorem}
	\label{clasificacion.activo_deg=2}
	Let $G$ be an active graph.
	\begin{enumerate}
		\item If $\deg(G)=1$, then $G \approx P_4$.
		\item If $\deg(G)=2$ and $G$ is decomposable, then $G \approx P_4^2$.
		\item If $\deg(G)=2$ and $G$ is indecomposable, then $G$ is isomorphic to one of these graphs: $D_{2,1}$, $\overline{D_{2,1}}$, $C_4$, $2K_2$.
	\end{enumerate}
	All graphs mentioned here are represented in  \Cref{grafos.activos.deg<3}.
\end{theorem}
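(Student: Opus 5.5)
The plan is to assemble the three parts from the discussion preceding the statement, filling in the few routine gaps.

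For part (1), \Cref{|G^*|} gives $4\le|G|\le 4$, so $|G|=4$. By \eqref{deg.como.suma}, $\mathcal{Q}_G$ consists of a single member of degree~1, namely a $P_4$ on all four vertices, so $G\approx P_4$.

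For part (2), write $G=S\circ H$ with $S$ split and $|S|,|H|\ge1$. By \Cref{hechos.previos}(1), $\deg(S)+\deg(H)=2$ and both $S$ and $H$ are active. An active graph on at least one vertex has degree at least~1, since any vertex lies in some 2-switch, so $\deg(S)=\deg(H)=1$. Part (1) then gives $S\approx H\approx P_4$, and hence $G\approx P_4^2$.

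For part (3), $G$ is indecomposable and active, hence prime. Split according to whether $\mathcal{Q}_G$ contains a $C_4$ or a $2K_2$.
\begin{itemize}
\item If it contains neither, then $\deg(G)=|\mathcal{Q}_G(P_4)|=2$. \Cref{5>degG=|{P_4}|.implica.G.split} makes $G$ split, and the argument above applies. By \Cref{S.primo.iff.Phi(S).conexo}, $\Phi$ is connected with two edges. Passing to the complement if necessary, assume $|I|\le|K|$. Then \Cref{|I|_|K|<=2deg(S)} and $|G|\ge5$ (from two distinct $P_4$'s) give $(|K|,|I|)\in\{(3,2),(3,3)\}$. The case $|I|=2$ gives $D_{2,1}$ by \Cref{S_activo_|I|=2}. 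The case $|I|=3$, with $\Phi$ a path $abc$, is excluded by \Cref{S_activo_|I|=3}(4), since it forces $|K|=2$.
\item If $\mathcal{Q}_G$ contains a $C_4$ or a $2K_2$, then that member alone already has degree~2, so $\mathcal{Q}_G$ is exactly that one graph. Activity forces every vertex into a member of $\mathcal{Q}_G$, so $|G|=4$ and $G\approx C_4$ or $2K_2$.
\end{itemize}

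The one point I would check carefully is the claim $|G|\ge5$ in the split case, i.e.\ that two distinct $P_4$'s cannot live on the same four vertices. This holds because a 4-vertex graph contains at most one induced $P_4$, namely itself.
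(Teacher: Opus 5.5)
Your proposal is correct and follows the paper's argument essentially step for step: the order bound $4\leq|G|\leq 4\deg(G)$ for degree~1, additivity of the degree and of activity under composition for the decomposable case, and, in the prime case, the factor-graph analysis with $(|K|,|I|)\in\{(3,2),(3,3)\}$ together with the observation that a single induced $C_4$ or $2K_2$ already uses up the whole degree. The only addition is your explicit justification of $|G|\geq 5$ (distinct induced $P_4$'s have distinct vertex sets), which the paper uses without comment.
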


\begin{proof}
	It follows from the previous discussion.
\end{proof}

\begin{figure}[h!]
	\centering
	\begin{tikzpicture}[
		scale=1,
		every node/.style={circle, draw, inner sep=2pt},
		lbl/.style={draw=none}
		]
		
		\begin{scope}[shift={(0,2)}]
			\node (t) at (0,1.2) {};
			\node (l) at (-0.6,0.4) {};
			\node (r) at (0.6,0.4) {};
			\node (b) at (0,-0.4) {};
			\node (e) at (0.6,-0.4) {};
			
			\draw (t)--(l)--(b)--(r)--(t);
			\draw (l)--(r);
			\draw (b)--(e);
			
			\node[lbl] at (1.2,-0.4) {$D_{2,1}$};
		\end{scope}
		
		\begin{scope}[shift={(2.4,2)}]
			\node (t) at (0,1.2) {};
			\node (l) at (-0.6,0.4) {};
			\node (r) at (0.6,0.4) {};
			\node (b) at (0,-0.4) {};
			\node (e) at (0.6,-0.4) {};
			
			\draw (t)--(r);
			\draw (l)--(r);
			\draw (r)--(b);
			\draw (b)--(e);
			
			\node[lbl] at (1.2,0.4) {$\overline{D_{2,1}}$};
		\end{scope}
		
		\begin{scope}[shift={(4.8,2)}]
			
			\node (t1) at (0,1) {};
			\node (t2) at (1,1) {};
			\node (t3) at (2,1) {};
			\node (t4) at (3,1) {};
			
			\node (b1) at (0,0) {};
			\node (b2) at (1,0) {};
			\node (b3) at (2,0) {};
			\node (b4) at (3,0) {};
			
			\draw (t1)--(t2)--(t3)--(t4);
			\draw (b1)--(b2)--(b3)--(b4);
			
			\draw (t2)--(b2);
			\draw (t3)--(b3);
			
			\draw (b1)--(t2);
			\draw (b1)--(t3);
			\draw (b2)--(t3);
			\draw (b4)--(t2);
			\draw (b3)--(t2);
			\draw (b4)--(t3);
			
			\node[lbl] at (1.5,-0.7) {$P_4^2$};
		\end{scope}
		
		\begin{scope}[shift={(0,-1)}]
			\node (a) at (0,1) {};
			\node (b) at (1,1) {};
			\node (c) at (1,0) {};
			\node (d) at (0,0) {};
			
			\draw (a)--(b)--(c)--(d)--(a);
			
			\node[lbl] at (0.5,1.4) {$C_4$};
		\end{scope}
		
		\begin{scope}[shift={(2.4,-1)}]
			\node (a) at (0,1) {};
			\node (b) at (1,1) {};
			\node (c) at (0,0) {};
			\node (d) at (1,0) {};
			
			\draw (a)--(b);
			\draw (c)--(d);
			
			\node[lbl] at (0.5,1.4) {$2K_2$};
		\end{scope}
		
		\begin{scope}[shift={(4.8,-1)}]
			\node (a) at (0,0) {};
			\node (b) at (1,0) {};
			\node (c) at (2,0) {};
			\node (d) at (3,0) {};
			
			\draw (a)--(b)--(c)--(d);
			
			\node[lbl] at (1.5,0.6) {$P_4$};
		\end{scope}
		
	\end{tikzpicture}
	\caption{The 6 active graphs of degree 1 or 2.}
	\label{grafos.activos.deg<3}
\end{figure}


\section{Active graphs of degree 3}\label{sec_deg=3}

Consider an active graph $G$ such that $\deg(G)=|\mathcal{Q}_G(P_4)|=3$. Thanks to  \Cref{5>degG=|{P_4}|.implica.G.split}, we know that $G$ is split. Suppose $G$ is decomposable. Since $G$ is active, it follows from \Cref{hechos.previos} that each factor of $G$ is an active split graph of degree 1 or 2. Consequently,
\[ G \in \{P_4 \circ D_{2,1}, D_{2,1} \circ P_4, P_4 \circ \overline{D_{2,1}}, \overline{D_{2,1}} \circ P_4, P_4^3\}, \]
by  \Cref{clasificacion.activo_deg=2}.

If instead $(G,K,I)$ is indecomposable, assume $\alpha=|I|\leq|K|=\omega$. Clearly, $\omega \geq 3$, since otherwise we would have $G\approx P_4$. Thus, $2 \leq \alpha \leq 4$ and $\omega \in \{3,4\}$, by \Cref{|I|_|K|<=2deg(S)}. Applying \Cref{S.primo.iff.Phi(S).conexo}, we have
\[ \Phi(G) \in \{K_2, K_3, P_3, K_{3,1}, P_4\}, \]
ignoring multiplicities if any (see \Cref{Phi.posibles.deg=3}).
\begin{figure}[h]
	\centering
	\begin{tikzpicture}[scale=1, every node/.style={draw, circle, fill=white, minimum size=0.25cm, inner sep=1pt}]
		
		\begin{scope}[shift={(0,0)}]
			\node (a1) at (0,0) {};
			\node (a2) at (0,1) {};
			\draw (a1) -- (a2);
			\draw[bend left=25] (a1) to (a2);
			\draw[bend right=25] (a1) to (a2);
		\end{scope}
		
		\begin{scope}[shift={(1.7,0)}]
			\node (b1) at (0,0) {};
			\node (b2) at (1,0) {};
			\node (b3) at (0,1) {};
			\draw (b1) -- (b2);
			\draw (b1) -- (b3);
			\draw[bend right=25] (b1) to (b3);
		\end{scope}
		
		\begin{scope}[shift={(4.3,0)}]
			\node (c1) at (0,0) {};
			\node (c2) at (1,0) {};
			\node (c3) at (0,1) {};
			\draw (c1) -- (c2);
			\draw (c1) -- (c3);
			\draw (c2) -- (c3);
		\end{scope}
		
		\begin{scope}[shift={(6.9,0)}]
			\node (d1) at (0,0) {};
			\node (d2) at (1,0) {};
			\node (d3) at (0,1) {};
			\node (d4) at (1,1) {};
			\draw (d1) -- (d2);
			\draw (d1) -- (d3);
			\draw (d2) -- (d4);
		\end{scope}
		
		\begin{scope}[shift={(9.5,0)}]
			\node (e0) at (0.5,0.5) {};
			\node (e1) at (0,1) {};
			\node (e2) at (1,1) {};
			\node (e3) at (0.5,0) {};
			\draw (e0) -- (e1);
			\draw (e0) -- (e2);
			\draw (e0) -- (e3);
		\end{scope}
		
	\end{tikzpicture}
	\caption{The 5 connected multigraphs of size 3.}
	\label{Phi.posibles.deg=3}
\end{figure}
If $\alpha=2$, then $G\approx D_{3,1}$ by \Cref{S_activo_|I|=2} (see \Cref{grafos.primos.deg=3}).
If $\Phi=\Phi(G) \in \{K_3, K_{3,1}, P_4\}$, then $G$ is $\delta$-homogeneous, by  \Cref{hechos.previos}.  The same proposition therefore prohibits $\Phi=P_4$. Suppose $\Phi=K_{3,1}$. Thanks to  \Cref{hechos.previos}, we know that $N_1=N_2=N_3$. On the other hand, $0=|\bigcap_{i=1}^4 N_i|=\eta_{14}$, because $G$ has no universal vertices. This means that $\delta=1$, by  \Cref{prop.basicas.sigma_uv}. But then $\omega=2$, contradicting $\alpha \leq \omega$. Hence, $\alpha\neq 4$.

If $\alpha=3$, then $\Phi=abca$, or $\Phi=abc$ with $\{\sigma_{ab},\sigma_{bc}\}=\{1,2\}$. If $\Phi=abca$ then the six factors in \Cref{S_activo_|I|=3}(2) are equal to $1$, so $\pi_a=\pi_b=\pi_c$, $\kappa_a=\kappa_b=\kappa_c$ and $\pi_a+\kappa_a=1$. This yields two graphs, complementary to each other; we call $T_{111}$ the one with $\pi_a=1$ (the subscript records the degrees of the $I$-vertices). If $\Phi=abc$, then, exchanging $a$ and $c$ if necessary, \Cref{S_activo_|I|=3}(4) gives $\sigma_{ab}=\kappa_b(\pi_b+\kappa_a)$ and $\sigma_{bc}=\pi_b(\pi_c+\kappa_b)$, so $\pi_b=\kappa_b=1$. If $\sigma_{ab}=2$, then $\kappa_a=1$ and $\pi_c=0$, and we call the resulting graph $T_{221}$; if $\sigma_{ab}=1$, then $\kappa_a=0$ and $\pi_c=1$, which yields $\overline{T_{221}}$. In all cases $\omega=3$. These graphs are shown in \Cref{grafos.primos.deg=3}.
\begin{figure}[ht]
	\centering
	\begin{tikzpicture}[scale=0.9, 
		every node/.style={draw, circle, fill=white, inner sep=0pt, minimum size=6.5pt},
		]
		
		\begin{scope}[shift={(0,3.5)}]
			\node (t1) at (0, 1) {};
			\node (t2) at (-0.55, 0) {};
			\node (t3) at (0.55, 0) {};
			\node (p1) at (0, 2) {};
			\node (p2) at (-1.25, -0.7) {};
			\node (p3) at (1.25, -0.7) {};
			\draw (t1) -- (t2) -- (t3) -- (t1);
			\draw (t1) -- (p1);
			\draw (t2) -- (p2);
			\draw (t3) -- (p3);
			\node[draw=none] at (-1, 1) {$T_{111}$};
		\end{scope}
		
		\begin{scope}[shift={(2.2,3)}]
			\node (a)  at (0, 2.2) {};
			\node (k1) at (1.0, 1.3) {};
			\node (k2) at (2.2, 1.3) {};
			\node (k3) at (1.0, 0) {};
			\node (k4) at (2.2, 0) {};
			\node (pp) at (3.2, 0) {};
			\draw (k1) -- (k2) -- (k4) -- (k3) -- (k1);
			\draw (k1) -- (k4);
			\draw (k2) -- (k3);
			\draw (a) -- (k1);
			\draw (a) -- (k2);
			\draw (a) -- (k3);
			\draw (k4) -- (pp);
			\node[draw=none] at (2, 2) {$D_{3,1}$};
		\end{scope}
		
		\begin{scope}[shift={(6.2,3.5)}]
			\node (u1) at (0, 1.3) {};
			\node (u2) at (1, 1.3) {};
			\node (u3) at (2, 1.3) {};
			\node (u4) at (0.5, 0.3) {};
			\node (u5) at (1.5, 0.3) {};
			\node (u6) at (1, -0.7) {};
			\draw (u1) -- (u2) -- (u3);
			\draw (u1) -- (u4);
			\draw (u3) -- (u5);
			\draw (u2) -- (u4);
			\draw (u2) -- (u5);
			\draw (u4) -- (u5);
			\draw (u4) -- (u6);
			\node[draw=none] at (2, -0.7) {$T_{221}$};
		\end{scope}
		
		\begin{scope}[shift={(0,0.5)}]
			\node (b1) at (-1.2, 1) {};
			\node (b2) at (0, 1) {};
			\node (b3) at (1.2, 1) {};
			\node (b4) at (-0.55, 0) {};
			\node (b5) at (0.55, 0) {};
			\node (b6) at (0, -1.1) {};
			\draw (b1) -- (b2) -- (b3);
			\draw (b1) -- (b4);
			\draw (b3) -- (b5);
			\draw (b2) -- (b4);
			\draw (b2) -- (b5);
			\draw (b4) -- (b5);
			\draw (b4) -- (b6);
			\draw (b5) -- (b6);
			\node[draw=none] at (-1, -1) {$\overline{T_{111}}$};
		\end{scope}
		
		\begin{scope}[shift={(2.6,0)}]
			\node (c1) at (0, 1.5) {};
			\node (c2) at (0.9, 0.6) {};
			\node (c3) at (2.0, 0.6) {};
			\node (c4) at (0.9, -0.6) {};
			\node (c5) at (2.0, -0.6) {};
			\node (c6) at (3.1, -0.6) {};
			\draw (c1) -- (c2);
			\draw (c2) -- (c3);
			\draw (c2) -- (c4);
			\draw (c2) -- (c5);
			\draw (c5) -- (c6);
			\node[draw=none] at (-0.1, 0) {$\overline{D_{3,1}}$};
		\end{scope}
		
		\begin{scope}[shift={(6.2,0)}]
			\node (d1) at (0, 1.3) {};
			\node (d2) at (1, 1.3) {};
			\node (d3) at (2, 1.3) {};
			\node (d4) at (0.5, 0.3) {};
			\node (d5) at (1.5, 0.3) {};
			\node (d6) at (1, -0.7) {};
			\draw (d2) -- (d3);
			\draw (d1) -- (d4);
			\draw (d2) -- (d4);
			\draw (d2) -- (d5);
			\draw (d4) -- (d5);
			\draw (d4) -- (d6);
			\draw (d5) -- (d6);
			\node[draw=none] at (2, -0.7) {$\overline{T_{221}}$};
		\end{scope}
		
	\end{tikzpicture}
	\caption{The 6 prime graphs of degree $3$.}
	\label{grafos.primos.deg=3}
\end{figure}

So far, we have classified active graphs $G$ of degree 3 such that $\mathcal{Q}_G=\mathcal{Q}_G(P_4)$. To complete the classification of all active graphs of degree 3, it remains to analyze the following case:
\begin{equation}
	\label{eq38}
	|\mathcal{Q}_G(C_4)| + |\mathcal{Q}_G(2K_2)| = 1 = |\mathcal{Q}_G(P_4)|.
\end{equation}
Recall that, by definition of $\circ$, if $G=X \circ Y$, then $X$ is split. So, if $G$ is decomposable and satisfies \eqref{eq38}, then $G \in \{P_4 \circ C_4, P_4 \circ 2K_2\}$, by \Cref{clasificacion.activo_deg=2} and \Cref{hechos.previos}. In the following lemma, we prove that there are no prime graphs satisfying \eqref{eq38}.

%
%

\begin{lemma}
	\label{no.existe.deg3.primo}
	There exists no prime graph satisfying \eqref{eq38}.
\end{lemma}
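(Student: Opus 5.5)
We argue by contradiction. Let $G$ be a prime graph satisfying \eqref{eq38}, so $\deg(G)=3$ and $\mathcal{Q}_G=\{H,P\}$, where $P$ is the unique induced $P_4$ and $H$ is the unique induced $C_4$ or $2K_2$. Since $G$ is prime if and only if $\overline{G}$ is, and complementation fixes $P_4$ and exchanges $C_4$ with $2K_2$, we may assume $H\approx C_4$. (We take for granted, as the paper does, that $\deg$, activeness and indecomposability are preserved under complementation for arbitrary graphs.)

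First, \Cref{C4.distinguidores} applies: since $\deg(G)=3$, the set $V(H)$ is a module of $G$. Next, $G\neq H$, because $P\in\mathcal{Q}_G$ but $P\neq H$, and $H\approx C_4$ contains no induced $P_4$. We can now invoke \Cref{C4.modulo.deg>=6}: $G$ is prime, $H\prec G$, $H\approx C_4$, $V(H)$ is a module, and $G\neq H$. That result gives $\deg(G)\geq 6$, which contradicts $\deg(G)=3$.

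To be safe, we can also argue directly rather than quote \Cref{C4.modulo.deg>=6}. Since $A_4(G)$ is connected by \Cref{indecomp.characterization}, some member of $\mathcal{Q}_G$ meets both $V(H)$ and its complement. That member can only be $P$.
\begin{itemize}
\item If $|V(P)\cap V(H)|=3$, the outside vertex of $P$ would be adjacent to all or none of those three vertices, since $V(H)$ is a module. This is impossible in $P_4$.
\item If $|V(P)\cap V(H)|=2$, the two outside vertices would each treat the two inside vertices identically, which no pair of vertices of $P_4$ allows.
\item If $|V(P)\cap V(H)|=1$, \Cref{modulo.reemplazo} produces four distinct induced $P_4$'s, one for each vertex of $H$, contradicting $|\mathcal{Q}_G(P_4)|=1$.
\end{itemize}

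The only delicate point is the reduction to $H\approx C_4$ via complementation, that is, that complementation preserves primality and $\deg$ for non-split graphs. If we prefer to avoid this, we can treat $H\approx 2K_2$ symmetrically. A module argument analogous to \Cref{conteo.local.C4} works for $2K_2$, since a distinguishing vertex again creates at least two extra members of $\mathcal{Q}_G$, each of degree at least $1$. Alternatively, we can apply the whole argument to $\overline{G}$.
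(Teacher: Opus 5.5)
Your proof is correct and follows the paper's argument: reduce to $H\approx C_4$ by complementation, obtain that $V(H)$ is a module from \Cref{C4.distinguidores}, note that $G\neq H$, and conclude with \Cref{C4.modulo.deg>=6}. Your direct case analysis is just the proof of \Cref{C4.modulo.deg>=6} specialized to this setting, using $|\mathcal{Q}_G(P_4)|=1$ in place of degree counting. Your hedging about complementation is unnecessary, since the paper makes exactly the same reduction.
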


\begin{proof}
	Assume by contradiction that $G$ is prime and satisfies \eqref{eq38}. A
	$4$-subset of $V(G)$ induces $P_4$, $C_4$ or $2K_2$ in $G$ if and only if
	it induces $P_4$, $2K_2$ or $C_4$ in $\overline{G}$, respectively, and $G$
	is prime if and only if $\overline{G}$ is; hence we may assume
	$\mathcal{Q}_G=\{P,C\}$ with $P\approx P_4$ and $C\approx C_4$. By
	\eqref{deg.como.suma}, $\deg(G)=3$, so $V(C)$ is a module
	of $G$ by \Cref{C4.distinguidores}. Moreover $G\neq C$, because
	$\deg(C)=2\neq\deg(G)$. Therefore $\deg(G)\geq 6$ by
	\Cref{C4.modulo.deg>=6}, a contradiction.
\end{proof}

\begin{theorem}
	\label{clasificacion.activo_deg=3}
	Let $G$ be an active graph of degree 3.
	\begin{enumerate}
		\item If $G$ is decomposable, then $G \in$ \[ \{P_4 \circ D_{2,1}, D_{2,1} \circ P_4, P_4 \circ \overline{D_{2,1}}, \overline{D_{2,1}} \circ P_4, P_4^3, P_4 \circ C_4, P_4 \circ 2K_2\}, \]
		\item If $G$ is indecomposable, then
		\[ G \in \{T_{111}, \overline{T_{111}}, D_{3,1}, \overline{D_{3,1}}, T_{221}, \overline{T_{221}}\}, \]
		all of which are split (see  \Cref{grafos.primos.deg=3}).
	\end{enumerate}
\end{theorem}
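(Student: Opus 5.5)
The proof assembles the discussion preceding the statement, splitting on the composition of $\mathcal{Q}_G$. Let $G$ be active with $\deg(G)=3$. By \eqref{eq18}, $3=2|\mathcal{Q}_G(2K_2)|+2|\mathcal{Q}_G(C_4)|+|\mathcal{Q}_G(P_4)|$. Since the left side is odd, $|\mathcal{Q}_G(P_4)|$ is odd. So either $\mathcal{Q}_G=\mathcal{Q}_G(P_4)$ with three induced $P_4$'s, or \eqref{eq38} holds. These two cases are exhaustive.

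\emph{First case: $\mathcal{Q}_G=\mathcal{Q}_G(P_4)$.} By \Cref{5>degG=|{P_4}|.implica.G.split}, $G$ is split.
\begin{itemize}
\item If $G$ is decomposable, write $G=G_n\circ\cdots\circ G_1$. By \Cref{hechos.previos}(1), each factor is active and the degrees add to $3$. Each factor is prime with degree $1$ or $2$, and split, since $G$ has no induced $C_4$ or $2K_2$. By \Cref{clasificacion.activo_deg=2} each factor is therefore $P_4$, $D_{2,1}$ or $\overline{D_{2,1}}$. The ordered degree patterns $(1,2)$, $(2,1)$ and $(1,1,1)$ give exactly the first five graphs of item~1.
\item If $G$ is indecomposable, it is prime, and we pass to complements to assume $|I|\le|K|$. \Cref{|I|_|K|<=2deg(S)} together with \Cref{S.primo.iff.Phi(S).conexo} bounds $|I|\le4$ and forces $\Phi$ to be one of the five connected multigraphs of size~3.
\begin{itemize}
\item For $|I|=2$, \Cref{S_activo_|I|=2} gives $D_{3,1}$.
\item For $|I|=4$, $\Phi$ is simple, so $G$ is homogeneous by \Cref{hechos.previos}(2). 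Part (3b) of that proposition excludes $\Phi=P_4$. For $\Phi=K_{3,1}$, part (3a) gives $N_1=N_2=N_3$, and then $\delta=1$ forces $\omega=2$, contradicting $|I|\le|K|$.
\item For $|I|=3$, \Cref{S_activo_|I|=3} turns the edge multiplicities into the equations solved in the text. The triangle yields $T_{111}$ or $\overline{T_{111}}$, and the path with multiplicities $\{1,2\}$ yields $T_{221}$ or $\overline{T_{221}}$.
\end{itemize}
Adding back the complements (the case $|I|>|K|$) gives exactly the six graphs of item~2, all of them split.
\end{itemize}

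\emph{Second case: \eqref{eq38} holds.} Then $G$ is not split. If $G$ were indecomposable, it would be prime, which is impossible by \Cref{no.existe.deg3.primo}. So $G=X\circ Y$ with $X$ split and active. By \Cref{hechos.previos}(1), $\deg X+\deg Y=3$. Any induced $C_4$ or $2K_2$ must lie inside a single non-split factor, and only the last factor can be non-split. That factor has degree at least $2$, so it is $C_4$ or $2K_2$ by \Cref{clasificacion.activo_deg=2}, and the remaining factor has degree $1$, hence is $P_4$. This gives $P_4\circ C_4$ and $P_4\circ 2K_2$, completing item~1.

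The main obstacle is the completeness of the $|I|=3$ case analysis and the exclusion of $|I|=4$. Both rely on the homogeneity facts of \Cref{hechos.previos}, which the preceding discussion has already verified. The remaining checks are routine bookkeeping.
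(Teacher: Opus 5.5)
Your proposal is correct and follows essentially the same route as the paper. The paper's proof assembles the same preceding case analysis: the case $\mathcal{Q}_G=\mathcal{Q}_G(P_4)$ via splitness, the factor graph and the $|I|\le|K|$ reduction, and the case \eqref{eq38} via \Cref{no.existe.deg3.primo} plus the reduction of the decomposable graphs to $P_4\circ C_4$ and $P_4\circ 2K_2$. Your parity argument on \eqref{eq18} only makes explicit why these two cases are exhaustive, which the paper leaves implicit.
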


\begin{proof}
	It follows from the previous discussion and \Cref{no.existe.deg3.primo}.
\end{proof}


\section{Prime split graphs of degree 4}\label{sec_deg=4}

We now classify the prime split graphs of degree 4. Consider a prime graph $G$ such that $\deg(G) = |\mathcal{Q}_G(P_4)| = 4$. Thanks to  \Cref{5>degG=|{P_4}|.implica.G.split}, we know that $G$ is split. Assume $G$ has bipartition $(K,I)$ and $\alpha = |I| \leq |K| = \omega$. Clearly, $\omega \geq 3$, since otherwise we would have $G\approx P_4$. Thus, $2 \leq \alpha \leq 5$ and $3 \leq \omega \leq 5$, by \Cref{|I|_|K|<=2deg(S)}. Applying \Cref{S.primo.iff.Phi(S).conexo}, we have that $\Phi = \Phi(G)$ is isomorphic to one of the 12 multigraphs in  \Cref{Phi.posibles.deg=4}.
\begin{figure}[h]
	\centering
	\begin{tikzpicture}[scale=1, every node/.style={draw, circle, fill=white, minimum size=0.25cm, inner sep=1pt}]
		
		\node (a1) at (0,0) {};
		\node (a2) [right=of a1] {};
		\draw (a1) -- (a2);
		\draw[bend left=15] (a1) to (a2);
		\draw[bend left=35] (a1) to (a2);
		\draw[bend right=15] (a1) to (a2);
		
		\node (b1) [right=2cm of a2] {};
		\node (b2) [right=of b1] {};
		\node (b3) [above=of b1] {};
		\draw (b1) -- (b2);
		\draw (b1) -- (b3);
		\draw[bend left=20] (b1) to (b2);
		\draw[bend right=20] (b1) to (b3);
		
		\node (c1) [right=2cm of b2] {};
		\node (c2) [right=of c1] {};
		\node (c3) [above=of c1] {};
		\draw (c1) -- (c2);
		\draw (c1) -- (c3);
		\draw (c2) -- (c3);
		\draw[bend left=20] (c2) to (c3);
		
		\node (d1) at (0,-2) {};
		\node (d2) [right=of d1] {};
		\node (d3) [above=of d1] {};
		\node (d4) [above=of d2] {};
		\draw (d1) -- (d2);
		\draw (d1) -- (d3);
		\draw (d2) -- (d4);
		\draw (d3) -- (d4);
		
		\node (e1) [right=2cm of d2] {};
		\node (e2) [right=of e1] {};
		\node (e3) [above=of e1] {};
		\node (e4) [above=of e2] {};
		\draw (e4) -- (e3);
		\draw (e1) -- (e3);
		\draw[bend left=20] (e1) to (e3);
		\draw (e2) -- (e3);
		
		\node (f1) [right=2cm of e2] {};
		\node (f2) [right=of f1] {};
		\node (f3) [above=of f1] {};
		\draw (f1) -- (f3);
		\draw (f2) -- (f3);
		\draw[bend left=20] (f3) to (f1);
		\draw[bend right=20] (f3) to (f1);
		
		\node (g1) at (0,-4) {};
		\node (g2) [right=of g1] {};
		\node (g3) [above=of g1] {};
		\node (g4) [above=of g2] {};
		\draw (g1) -- (g2);
		\draw (g1) -- (g3);
		\draw (g3) -- (g4);
		\draw[bend left=20] (g3) to (g1);
		
		\node (h1) [right=2cm of g2] {};
		\node (h2) [right=of h1] {};
		\node (h3) [above=of h1] {};
		\node (h4) [above=of h2] {};
		\draw (h1) -- (h2);
		\draw (h1) -- (h3);
		\draw (h2) -- (h4);
		\draw (h2) -- (h3);
		
		\node (i1) [right=2cm of h2] {};
		\node (i2) [right=of i1] {};
		\node (i3) [above=of i1] {};
		\node (i4) [above=of i2] {};
		\draw (i1) -- (i2);
		\draw (i1) -- (i3);
		\draw (i2) -- (i4);
		\draw[bend left=20] (i1) to (i3);
		
		\node (j1) at (0,-6) {};
		\node (j2) [right=of j1] {};
		\node (j3) [above=of j1] {};
		\node (j4) [above=of j2] {};
		\node (j5) [below=of j1] {};
		\draw (j1) -- (j5);
		\draw (j1) -- (j3);
		\draw (j2) -- (j4);
		\draw (j3) -- (j4);
		
		\node (k1) [right=2cm of j2] {};
		\node (k2) [right=of k1] {};
		\node (k3) [above=of k1] {};
		\node (k4) [above=of k2] {};
		\node (k5) [below=of k1] {};
		\draw (k1) -- (k2);
		\draw (k1) -- (k3);
		\draw (k1) -- (k4);
		\draw (k1) -- (k5);
		
		\node (l1) [right=2cm of k2] {};
		\node (l2) [right=of l1] {};
		\node (l3) [above=of l1] {};
		\node (l4) [above=of l2] {};
		\node (l5) [below=of l1] {};
		\draw (l5) -- (l2);
		\draw (l1) -- (l3);
		\draw (l1) -- (l4);
		\draw (l1) -- (l5);
	\end{tikzpicture}
	\caption{The 12 connected multigraphs of size 4.}
	\label{Phi.posibles.deg=4}
\end{figure}
If $\alpha=2$, then $G\approx D_{4,1}$ or $D_{2,2}$ by \Cref{S_activo_|I|=2} (see \Cref{split.primos.deg=4.fig.1}). Thanks to \Cref{hechos.previos}, we can immediately rule out that $\Phi$ is the $K_3$ with a pendant vertex, $P_5$, or $\overline{D_{2,1}}$. 
Applying Theorem 4.1 of \cite{schvollner2026induced}, we can also exclude that $\Phi = v_1v_2v_3v_4$ with $\sigma_{23} = 1$. If $\Phi = abca$ with $\sigma_{ac} = 2$, then by  \Cref{prop.basicas.sigma_uv} we would have $d_a = d_b = d_c = d_a \pm 1$, which is absurd.

If $\Phi \approx K_{4,1}$, let $v_5$ be the vertex of degree 4 in $K_{4,1}$ and $v_i$ its leaves, $i \in [4]$. Applying \Cref{hechos.previos}, we have that the $N_i=N_j$ for all $i,j\in [4]$, so $K = N_1 \cup N_5$. On the other hand, since $G$ has no universal vertices, we have $\varnothing = \bigcap_{j=1}^5 N_j = N_1 \cap N_5$, and consequently, $d_1 = 1 = d_5$, by  \Cref{prop.basicas.sigma_uv}. Then, $5 = \alpha \leq \omega = 2$, an absurdity.

If $\Phi = v_1v_2v_3v_4v_1$, then $N_1 = N_3$ and $N_2 = N_4$, by \Cref{hechos.previos}, so $K = N_1 \cup N_2$. Since $G$ has no universal vertices, we have $\varnothing = \bigcap_{i=1}^4 N_i = N_1 \cap N_2$, and consequently, $d_1 = 1 = d_2$, by  \Cref{prop.basicas.sigma_uv}. Then, $4 = \alpha \leq \omega = d_1 + d_2 = 2$, which is impossible.

If $\Phi = v_1v_2v_3v_4$ with $\sigma_{23} = 2$, then we can assume without loss of generality that $d_2 \leq d_3$. Then, $d_1 = d_2$, $d_3 = d_1 + 1 = d_4$, $N_2 \subseteq N_4$, and $N_1 \subseteq N_3, N_4$, by  \Cref{prop.basicas.sigma_uv}. Thus, $K = N_3 \cup N_4$. Since $G$ has no universal vertices, we have $\varnothing = \bigcap_{i=1}^4 N_i = N_1 \cap N_2$, and consequently, $d_1 = 1$, by  \Cref{prop.basicas.sigma_uv}. Thus, $d_3 = 2$ and $\eta_{34} = 1$, again by  \Cref{prop.basicas.sigma_uv}. Finally, we obtain $4 = \alpha \leq \omega = d_3 + d_4 - \eta_{34} = 3$, an absurdity.

If $E(\Phi) = \{2v_1v_2, v_2v_3, v_2v_4\}$ (a $K_{3,1}$ with a double-edge), then $d_3 = d_2 = d_4$, $|d_2 - d_1| = 1$, and $N_3 = N_4$, by  \Cref{prop.basicas.sigma_uv}. If $d_2 = d_1 + 1$, then $N_1 \subseteq N_3$, so $K = N_2 \cup N_3$. Since $G$ has no universal vertices, we have $\varnothing = \bigcap_{i=1}^4 N_i = N_1 \cap N_2$, and consequently, $d_1 = 1$, by  \Cref{prop.basicas.sigma_uv}. Thus, $\eta_{23} = 1$, again by  \Cref{prop.basicas.sigma_uv}. Finally, we obtain $4 = \alpha \leq \omega = d_2 + d_3 - \eta_{23} = 3$, which is impossible. The case $d_1 = d_2 + 1$ is reduced to an absurdity with similar arguments.

\begin{figure}[ht]
	\centering
	\begin{tikzpicture}[scale=0.9, 
		every node/.style={draw, circle, fill=white, inner sep=0pt, minimum size=6.5pt},
		]
		
		\begin{scope}[shift={(0,3.5)}]
			\node (t)  at (90:1.3)  {}; 
			\node (ur) at (18:1.3)  {}; 
			\node (lr) at (-54:1.3) {}; 
			\node (ll) at (234:1.3) {}; 
			\node (ul) at (162:1.3) {}; 
			\node (v1) at (-2.4, 1.3)  {}; 
			\node (v2) at ( 2.2, -1.053) {}; 
			\draw (t)--(ur); \draw (ur)--(lr); \draw (lr)--(ll); \draw (ll)--(ul); \draw (ul)--(t);
			\draw (t)--(lr); \draw (t)--(ll); \draw (ur)--(ll); \draw (ur)--(ul); \draw (lr)--(ul);
			\draw (v1)--(t);
			\draw (v1)--(ul);
			\draw (v1)--(ur);
			\draw (v1)--(ll);
			\draw (v2)--(lr);
			\node[draw=none] at (-2.0, -0.2) {$D_{4,1}$};
		\end{scope}
		
		\begin{scope}[shift={(5.5,3)}]
			\node (ul) at (-0.55, 0.55) {};
			\node (ur) at ( 0.55, 0.55) {};
			\node (lr) at ( 0.55,-0.55) {};
			\node (ll) at (-0.55,-0.55) {};
			\node (w1) at (-1.85, 0)    {};
			\node (w2) at ( 1.85, 0)    {};
			\draw (ul)--(ur); \draw (ur)--(lr); \draw (lr)--(ll); \draw (ll)--(ul);
			\draw (ul)--(lr); \draw (ur)--(ll);
			\draw (w1)--(ul); \draw (w1)--(ll);
			\draw (w2)--(ur); \draw (w2)--(lr);
			\node[draw=none] at (0, 1.2) {$D_{2,2}$};
		\end{scope}
		
		\begin{scope}[shift={(0,0.4)}]
			\node (A)  at (0, 0)      {}; 
			\node (p1) at (-0.4, 1)    {}; 
			\node (p2) at (-1.3, 0)   {}; 
			\node (p3) at ( 1.3, 0)   {}; 
			\node (p4) at (-0.4,-0.9){}; 
			\node (B)  at ( 0.9,-0.9) {}; 
			\node (p5) at ( 2.2,-0.9) {}; 
			\draw (A)--(p1);
			\draw (A)--(p2);
			\draw (A)--(p3);
			\draw (A)--(p4);
			\draw (A)--(B);
			\draw (B)--(p5);
			\node[draw=none] at (1, 0.8) {$\overline{D_{4,1}}$};
		\end{scope}
		
		\begin{scope}[shift={(5.5,0)}]
			\node (A)  at (0, 0.5)    {};
			\node (q1) at (-1.3, 0.5) {};
			\node (q2) at ( 1.3, 0.5) {};
			\node (B)  at ( 0.7,-0.5) {};
			\node (q3) at (-0.6,-0.5) {};
			\node (q4) at ( 2.0,-0.5) {};
			\draw (A)--(q1);
			\draw (A)--(q2);
			\draw (A)--(B);
			\draw (B)--(q3);
			\draw (B)--(q4);
			\node[draw=none] at (0, 1.2) {$\overline{D_{2,2}}$};
		\end{scope}
		
	\end{tikzpicture}
	\caption{The prime split graphs $D_{4,1}, D_{2,2}$ and their complements.}
	\label{split.primos.deg=4.fig.1}
\end{figure}

If $\Phi=abc$ ignoring multiplicities, then, exchanging $a$ and $c$ if necessary, \Cref{S_activo_|I|=3} gives $\sigma_{ab}=\kappa_b(\pi_b+\kappa_a)$ and $\sigma_{bc}=\pi_b(\pi_c+\kappa_b)$, with $\sigma_{ab}+\sigma_{bc}=4$ and $\pi_b,\kappa_b\geq 1$. A direct computation yields exactly the five graphs of the following table, where only the parameters of \Cref{S_activo_|I|=3} not forced to vanish are listed; as in \Cref{sec_deg=3}, the subscript of each name records the degrees of the $I$-vertices. In all cases $\omega\geq 3=\alpha$.
\begin{center}
	\begin{tabular}{cccc}
		\hline
		$(\sigma_{ab},\sigma_{bc})$ & $(\pi_b,\pi_c,\kappa_a,\kappa_b)$ & $\omega$ & $G$ \\
		\hline
		$(1,3)$ & $(1,2,0,1)$ & $4$ & $R_{311}$ \\
		$(3,1)$ & $(1,0,2,1)$ & $4$ & $R_{331}$ \\
		$(2,2)$ & $(1,1,1,1)$ & $4$ & $R_{321}$ \\
		$(2,2)$ & $(2,0,0,1)$ & $3$ & $R_{211}$ \\
		$(2,2)$ & $(1,0,0,2)$ & $3$ & $\overline{R_{211}}$ \\
		\hline
	\end{tabular}
\end{center}
These graphs, together with $\overline{R_{311}}$, $\overline{R_{331}}$ and $\overline{R_{321}}$, are shown in \Cref{split.primos.deg=4.fig.2,split.primos.deg=4.fig.3}.
\begin{figure}[h]
	\centering
	\begin{tikzpicture}[scale=0.9, 
		every node/.style={draw, circle, fill=white, inner sep=0pt, minimum size=6.5pt},
		]
		
		\begin{scope}[shift={(1,3.2)}]
			\node (ul) at (0, 1)  {};
			\node (ur) at (1, 1)  {};
			\node (ll) at (0, 0)  {};
			\node (lr) at (1, 0)  {};
			\node (w1) at (-1.3, 1)   {}; 
			\node (w2) at (-1.3, 0)   {}; 
			\node (w3) at ( 2, 2) {}; 
			\draw (ul)--(ur); \draw (ur)--(lr); \draw (lr)--(ll); \draw (ll)--(ul);
			\draw (ul)--(lr); \draw (ur)--(ll);
			\draw (w1)--(ul);
			\draw (w2)--(ll);
			\draw (w3)--(ul); \draw (w3)--(ur); \draw (w3)--(lr);
			\node[draw=none] at (-1.0, 1.8) {$R_{311}$};
		\end{scope}
		
		\begin{scope}[shift={(6.6,3.2)}]
			\node (ul) at (0, 1)  {};
			\node (ur) at (1, 1)  {};
			\node (ll) at (0, 0)  {};
			\node (lr) at (1, 0)  {};
			\node (w1) at (-1, 2) {}; 
			\node (w2) at (2, 2) {}; 
			\node (w3) at (-1.3, 0)   {}; 
			\draw (ul)--(ur); \draw (ur)--(lr); \draw (lr)--(ll); \draw (ll)--(ul);
			\draw (ul)--(lr); \draw (ur)--(ll);
			\draw (w1)--(ul); \draw (w1)--(ur);
			\draw (w2)--(ul); \draw (w2)--(ur);
			\draw (w3)--(ll); \draw (ll)--(w1); 
			\draw (lr)--(w2);
			\node[draw=none] at (-1.2, 0.8) {$R_{331}$};
		\end{scope}
		
		\begin{scope}[shift={(1,1)}]
			\node (A) at (0, 1)      {};
			\node (B) at (-0.55, 0)  {};
			\node (C) at ( 0.55, 0)  {};
			\node (p1) at (-1.3, 1)  {};
			\node (p2) at ( 1.3, 1)  {};
			\node (p3) at ( 1.7, 0)  {};
			\node (p4) at ( 0.55,-1.1) {};
			\draw (A)--(B); \draw (B)--(C); \draw (A)--(C);
			\draw (A)--(p1);
			\draw (A)--(p2);
			\draw (A)--(p3);   
			\draw (B)--(p1);
			\draw (C)--(p2);   
			\draw (C)--(p3);
			\draw (C)--(p4);
			\node[draw=none] at (-1.1,-0.8) {$\overline{R_{311}}$};
		\end{scope}
		
		\begin{scope}[shift={(6.6,1)}]
			\node (A) at (0, 1)      {};
			\node (B) at (-0.55, 0)  {};
			\node (C) at ( 0.55, 0)  {};
			\node (q1) at (-1.3, 1)  {};
			\node (q2) at ( 1.3, 1)  {};
			\node (q3) at ( 1.7, 0)  {};
			\node (q4) at ( 0.55,-1.1) {};
			\draw (A)--(B); \draw (B)--(C); \draw (A)--(C);
			\draw (A)--(q2);
			\draw (B)--(q1);
			\draw (C)--(q3);
			\draw (C)--(q4);
			\draw (C)--(q2);
			\node[draw=none] at (-1.1,-0.8) {$\overline{R_{331}}$};
		\end{scope}
		
	\end{tikzpicture}
	\caption{The prime split graphs $R_{311}, R_{331}$ and their complements.}
	\label{split.primos.deg=4.fig.2}
\end{figure}

\begin{figure}[h]
	\centering
	\begin{tikzpicture}[scale=0.9, 
		every node/.style={draw, circle, fill=white, inner sep=0pt, minimum size=6.5pt},
		]
		
		\begin{scope}[shift={(0,3.2)}]
			\node (T)  at (0, 1)      {};
			\node (BL) at (-0.55, 0)  {};
			\node (BR) at ( 0.55, 0)  {};
			\node (p1) at ( 1.4, 1.3) {}; 
			\node (p2) at ( 1.9, 0.2) {}; 
			\node (p3) at ( 0, -1.1)  {}; 
			\draw (T)--(BL); \draw (T)--(BR); \draw (BL)--(BR);
			\draw (T)--(p1);
			\draw (T)--(p2);
			\draw (BL)--(p3);
			\draw (BR)--(p3);
			\node[draw=none] at (1.3, -0.9) {$R_{211}$};
		\end{scope}
		
		\begin{scope}[shift={(6.6,2.2)}]
			\node (ul) at (0, 1) {};
			\node (ur) at (1, 1) {};
			\node (ll) at (0, 0) {};
			\node (lr) at (1, 0) {};
			\node (w1) at (-1.3, 1.8) {}; 
			\node (w2) at ( 2.3, 1.8) {}; 
			\node (w3) at (-1.3, 0)   {}; 
			\draw (ul)--(ur); \draw (ur)--(lr); \draw (lr)--(ll); \draw (ll)--(ul);
			\draw (ul)--(lr); \draw (ur)--(ll);
			\draw (w1)--(ul); \draw (w1)--(ur);
			\draw (w2)--(ur); \draw (w2)--(lr);
			\draw (w3)--(ll);
			\draw (w1)--(ll);
			\node[draw=none] at (2.3, 0.1) {$R_{321}$};
		\end{scope}
		
		\begin{scope}[shift={(0,0)}]
			\node (A) at (0, 1)      {};
			\node (B) at (-0.55, 0)  {};
			\node (C) at ( 0.55, 0)  {};
			\node (q1) at ( 1.4, 1)  {};
			\node (q2) at ( 1.7, 0)  {};
			\node (q3) at (0,-0.9){};
			\draw (A)--(B); \draw (B)--(C); \draw (A)--(C);
			\draw (A)--(q1);
			\draw (A)--(q2);
			\draw (C)--(q1);
			\draw (C)--(q2);
			\draw (B)--(q3);
			\node[draw=none] at (1.4, -0.9) {$\overline{R_{211}}$};
		\end{scope}
		
		\begin{scope}[shift={(6.6,0)}]
			\node (A) at (0, 1)       {};
			\node (B) at (-0.55, 0)   {};
			\node (C) at ( 0.55, 0)   {};
			\node (q1) at (-1.3, 1)   {};
			\node (q2) at ( 1.3, 1)   {};
			\node (q3) at ( 1.7, 0)   {};
			\node (q4) at ( 0, -1.1)  {};
			\draw (A)--(B); \draw (B)--(C); \draw (A)--(C);
			\draw (A)--(q2);
			\draw (B)--(q1);
			\draw (C)--(q3);
			\draw (B)--(q4);
			\draw (C)--(q4);
			\draw (C)--(q2);
			\node[draw=none] at (2.4, -0.9) {$\overline{R_{321}}$};
		\end{scope}
		
	\end{tikzpicture}
	\caption{The prime split graphs $R_{211}, R_{321}$ and their complements.}
	\label{split.primos.deg=4.fig.3}
\end{figure}
\begin{theorem}
	\label{clasificacion.split.primos.deg=4}
	The 12 graphs in \Cref{split.primos.deg=4.fig.1,split.primos.deg=4.fig.2,split.primos.deg=4.fig.3} are all the prime split graphs of degree 4.
\end{theorem}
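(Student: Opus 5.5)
The proof assembles the case analysis carried out just before the statement. Let $G$ be a prime split graph of degree $4$ with bipartition $(K,I)$. Since complementation preserves primality, splitness, degree and activity, and exchanges the roles of $K$ and $I$, I first assume $\alpha=|I|\le|K|=\omega$. At the end I recover the graphs with $\alpha>\omega$ as complements of those with $\alpha<\omega$. By \Cref{|I|_|K|<=2deg(S)} we have $2\le\alpha\le 5$. By \Cref{S.primo.iff.Phi(S).conexo}, $\Phi(G)$ is a connected multigraph on $\alpha$ vertices with exactly $4$ edges, so it is one of the twelve multigraphs of \Cref{Phi.posibles.deg=4}. The plan is to go through this list exhaustively, grouping the cases by $\alpha$.

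For $\alpha=2$, \Cref{S_activo_|I|=2} gives $G\approx D_{d_b,d_a}$ with $d_ad_b=4$. This yields $D_{4,1}$ and $D_{2,2}$; by the lemma, $D_{1,4}\approx D_{4,1}$, so no further graph arises.

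For $\alpha=3$ the underlying simple graph of $\Phi$ is $K_3$ or $P_3$.
\begin{itemize}
\item If $\Phi$ is the triangle with one doubled edge, \Cref{prop.basicas.sigma_uv}(4),(5) forces incompatible degree equalities, so this case is impossible.
\item If $\Phi$ is a path $abc$, I use the parametrization of \Cref{S_activo_|I|=3}(4). I solve $\kappa_b(\pi_b+\kappa_a)+\pi_b(\pi_c+\kappa_b)=4$ over nonnegative integers with $\pi_b,\kappa_b\ge1$. The split of $4$ is $(1,3)$, $(3,1)$ or $(2,2)$. Enumerating the finitely many solutions gives exactly the five parameter vectors in the table.
\end{itemize}
By \Cref{S_activo_|I|=3}(3) each vector determines a unique graph up to isomorphism, so these are exactly the five graphs $R_{311}$, $R_{331}$, $R_{321}$, $R_{211}$ and $\overline{R_{211}}$.

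For $\alpha\in\{4,5\}$, every remaining multigraph in \Cref{Phi.posibles.deg=4} has been excluded in the discussion above:
\begin{itemize}
\item the triangle with a pendant vertex, $P_5$, and the shape $\overline{D_{2,1}}$, by \Cref{hechos.previos}(3b) via homogeneity;
\item $P_4$ with a simple middle edge, by the cited Theorem 4.1 of \cite{schvollner2026induced};
\item $K_{4,1}$, $C_4$, $P_4$ with a doubled middle edge, and $K_{3,1}$ with a doubled edge, since each forces $\omega<\alpha$;
\item the remaining multigraphs (for instance $P_4$ with a doubled end edge), each of which must be checked to reduce to one of the situations above.
\end{itemize}
I expect the main obstacle to lie here. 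The task is to confirm that all twelve shapes are really accounted for. The subcase of $K_{3,1}$ with a doubled edge and $d_1=d_2+1$ was dismissed as ``similar''. In that subcase I would argue as follows: $N_2\subseteq N_1$ fails, so \Cref{prop.basicas.sigma_uv}(5) gives $|N_1-N_2|=2$ and $|N_2-N_1|=1$. Together with $N_3=N_4$ and the absence of universal vertices, this should force $\omega\le3<4$.

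Finally I collect the results. The graphs with $\alpha<\omega$ are $D_{4,1}$, $D_{2,2}$, $R_{311}$, $R_{331}$ and $R_{321}$ (by the table, $\omega=4>3$ for the $R$'s). Their complements give five more graphs. The graphs with $\alpha=\omega$ are $R_{211}$ and $\overline{R_{211}}$, which are already closed under complementation. Primality of all twelve follows from connectivity of $\Phi$ and \Cref{S.primo.iff.Phi(S).conexo}, and their degree is $|E(\Phi)|=4$. It remains to check that the twelve graphs are pairwise non-isomorphic. For this I compare the pair $(\omega,\alpha)$, which is unique since these graphs are active, together with the multiset of $I$-degrees recorded in the subscripts. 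This gives exactly the twelve graphs of the figures.
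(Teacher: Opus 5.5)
Your proposal is correct and is essentially the paper's own argument, which likewise just assembles the preceding case analysis over the twelve possible shapes of $\Phi$. Two loose ends close at once. First, your fourth bullet is empty: a $P_4$ with a doubled end edge is exactly the case $\sigma_{23}=1$, already excluded by the cited Theorem~4.1, so your first three bullets together with the cases $\alpha\le 3$ cover all $1+3+8=12$ shapes. Second, in the subcase $d_1=d_2+1$ the step you omit is $N_3=N_4\subseteq N_1$, which follows from $\sigma_{13}=0$ and $d_3=d_2<d_1$; the absence of universal vertices then gives $N_2\cap N_3=\varnothing$, so $\sigma_{23}=d_2d_3=1$ forces $d_2=1$ and $d_1=2$, whence $\omega=|N_1\cup N_2|\leq 3<4=\alpha$.
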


\begin{proof}
	It follows from the previous discussion.
\end{proof}		


\section{Prime non-split graphs of degree 4}\label{sec_deg=4.no.split}

For $j\in[3]$, let $B_j$ be the graph with vertex set $\{v,c_0,c_1,c_2,c_3\}$ in which $c_0c_1c_2c_3c_0$ is an induced $C_4$ and $N_{B_j}(v)=\{c_0,\dots,c_{j-1}\}$. 
These six graphs are displayed in \Cref{no.split.primos.deg=4.fig}. In this section we show that they are all the prime non-split graphs of degree 4.

\begin{figure}[ht]
	\centering
	\begin{tikzpicture}[scale=0.9,
		every node/.style={draw, circle, fill=white, inner sep=0pt,
			minimum size=6.5pt},
		]
		
		\begin{scope}[shift={(0,0)}]
			\node (c0) at (0,1) {};
			\node (c1) at (1,1) {};
			\node (c2) at (1,0) {};
			\node (c3) at (0,0) {};
			\node (v)  at (-0.9,1.8) {};
			\draw (c0)--(c1); \draw (c1)--(c2); \draw (c2)--(c3); \draw (c3)--(c0);
			\draw (v)--(c0);
			\node[draw=none] at (0.5,-0.85) {$B_1$};
		\end{scope}
		
		\begin{scope}[shift={(4,0)}]
			\node (c0) at (0,1) {};
			\node (c1) at (1,1) {};
			\node (c2) at (1,0) {};
			\node (c3) at (0,0) {};
			\node (v)  at (0.5,1.9) {};
			\draw (c0)--(c1); \draw (c1)--(c2); \draw (c2)--(c3); \draw (c3)--(c0);
			\draw (v)--(c0); \draw (v)--(c1);
			\node[draw=none] at (0.5,-0.85) {$B_2$};
		\end{scope}
		
		\begin{scope}[shift={(8,0)}]
			\node (c0) at (0,1) {};
			\node (c1) at (1,1) {};
			\node (c2) at (1,0) {};
			\node (c3) at (0,0) {};
			\node (v)  at (1.9,1.8) {};
			\draw (c0)--(c1); \draw (c1)--(c2); \draw (c2)--(c3); \draw (c3)--(c0);
			\draw (v)--(c0); \draw (v)--(c1); \draw (v)--(c2);
			\node[draw=none] at (0.5,-0.85) {$B_3$};
		\end{scope}
		
		\begin{scope}[shift={(0,-3)}]
			\node (a) at (-0.6,1.0) {};
			\node (b) at (-0.6,0.0) {};
			\node (c) at (0.27,0.5) {};
			\node (d) at (1.14,0.5) {};
			\node (e) at (2.01,0.5) {};
			\draw (a)--(b); \draw (a)--(c); \draw (b)--(c);
			\draw (c)--(d); \draw (d)--(e);
			\node[draw=none] at (0.5,-0.6) {$\overline{B_1}$};
		\end{scope}
		
		\begin{scope}[shift={(4,-3)}]
			\node (p1) at (-0.9,0.5) {};
			\node (p2) at (-0.2,0.5) {};
			\node (p3) at ( 0.5,0.5) {};
			\node (p4) at ( 1.2,0.5) {};
			\node (p5) at ( 1.9,0.5) {};
			\draw (p1)--(p2); \draw (p2)--(p3); \draw (p3)--(p4); \draw (p4)--(p5);
			\node[draw=none] at (0.5,-0.6) {$\overline{B_2}$};
		\end{scope}
		
		\begin{scope}[shift={(8,-3)}]
			\node (q1) at (-0.2,1.0) {};
			\node (q2) at ( 0.5,1.0) {};
			\node (q3) at ( 1.2,1.0) {};
			\node (r1) at ( 0.15,0.0) {};
			\node (r2) at ( 0.85,0.0) {};
			\draw (q1)--(q2); \draw (q2)--(q3); \draw (r1)--(r2);
			\node[draw=none] at (0.5,-0.6) {$\overline{B_3}$};
		\end{scope}
		
	\end{tikzpicture}
	\caption{The six non-split prime graphs of degree 4.}
	\label{no.split.primos.deg=4.fig}
\end{figure}

\begin{theorem}
	\label{clasificacion.no.split.primos.deg=4}
	The graphs $B_1,B_2,B_3$ and their complements (see \Cref{no.split.primos.deg=4.fig}) are all the non-split prime graphs of degree $4$.
\end{theorem}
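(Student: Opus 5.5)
The proof has two directions: show that each $B_j$ and $\overline{B_j}$ is a prime non-split graph of degree~4, and show that every prime non-split graph of degree~4 is one of them. Both rest on the local counting around an induced $C_4$ from \Cref{conteo.local.C4} and \Cref{C4.distinguidores}, together with complementation.

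\emph{The graphs $B_j$ qualify.} Fix $j\in[3]$ and let $C=c_0c_1c_2c_3c_0$, an induced $C_4$ of $B_j$. Since $|B_j|=5$, every $4$-subset of $V(B_j)$ is either $V(C)$ or one of the sets $W_i$ from the proof of \Cref{conteo.local.C4}. Hence $\mathcal{Q}_{B_j}=\{C\}\cup\mathcal{H}_v$.

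The vertex $v$ distinguishes $V(C)$, and $N(v)\cap V(C)=\{c_0,\dots,c_{j-1}\}$ is never a pair of opposite vertices. So \Cref{conteo.local.C4} gives a contribution of~2 from $v$, and by \eqref{deg.como.suma} we get $\deg(B_j)=2+2=4$.

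Every member of $\mathcal{H}_v$ contains $v$ and three vertices of $C$, and $\mathcal{H}_v\neq\varnothing$. Therefore:
\begin{itemize}
\item every vertex of $B_j$ is active;
\item $A_4(B_j)$ contains the spanning cycle edges coming from $C$ plus edges joining $v$ to $C$, so it is connected, and $B_j$ is indecomposable by \Cref{indecomp.characterization}.
\end{itemize}
So $B_j$ is prime, and it is non-split because $C_4\prec B_j$.

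For the complements, note that a $4$-set induces $P_4$, $C_4$ or $2K_2$ in $G$ exactly when it induces $P_4$, $2K_2$ or $C_4$ in $\overline{G}$. Thus $\deg$ is preserved by \eqref{eq18}, and so are activity and the $A_4$-structure, hence primality. Also $\overline{B_j}$ contains $2K_2$, so it is non-split.

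\emph{Every prime non-split graph of degree~4 is one of them.} Let $G$ be such a graph. By the characterization of split graphs recalled in the proof of \Cref{5>degG=|{P_4}|.implica.G.split}, $G$ contains an induced $C_4$, $2K_2$ or $C_5$. The case $C_5$ is impossible, since it would force $\deg(G)\ge 5$. Replacing $G$ by $\overline{G}$ if necessary (which preserves degree and primality, as above), we may assume $C\prec G$ with $C\approx C_4$.

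Now $G\neq C$, since $\deg(C)=2$. If $V(C)$ were a module, \Cref{C4.modulo.deg>=6} would give $\deg(G)\ge 6$, a contradiction. So some vertex $v$ distinguishes $V(C)$. By \Cref{C4.distinguidores}, $v$ is the only such vertex, and $N(v)\cap V(C)$ is not a pair of opposite vertices of $C$. Then \Cref{conteo.local.C4} says $v$ contributes exactly~2, so the right-hand side of \eqref{conteo.global.C4} equals $2+2=4=\deg(G)$. The equality case of \Cref{C4.distinguidores} then gives $\mathcal{Q}_G=\{C\}\cup\mathcal{H}_v$.

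Since $G$ is prime, every vertex is active and so lies in some member of $\mathcal{Q}_G$. Every such member is contained in $V(C)\cup\{v\}$, hence $V(G)=V(C)\cup\{v\}$. Finally, $N(v)\cap V(C)$ is a nonempty proper subset of $V(C)$ that is not an opposite pair. Up to a rotation or reflection of $C$, it is therefore $\{c_0\}$, $\{c_0,c_1\}$ or $\{c_0,c_1,c_2\}$, which means $G\approx B_1$, $B_2$ or $B_3$. Undoing the complementation yields the $\overline{B_j}$.

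The step needing most care is the equality case: it is what turns the counting bound into the structural conclusion that no member of $\mathcal{Q}_G$ avoids $V(C)\cup\{v\}$, and hence that $|G|=5$. This follows directly from \Cref{C4.distinguidores}, provided one checks that the contribution of $v$ is exactly~2 and not~4.
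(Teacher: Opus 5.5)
Your proof is correct and follows the paper's argument essentially step by step: the same local count around an induced $C_4$, the equality case of \Cref{C4.distinguidores}, \Cref{C4.modulo.deg>=6} to rule out a module, and the reduction by complementation. The only deviation is in showing that $B_j$ is indecomposable. You use connectivity of $A_4(B_j)$ via \Cref{indecomp.characterization}, whereas the paper notes that a decomposition $S\circ H$ would need two active factors of order at least $4$ each, forcing $|B_j|\geq 8$; both arguments work equally well.
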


\begin{proof}
		Fix $j\in[3]$ and write $C=B_j[c_0,c_1,c_2,c_3]$. As $|B_j|=5$, every
	member of $\mathcal{Q}_{B_j}$ either equals $C$ or contains $v$; hence
	$\mathcal{Q}_{B_j}=\{C\}\cup\mathcal{H}_v$, so $\deg(B_j)=2+2=4$ by
	\Cref{conteo.local.C4} and \eqref{deg.como.suma}, and $B_j$ is active.
	If $B_j=S\circ H$ with $V(S),V(H)\neq\varnothing$, then $S$ and $H$
	are active by \Cref{hechos.previos}, so $|S|,|H|\geq 4$ by
	\Cref{|G^*|} and $|B_j|\geq 8$, which is absurd; hence $B_j$ is prime.
	Moreover $B_j$ is not split, because $C\approx C_4$. Since
	complementation preserves the degree, primality and non-splitness,
	the same holds for $\overline{B_1},\overline{B_2},\overline{B_3}$.
	Finally, these six graphs are clearly pairwise non-isomorphic.
	
	Conversely, let $G$ be a prime graph with $\deg(G)=4$ which is not
	split. By \Cref{5>degG=|{P_4}|.implica.G.split} we have
	$\deg(G)\neq|\mathcal{Q}_G(P_4)|$, so $\mathcal{Q}_G(C_4)\cup \mathcal{Q}_G(2K_2)\neq\varnothing$ by
	\Cref{degreeofG}. Complementation preserves the degree, primality and
	non-splitness, and interchanges induced $C_4$'s with induced
	$2K_2$'s; as the six graphs of the statement form three complementary
	pairs, we may assume $\mathcal{Q}_G(C_4)\neq\varnothing$ and it suffices to
	prove that $G\approx B_j$ for some $j\in[3]$. Fix an induced $C=c_0c_1c_2c_3c_0\approx C_4$ in
	$G$ and put $M=V(C)$.
	
		By \Cref{C4.distinguidores}, at most one vertex $v_0\in V(G)-M$
	distinguishes $M$, and $N_{v_0}\cap M$ is not a pair of opposite
	vertices of $C$; by \Cref{conteo.local.C4},
	$\sum_{H\in\mathcal{H}_{v_0}}\deg(H)=2$.
	
	Suppose first that no such $v_0$ exists. Then no vertex of $V(G)-M$
	distinguishes $M$, so $M$ is a module of $G$; moreover $G\neq C$
	because $\deg(C)=2$. This contradicts \Cref{C4.modulo.deg>=6}. Hence $v_0$ exists, and \eqref{conteo.global.C4} becomes an equality,
	so $\mathcal{Q}_G=\{C\}\cup\mathcal{H}_{v_0}$. In particular every member
	of $\mathcal{Q}_G$ has all its vertices in $\{v_0\}\cup M$, and since $G$
	is active we get $V(G)=\{v_0\}\cup M$. Finally, after a rotation of
	$C$ we may assume $N_{v_0}\cap M=\{c_0,\dots,c_{j-1}\}$ for some
	$j\in[3]$, because $N_{v_0}\cap M$ is neither empty, nor $M$, nor a
	pair of opposite vertices of $C$. Therefore $G\approx B_j$.
\end{proof}

\begin{corollary}
	\label{clasificacion.primos.deg=4}
	There are exactly 18 prime graphs of degree $4$: the 12
	split graphs of
	\Cref{split.primos.deg=4.fig.1,split.primos.deg=4.fig.2,split.primos.deg=4.fig.3},
	together with the 6 non-split graphs of \Cref{no.split.primos.deg=4.fig}.
\end{corollary}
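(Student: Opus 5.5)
The corollary should follow by combining the two classification theorems already proved. The plan is to partition the prime graphs of degree $4$ according to whether they are split, and then to check that the two lists are disjoint and free of repetitions.

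First, let $G$ be a prime graph with $\deg(G)=4$. If $G$ is split, then \Cref{clasificacion.split.primos.deg=4} says $G$ is one of the $12$ graphs in \Cref{split.primos.deg=4.fig.1,split.primos.deg=4.fig.2,split.primos.deg=4.fig.3}. If $G$ is not split, then \Cref{clasificacion.no.split.primos.deg=4} says $G$ is one of $B_1,B_2,B_3,\overline{B_1},\overline{B_2},\overline{B_3}$. Conversely, both theorems assert that every listed graph is prime of degree $4$; for the non-split ones this is checked explicitly in the proof of \Cref{clasificacion.no.split.primos.deg=4}. So the union of the two lists is exactly the set of prime graphs of degree $4$.

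Next, the two lists cannot overlap, because one consists of split graphs and the other of non-split graphs. The six non-split graphs are pairwise non-isomorphic, as noted in the proof of \Cref{clasificacion.no.split.primos.deg=4}.

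The main point needing care is that the $12$ split graphs are pairwise non-isomorphic. I would use the invariant consisting of $(\omega,\alpha)$ together with the multiset of degrees of the vertices in $I$. This is a genuine isomorphism invariant because the bipartition of an active split graph is unique. The values are as follows:
\begin{itemize}
\item $D_{4,1}$ and $D_{2,2}$ both have $\alpha=2$, but their $I$-degrees are $\{4,1\}$ and $\{2,2\}$ respectively.
\item The six graphs $R_{311},R_{331},R_{321}$ and their complements have $\alpha=3$, $\omega=4$, and distinct $I$-degree multisets. In each complement the $I$-degrees are $\omega-d$ computed from the clique side.
\item $R_{211}$ and $\overline{R_{211}}$ have $\alpha=\omega=3$. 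They are distinguished by their $I$-degree multisets, $\{2,1,1\}$ versus $\{2,2,1\}$.
\item $\overline{D_{4,1}}$ and $\overline{D_{2,2}}$ have $\omega=2$, so they differ from all the others, and they differ from each other by their degrees.
\end{itemize}

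Adding up gives $12+6=18$.
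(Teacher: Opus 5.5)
Your argument is correct and matches how the paper obtains this corollary. The paper reads it off directly from \Cref{clasificacion.split.primos.deg=4,clasificacion.no.split.primos.deg=4}, with the split/non-split dichotomy making the two lists disjoint; your non-isomorphism check for the twelve split graphs spells out what the paper leaves implicit.

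One slip in that check: complementation swaps $K$ and $I$, so $\overline{R_{311}},\overline{R_{331}},\overline{R_{321}}$ have $\omega=3$ and $\alpha=4$, not $\alpha=3$ and $\omega=4$. Their $I$-degrees are $3-|N_x\cap I|$ over the four clique vertices $x$ of the original graph, giving $\{2,2,2,1\}$, $\{2,1,1,1\}$, $\{2,2,1,1\}$, against $\{3,1,1\}$, $\{3,3,1\}$, $\{3,2,1\}$ for $R_{311},R_{331},R_{321}$. With these corrected values your invariant still separates all twelve graphs.
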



\section{Consequences for realization graphs}
\label{sec_realization}

We now apply the previous classifications to unigraphs and realization
graphs; if $X$ has degree sequence $d$, we also write $\mathcal{G}(X)$ for
$\mathcal{G}(d)$. Following \cite{tyshkevich2000decomposition}, a degree
sequence is \emph{unigraphical} if all its realizations are isomorphic,
and a \emph{unigraph} is a realization of a unigraphical sequence;
following \cite{barrus2012switches}, a graph is a \emph{hereditary
	unigraph} if all its induced subgraphs are unigraphs, and it is
\emph{$\mathcal{F}$-free} if none of its induced subgraphs lies in the
family $\mathcal{F}$. Let
\[
\mathcal{F}=\bigl\{H,\overline{H}:\ H\in\{P_5,\ K_2\,\dot\cup\,K_3,\ B_1,\ 2P_3,\ K_2\,\dot\cup\,P_4,\ K_2\,\dot\cup\,C_4,\ \overline{T_{221}},\ R_{211}\}\bigr\}.
\]
In \cite{barrus2012switches} $\overline{T_{221}}$ and $R_{211}$ are called $R$ and
$S$ (see Figure~6 of
\cite{barrus2012switches}).

\begin{theorem}[\cite{barrus2012switches}, Theorem 4.2]
	\label{barrus.hereditarios}
	A graph is a hereditary unigraph if and only if it is $\mathcal{F}$-free.
\end{theorem}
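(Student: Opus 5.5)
The plan is to use the fact that being a hereditary unigraph is closed under induced subgraphs, so the statement amounts to saying that $\mathcal{F}$ is exactly the family of minimal non-unigraphs. The proof therefore has two directions. For necessity, I will check that no member of $\mathcal{F}$ is a unigraph. For sufficiency, I will show that every $\mathcal{F}$-free graph $G$ is a unigraph. Since $\mathcal{F}$-freeness is inherited by induced subgraphs, this gives that every induced subgraph of $G$ is a unigraph.

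\emph{Necessity.} $\mathcal{F}$ is closed under complementation, and $X$ is a unigraph if and only if $\overline{X}$ is. So it suffices to treat the eight graphs $P_5$, $K_2\,\dot\cup\,K_3$, $B_1$, $2P_3$, $K_2\,\dot\cup\,P_4$, $K_2\,\dot\cup\,C_4$, $\overline{T_{221}}$, $R_{211}$. For each one I will exhibit a 2-switch whose result is non-isomorphic, detected by an invariant such as connectivity, number of components, or triangle count. For example:
\begin{itemize}
\item on $P_5$, a 2-switch on its two end edges yields $K_3\,\dot\cup\,K_2$, which is disconnected;
\item on $K_2\,\dot\cup\,C_4$, a 2-switch merging the two components gives a $C_6$, which is connected;
\item on $R_{211}$, the two induced $P_4$'s through the degree-1 pendants produce graphs whose pendant vertices are attached differently.
\end{itemize}
These checks are routine.

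\emph{Sufficiency.} I will use the two key facts from \cite{tyshkevich2000decomposition}. First, a graph is a unigraph if and only if every factor of its Tyshkevich decomposition is a unigraph, because realizations of $d$ correspond factor-by-factor to realizations of the component sequences. Second, there is an explicit list of indecomposable unigraphs. The non-split ones are $C_5$, $mK_2$, $U_2(m,s)=mK_2\,\dot\cup\,K_{1,s}$, $U_3(m)$ (a $C_4$ together with $m$ triangles sharing a vertex), and their complements. The split ones are $K_1$, $S_2$ (stars $K_{1,s}$ glued to a clique, generalizing $D_{p,q}$), $S_3$, $S_4$ and their complements. Since every induced subgraph of an $\mathcal{F}$-free graph is $\mathcal{F}$-free, and each Tyshkevich factor is an induced subgraph, it suffices to prove that every indecomposable $\mathcal{F}$-free graph appears in this list.

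For non-split indecomposable $G$, the graph contains $C_4$, $2K_2$ or $C_5$. If $C_5\prec G$, excluding $P_5$, $\overline{P_5}$ and $B_1$-type attachments forces $G=C_5$. If $C_4\prec G$, the argument is a module analysis in the spirit of \Cref{C4.distinguidores}. Vertices outside the $C_4$ cannot distinguish it partially, since that would create $B_1$, $\overline{B_1}$, $P_5$ or $K_2\,\dot\cup\,C_4$. The remaining structure is then forced into $U_3(m)$ or a complement of $U_2$. The $2K_2$ case is complementary. For split indecomposable $G$ I will work with the factor graph $\Phi(G)$, or equivalently with the neighborhoods of $I$ in $K$. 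Excluding $\overline{T_{221}}$, $R_{211}$, their complements, and $2P_3$-type configurations forces the neighborhoods of $I$ to be pairwise disjoint with equal sizes, or the complement of that, which gives the $S_2,S_3,S_4$ families.

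The main obstacle is this last step: proving that every indecomposable split $\mathcal{F}$-free graph is one of Tyshkevich's split unigraphs. My first approach is induction on $|I|$. I will add an $I$-vertex and argue, using \Cref{prop.basicas.sigma_uv} on the pairs $\sigma_{uv}$, that any neighborhood pattern other than ``disjoint equal blocks'' produces an induced $R_{211}$ or $\overline{T_{221}}$ on three $I$-vertices.
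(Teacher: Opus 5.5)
The paper gives no proof of this statement; it quotes it from Barrus. So your proposal has to stand on its own. The overall architecture is reasonable: necessity by exhibiting non-isomorphic realizations, and sufficiency by reducing to Tyshkevich factors and Tyshkevich's list of indecomposable unigraphs. Necessity is fine apart from one slip: the 2-switch merging the components of $K_2\dot\cup C_4$ gives $P_6$, not $C_6$, since degrees are preserved. $P_6$ is still connected, so the argument survives. The real problem is sufficiency, which carries all the content of the theorem. Both structural claims you make about indecomposable $\mathcal{F}$-free graphs are false, so the case analysis cannot be carried out as planned.

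\emph{The $C_4$ case.} It is not true that no outside vertex distinguishes an induced $C_4$. A vertex with one neighbor, two adjacent neighbors, or two opposite neighbors on $C$ produces $B_1$, $\overline{P_5}$ or $K_{2,3}=\overline{K_2\dot\cup K_3}$, respectively. None of the graphs you list ($P_5$, $\overline{B_1}$, $K_2\dot\cup C_4$) can arise from one vertex plus a $C_4$. With three neighbors, however, you get $B_3$. This graph is $\mathcal{F}$-free: it has seven edges, whereas the five-vertex members of $\mathcal{F}$ have four, five or six. It is also prime by \Cref{clasificacion.no.split.primos.deg=4}. In fact $B_3=\overline{K_2\dot\cup P_3}=\overline{U_2(1,2)}$. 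More generally, in every $\overline{U_2(m,s)}$ a leaf of the star has exactly three neighbors on the $C_4$ induced by a copy of $K_2$ together with the center and another leaf. So the configuration you exclude is exactly the one that produces part of your target list.

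Conversely, $U_3(m)$ as you describe it is never $\mathcal{F}$-free: a non-shared vertex of a triangle, together with the $C_4$, induces $B_1$. The $C_5$ case is also incomplete. You additionally need $\overline{B_1}$ (for three consecutive neighbors) and $K_2\dot\cup P_4$ with its complement (for pairs of vertices complete or anticomplete to the $C_5$). Only then does indecomposability give $G=C_5$.

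\emph{The split case.} The dichotomy ``$I$-neighborhoods pairwise disjoint of equal size, or this holds in the complement'' fails. Let $H$ have clique $\{k_1,k_2,k_3,k_4\}$, independent set $\{x,y,z\}$, and $N_x=\{k_1,k_2\}$, $N_y=\{k_3\}$, $N_z=\{k_4\}$.
\begin{itemize}
\item $H$ is prime. The induced $P_4$'s $xk_ik_3y$ and $xk_ik_4z$ ($i=1,2$), together with $yk_3k_4z$, cover $V(H)$, so $H$ is active. Also $\Phi(H)$ is connected, so $H$ is prime by \Cref{S.primo.iff.Phi(S).conexo}.
\item $H$ is $\mathcal{F}$-free. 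Being split, $H$ could only contain the split members $T_{221},\overline{T_{221}},R_{211},\overline{R_{211}}$ of $\mathcal{F}$. Their degree sequences are $(4,3,3,2,1,1)$ and $(4,4,3,2,2,1)$. The vertex-deleted subgraphs of $H$ have degree sequences $(4,4,3,3,1,1)$, $(4,4,4,3,2,1)$, $(3,3,3,1,1,1)$ and $(3,3,3,2,1,0)$, none of which match.
\item $H$ does not have your claimed shape. Its $I$-neighborhoods are disjoint but of sizes $2,1,1$. Those of $\overline{H}$ are $\{y,z\},\{y,z\},\{x,z\},\{x,y\}$, which are neither equal nor disjoint.
\end{itemize}
So your induction step (``any other pattern yields $R_{211}$ or $\overline{T_{221}}$ on three $I$-vertices'') is false already for $|I|=3$.

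Note that $H$ arises from a star-type graph ($K_{1,2}$ and two copies of $K_{1,1}$, with their centers made into a triangle) by exchanging the roles of clique and independent set. Indecomposable split unigraphs are closed under this inversion as well as under complementation. Both the list you quote and the shape you try to force must account for it. To repair the proof you need an actual classification of the $\mathcal{F}$-free prime split graphs, and a separate treatment of vertices with three neighbors on an induced $C_4$.
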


If $H\prec G$, then $\mathcal{Q}_H\subseteq\mathcal{Q}_G$, so
\eqref{deg.como.suma} gives
\begin{equation}
	\label{deg.monotono}
	H\prec G\ \Longrightarrow\ \deg(H)\leq\deg(G).
\end{equation}

\begin{theorem}
	\label{grado<=3.unigrafo}
	Let $\deg(G)\leq 3$. Then $G$ is a unigraph if and only if it is a
	hereditary unigraph, if and only if it contains neither $T_{221}$ nor
	$\overline{T_{221}}$ as an induced subgraph; and otherwise some
	2-switch $\tau$ on $G$ satisfies $\deg(\tau(G))\geq 4$. In particular,
	every graph of degree at most $2$ is a hereditary unigraph.
\end{theorem}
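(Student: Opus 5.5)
We prove the cycle of implications: hereditary unigraph $\Rightarrow$ unigraph $\Rightarrow$ no induced $T_{221}$ or $\overline{T_{221}}$ $\Rightarrow$ hereditary unigraph. The first implication is trivial.

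For the last implication we use \Cref{barrus.hereditarios}. It suffices to show that, when $\deg(G)\le 3$ and $T_{221},\overline{T_{221}}\not\prec G$, no member of $\mathcal F$ is an induced subgraph of $G$. By \eqref{deg.monotono} and complementation, every member $F\in\mathcal F$ other than $T_{221}$ and $\overline{T_{221}}$ should satisfy $\deg(F)\ge 4$; note that $\overline{T_{221}}$ itself has degree $3$. We verify this with \Cref{degreeofG}, counting induced $P_4$, $C_4$, $2K_2$:
\begin{itemize}
\item $P_5$ contains two $P_4$'s and $2K_2$'s (edges $12,45$; $12,34$; $23,45$), so its degree is at least $4$.
\item $K_2\dot\cup K_3$ contains three $2K_2$'s, so its degree is at least $6$.
\item $B_1$ has degree $4$ by \Cref{clasificacion.no.split.primos.deg=4}.
\item $2P_3$ contains four $2K_2$'s.
\item $K_2\dot\cup P_4$ contains $P_4$ together with several $2K_2$'s.
\item $K_2\dot\cup C_4$ contains $C_4$ together with $2K_2$'s.
\item $R_{211}$ has degree $4$ by \Cref{clasificacion.split.primos.deg=4}.
\end{itemize}
The complements have the same degree. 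Hence $G$ is $\mathcal F$-free, and therefore a hereditary unigraph.

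For the middle implication, and for the final clause, suppose $T_{221}\prec G$; the case $\overline{T_{221}}$ follows by complementation, since $\deg$ and 2-switches commute with complementation. Since $\deg(T_{221})=3\ge\deg(G)$, the equality case of \eqref{deg.como.suma} gives $\mathcal Q_G=\mathcal Q_{T_{221}}$. Let $W=V(T_{221})$. Every 2-switch of $G$ acts inside $W$, and the vertices outside $W$ are inactive.

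Next we show that $W$ behaves like a Tyshkevich factor. By \Cref{indecomp.characterization}, $A_4(G)$ has $W$ as a component, and the remaining vertices are isolated in $A_4(G)$. So $G$ decomposes with a prime factor $T_{221}$ and all other factors inactive, that is, of degree $0$. By \Cref{hechos.previos}(1), a 2-switch $\tau$ performed inside $T_{221}$ yields $\tau(G)$ whose decomposition replaces that factor by $\tau(T_{221})$, and $\deg(\tau(G))=\deg(\tau(T_{221}))$. This is the step most in need of care: we must check that a 2-switch inside one factor changes only that factor in the composition. It does, because the composition edges depend only on the $K$/$I$ sides, and degrees are preserved, so $\tau(T_{221})$ remains split with the same bipartition.

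It therefore suffices to exhibit a 2-switch $\tau$ on $T_{221}$ with $\deg(\tau(T_{221}))\ge4$. We compute this directly with the parameters of \Cref{S_activo_|I|=3}. In $T_{221}$ we have $\pi_b=\kappa_b=\kappa_a=1$ and the others $0$. A 2-switch along the $ab$ edge of $\Phi$ moves a $K$-vertex from $N_a$ to $N_b$ and rebalances the parameters. We choose $\tau$ so that the result is $R_{211}$ or another degree-$4$ graph, and check $\deg\ge4$ via $\sigma$ from \Cref{S_activo_|I|=3}(2); for instance, the result can have all three $\sigma$'s nonzero.

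It remains to obtain the non-unigraph conclusion. Such a $\tau(G)$ is not isomorphic to $G$, because $\deg$ is an isomorphism invariant. Thus $G$ is not a unigraph, which gives the middle implication by contraposition. The final clause follows since no graph of degree at most $2$ contains $T_{221}$, whose degree is $3$.
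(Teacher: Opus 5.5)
Your overall plan matches the paper's. For the implication from $T_{221},\overline{T_{221}}\not\prec G$ to hereditary unigraph, you check that every other member of $\mathcal{F}$ has degree at least $4$ and then apply \eqref{deg.monotono} and \Cref{barrus.hereditarios}. For the converse, you find a 2-switch on $T_{221}$ that raises the degree and lift it to $G$.

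\textbf{The gap.} You never produce that 2-switch on $T_{221}$, and it is the whole content of the converse. Both hints you give are also wrong.
\begin{itemize}
\item The image cannot be $R_{211}$. $R_{211}$ has $7$ edges and degree sequence $(4,3,3,2,1,1)$, while $T_{221}$ has $8$ edges and $(4,4,3,2,2,1)$, so no 2-switch connects them.
\item No image has all three $\sigma$'s nonzero. Let $p,q,r$ be the $K$-vertices counted by $\pi_b,\kappa_b,\kappa_a$. Then $N_a=\{q\}$, $N_b=\{p,r\}$, $N_c=\{q,r\}$. The only 2-switches are on the induced $P_4$'s $a\,q\,p\,b$, $a\,q\,r\,b$ and $b\,p\,q\,c$. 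They give, respectively, $N_b=N_c$, $N_a\subseteq N_c$ and $N_a\subseteq N_b$, so each image has a vanishing $\sigma$.
\end{itemize}

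\textbf{The fix.} Use the first switch: replace $aq,bp$ by $ap,bq$. This gives $N_a=\{p\}$ and $N_b=N_c=\{q,r\}$. By \Cref{prop.basicas.sigma_uv}, $\sigma_{ab}=\sigma_{ac}=2$ and $\sigma_{bc}=0$. So the image has degree $4$ and is $\overline{R_{211}}$, as the paper states. The other two switches return copies of $T_{221}$, of degree $3$.

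\textbf{Lifting to $G$.} Your route through the Tyshkevich decomposition is valid but unnecessary. A 2-switch on four vertices of $W$ changes only edges inside $W$. Hence $\tau(T_{221})=\tau(G)[W]\prec\tau(G)$, and \eqref{deg.monotono} gives $\deg(\tau(G))\ge 4$ at once. This is the paper's argument. It needs neither $\mathcal{Q}_G=\mathcal{Q}_{T_{221}}$, nor \Cref{indecomp.characterization}, nor the check that switching inside a factor commutes with $\circ$.

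If you keep your route, the reason the bipartition survives is not just that degrees are preserved. It is that a 2-switch on an induced $P_4$ $u\,k_1\,k_2\,v$ of a split graph keeps $k_1k_2$ and only exchanges the two $K$--$I$ edges.

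\textbf{Minor: the $P_5$ count.} The sets $\{1,2,3,4\}$ and $\{2,3,4,5\}$ induce $P_4$'s, not $2K_2$'s. Only $\{1,2,4,5\}$ induces a $2K_2$, so $\deg(P_5)=4$ exactly, and your conclusion survives.
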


\begin{proof}
	As $\deg(\overline{H})=\deg(H)$ for every $H$ \cite{pastine20252}, the
	members of $\mathcal{F}$ other than $T_{221},\overline{T_{221}}$ have
	degree at least $4$. Hence, if $T_{221},\overline{T_{221}}\not\prec G$,
	then $G$ is $\mathcal{F}$-free by \eqref{deg.monotono}, so it is a
	hereditary unigraph by \Cref{barrus.hereditarios}, and in particular a
	unigraph; the same argument proves the last assertion, because
	\eqref{deg.monotono} forbids $T_{221},\overline{T_{221}}$ inside a
	graph of degree at most $2$.
	
	Assume now $T_{221}\prec G$ or $\overline{T_{221}}\prec G$. A 2-switch
	on $\overline{G}$ is a 2-switch on $G$ on the same 4 vertices, the
	resulting graphs being complementary, so we may assume
	$T_{221}\prec G$. It is easy to check that there is a 2-switch $\tau$ on $T_{221}$ such that $\tau(T_{221})\approx \overline{R_{211}}$. Since $\tau(T_{221})\prec\tau(G)$, we have 
	$\deg(\tau(G))\geq 4$ by
	\eqref{deg.monotono}, and since $\tau(G)$ and $G$ share their degree
	sequence, $G$ is not a unigraph.
\end{proof}

\begin{corollary}
	\label{G(d).transitivo}
	Let $X$ realize $d$, with $\deg(X)=k\leq 3$. Then $\mathcal{G}(d)$ is regular if and
	only if $T_{221},\overline{T_{221}}\not\prec X$, and in that case $d$ is unigraphical,
	$\mathcal{G}(d)$ is vertex-transitive and $k$-regular, and $\tau(X)\approx X$ for every
	2-switch $\tau$ on $X$.
\end{corollary}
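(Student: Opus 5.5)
The plan is to derive the corollary from \Cref{grado<=3.unigrafo}, treating the two directions separately.

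Suppose first that $T_{221},\overline{T_{221}}\not\prec X$. Then \Cref{grado<=3.unigrafo} makes $X$ a unigraph, so $d$ is unigraphical and every realization $Y$ of $d$ satisfies $Y\approx X$. In particular $\tau(X)\approx X$ for every 2-switch $\tau$ on $X$, since $\tau(X)$ realizes $d$.

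Vertex-transitivity of $\mathcal{G}(d)$ comes from relabeling. Given realizations $Y,Y'$ on the fixed vertex set $V$, choose an isomorphism $\varphi\colon Y\to Y'$; it is a permutation of $V$ preserving $d$ because degrees are preserved. The induced map $Z\mapsto\varphi(Z)$ is a bijection of the realizations of $d$. It carries 2-switches to 2-switches: if $Z'$ arises from $Z$ by a 2-switch on $\{ab,xy\}$, then $\varphi(Z')$ arises from $\varphi(Z)$ by the 2-switch on $\{\varphi(a)\varphi(b),\varphi(x)\varphi(y)\}$. So it is an automorphism of $\mathcal{G}(d)$ sending $Y$ to $Y'$, and $\mathcal{G}(d)$ is vertex-transitive. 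Each realization is isomorphic to $X$, and $\deg$ is an isomorphism invariant equal to the degree in $\mathcal{G}(d)$, so every vertex has degree $k$. Hence $\mathcal{G}(d)$ is $k$-regular, and in particular regular.

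For the converse, suppose $T_{221}\prec X$ or $\overline{T_{221}}\prec X$. \Cref{grado<=3.unigrafo} gives a 2-switch $\tau$ with $\deg(\tau(X))\ge 4>3\ge k=\deg(X)$. Both $X$ and $\tau(X)$ are vertices of $\mathcal{G}(d)$, and their degrees there are $\deg(X)$ and $\deg(\tau(X))$. These differ, so $\mathcal{G}(d)$ is not regular. This proves the equivalence, and the additional conclusions were obtained in the first direction.

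No step here is a real obstacle. The only point needing care is checking that a degree-preserving relabeling of $V$ maps edges of $\mathcal{G}(d)$ to edges; it does, because being a 2-switch is defined purely in terms of which pairs are adjacent.
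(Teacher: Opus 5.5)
Your proof is correct and follows the paper's route. You get unigraphicity from \Cref{grado<=3.unigrafo}, vertex-transitivity from degree-preserving relabelings acting as automorphisms of $\mathcal{G}(d)$, and non-regularity in the converse from the 2-switch $\tau$ with $\deg(\tau(X))\geq 4>k$. The differences are cosmetic: you use the isomorphism $Y\to Y'$ directly instead of the paper's group $\Gamma$ of degree-preserving permutations, and you write out the converse direction, which the paper leaves implicit.
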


\begin{proof}
	By \Cref{grado<=3.unigrafo}, $d$ is unigraphical. Let $\Gamma$ be the
	group of permutations $\pi$ of $V(X)$ with $d_{\pi(v)}=d_v$ for all
	$v$. Relabeling by $\pi\in\Gamma$ sends realizations of $d$ to
	realizations of $d$ and preserves the relation of differing by a
	2-switch, so $\Gamma$ acts on $\mathcal{G}(d)$ by automorphisms; and
	any isomorphism between realizations of $d$ preserves degrees, hence
	lies in $\Gamma$. So $\Gamma$ is transitive, $\mathcal{G}(d)$ is
	vertex-transitive, and it is $k$-regular because $X$ has degree $k$ in
	it. The last assertion follows from \Cref{grado<=3.unigrafo}.
\end{proof}

Thus $\mathcal{G}(T_{221})\approx K_{1,2,2}$: four of its vertices are
isomorphic to $T_{221}$ and have degree $3$, while the fifth is
isomorphic to $\overline{R_{211}}$ and has degree $4$; so $T_{221}$ and
$\overline{R_{211}}$ are the two realizations of $d(T_{221})$ up to
isomorphism. This is the first instance of a phenomenon occurring in
every degree $k\geq 3$.

\begin{proposition}
	\label{familia.Gk}
	For every $k\geq 3$ there are a prime graph $G_k$ with $\deg(G_k)=k$
	and a 2-switch $\tau$ on $G_k$ with $\deg(\tau(G_k))=2k-2$; in
	particular, $\mathcal{G}(G_k)$ is not regular.
\end{proposition}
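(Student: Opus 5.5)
The natural plan is to generalize the $k=3$ example just described, where $T_{221}$ has degree~3 and a 2-switch turns it into $\overline{R_{211}}$ of degree $4=2\cdot3-2$. Both graphs are split with $|I|=3$, so I will work inside the parametrization of \Cref{S_activo_|I|=3}. In its notation (with $\pi_a=\kappa_c=0$ as in part~(4)), $T_{221}$ has $(\pi_b,\pi_c,\kappa_a,\kappa_b)=(1,0,1,1)$ and $\overline{R_{211}}$ has $(1,0,0,2)$. For $k\ge3$ I would take $G_k$ to be the split graph with parameters $(\pi_b,\pi_c,\kappa_a,\kappa_b)=(1,0,k-2,1)$. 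That is, $K$ consists of $k-2$ twin vertices adjacent to $\{b,c\}$, one vertex $z$ adjacent to $\{a,c\}$, and one vertex $y$ adjacent only to $b$. Then \Cref{S_activo_|I|=3}(4) gives $\sigma_{ab}=\kappa_b(\pi_b+\kappa_a)=k-1$ and $\sigma_{bc}=\pi_b(\pi_c+\kappa_b)=1$, so $\deg(G_k)=|E(\Phi)|=k$ by \Cref{degreeofG}.

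To see that $G_k$ is prime, I first check that it is active. No $K$-vertex is universal or isolated from $I$, and each $K$-vertex lies in an explicit induced $P_4$. For instance, for $x$ in the $\kappa_a$-class, $axz$ is not a path since $a\not\sim x$; the path $b\,x\,z\,a$ is an induced $P_4$ because $bz,xa,ab\notin E$. The vertex $y$ lies in $y\,b\ldots$ type paths, and $z$ in $a\,z\,x\,b$; this is a routine check. The $I$-vertices all have nonzero $\Phi$-degree. Primality then follows from \Cref{S.primo.iff.Phi(S).conexo}, because $\Phi$ is the path $abc$ with positive multiplicities and hence connected.

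Next comes the 2-switch $\tau$. I aim for $\tau(G_k)$ to be the split graph with parameters $(1,0,k-3,2)$, possibly after renaming $a,b,c$. For that graph, \Cref{S_activo_|I|=3}(4) gives $\sigma_{ab}=2(1+k-3)=2k-4$ and $\sigma_{bc}=1\cdot2=2$, so its degree is $2k-2$; for $k=3$ it is exactly $\overline{R_{211}}$. The candidate switch acts on two $I$--$K$ edges, one of them $xb$ with $x$ in the $\kappa_a$-class, chosen so that $x$ becomes a vertex adjacent to $\{a,c\}$ while the compensating vertex changes type in a way that preserves the multiset of parameters up to relabeling. I would first find the switch explicitly for $k=3$ (from $T_{221}$ to $\overline{R_{211}}$), and then observe that the $k-3$ additional twins of $x$ are untouched by $\tau$. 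Because those twins form a module, $\tau$ extends verbatim to $G_k$. I would then recompute the types of all $K$-vertices in $\tau(G_k)$ and read off $\deg(\tau(G_k))$ from \Cref{S_activo_|I|=3}. A fallback is to count induced $P_4$'s directly, which is legitimate since $\tau(G_k)$ is still split when the switch uses only $I$--$K$ edges.

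The main obstacle is pinning down this switch so that the resulting parameters are exactly $(1,0,k-3,2)$ up to a permutation of $I$, since degree preservation forces two $K$-vertices to change type simultaneously. If the $(1,0,k-3,2)$ target fails, I would instead compute $\deg(\tau(G_k))$ for each of the few 2-switches available in $G_k$ and select the one whose value comes out to $2k-2$. Finally, since $\tau(G_k)$ and $G_k$ are both realizations of the same sequence and have degrees $2k-2\ne k$ when $k\ge3$, $\mathcal{G}(G_k)$ is not regular.
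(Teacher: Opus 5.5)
\textbf{There is a genuine gap, and it sits in the 2-switch step, which is the actual content of the proposition.}

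Your graph $G_k$ is fine. It has parameters $(\pi_b,\pi_c,\kappa_a,\kappa_b)=(1,0,k-2,1)$, that is, $N_a=\{z\}$, $N_b=\{y\}\cup X$, $N_c=\{z\}\cup X$ with $|X|=k-2$. It has degree $k$ and is active and prime; for $k=4$ it is $R_{331}$.

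The problem is that your target $(1,0,k-3,2)$ cannot be reached from $G_k$. In $G_k$ the vertex $a$ has degree $1$. In the target, the $I$-vertices have degrees $2,k-2,k-1$ and the $K$-vertices have degrees $k$ or $k+1$. So for $k\ge 4$ the target has no vertex of degree $1$, and no 2-switch (indeed no sequence of them) produces it.

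The candidate switch you describe also fails. The only induced $P_4$ of $G_k$ with $xb$ as an end edge is $b\,x\,z\,a$. The corresponding 2-switch replaces $xb,az$ by $xa,zb$, which merely interchanges the types of $x$ and $z$. The parameters remain $(1,0,k-2,1)$, so $\tau(G_k)\approx G_k$ and the degree is still $k$.

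The error comes from mis-tracking the twins. In the switch $T_{221}\to\overline{R_{211}}$ the vertex $x$ is not involved at all, and it lands in the class of size $2$ of $\overline{R_{211}}$. Hence its extra twins join that class, not the $\kappa_a$-class.

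Your fallback of trying every switch would succeed, but as written it is a plan, not a proof. The switch that works is the one on the induced $P_4$ $a\,z\,y\,b$: replace $az,by$ with $ay,bz$. Afterwards $N_a=\{y\}$ and $N_b=N_c=X\cup\{z\}$. By \Cref{prop.basicas.sigma_uv} this gives $\sigma_{ab}=\sigma_{ac}=k-1$ and $\sigma_{bc}=0$, so $\deg(\tau(G_k))=2k-2$. After exchanging the names of $a$ and $b$, this is the parameter vector $(1,0,0,k-1)$, which for $k=3$ is indeed $\overline{R_{211}}$.

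With this correction your argument becomes a valid alternative to the paper's. The paper uses the mirror family $N_a=\{x\}$, $N_b=\{y_1,\dots,y_{k-1}\}$, $N_c=\{y_1\}$, which generalizes $\overline{T_{221}}$ (with $G_4\approx R_{311}$), and its switch makes $N_a=N_c=\{y_1\}$. Yours generalizes $T_{221}$, and the corrected switch makes $N_b=N_c$.
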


\begin{proof}
	Let $G_k$ be the split graph with clique $K=\{x,y_1,\dots,y_{k-1}\}$,
	independent set $I=\{a,b,c\}$, and $N_a=\{x\}$,
	$N_b=\{y_1,\dots,y_{k-1}\}$, $N_c=\{y_1\}$. By
	\Cref{prop.basicas.sigma_uv}, $\sigma_{ab}=k-1$, $\sigma_{ac}=1$ and
	$\sigma_{bc}=0$, so $\deg(G_k)=k$ and $\Phi(G_k)=cab$ ignoring
	multiplicities. Every vertex of $K\cup\{a,b\}$ lies in an induced $P_4$
	$axy_ib$, and $c$ in the induced $P_4$ $axy_1c$; hence $G_k$ is active,
	and prime by \Cref{S.primo.iff.Phi(S).conexo}. Let $\tau$ replace
	$ax,by_1$ with $ay_1,bx$. In $\tau(G_k)$ we have $N_a=N_c=\{y_1\}$ and
	$N_b=\{x,y_2,\dots,y_{k-1}\}$, so $\sigma_{ab}=k-1=\sigma_{cb}$ and
	$\sigma_{ac}=0$, whence $\deg(\tau(G_k))=2k-2\neq k$.
\end{proof}

Note that $G_3\approx\overline{T_{221}}$ and $G_4\approx R_{311}$ (see
\Cref{grafos.primos.deg=3,split.primos.deg=4.fig.2}). In \Cref{R.k-1.1.1}, $G_5$ and $\tau(G_5)$ are shown.

\begin{figure}[h]
	\centering
	\begin{tikzpicture}[scale=0.8,
			every node/.style={draw, circle, fill=white, inner sep=0pt, minimum size=6.5pt},
			lbl/.style={draw=none, fill=none},
			sw/.style={thick},
			]
			
			\begin{scope}[shift={(0,0)}]
					\node (A1)[fill=gray] at (0,2.7) {};
					\node (A2)[fill=gray] at (150:1.4) {};
					\node (A3) at (110:1.4) {};
					\node (A4) at (70:1.4) {};
					\node (A5) at (30:1.4) {};
					\foreach \i in {1,...,5}
					\foreach \j in {1,...,5}
					{\ifnum\i<\j \draw (A\i)--(A\j);\fi}
					\node (Aa)[fill=gray] at (-0.85,2.7) {};
					\node (Ab)[fill=gray] at (0,0) {};
					\node (Ac) at (-2.5,1.15) {};
					\draw (Ac)--(A2);
					\draw[sw] (Aa)--(A1);
					\draw[sw] (Ab)--(A2);
					\draw (Ab)--(A3); \draw (Ab)--(A4); \draw (Ab)--(A5);
					\node[lbl] at (-1.3,2.7) {$a$};
					\node[lbl] at (0,-0.5) {$b$};
					\node[lbl] at (-2.98,1.15) {$c$};
					\node[lbl] at (0.38,2.78) {$x$};
					\node[lbl] at (-1.5,0.4) {$y_1$};
					\node[lbl] at (-1.5,-0.5) {$G_5$};
				\end{scope}
			
			\begin{scope}[shift={(6.6,0)}]
					\node (B1)[fill=gray] at (0,2.7) {};
					\node (B2)[fill=gray] at (150:1.4) {};
					\node (B3) at (110:1.4) {};
					\node (B4) at (70:1.4) {};
					\node (B5) at (30:1.4) {};
					\foreach \i in {1,...,5}
					\foreach \j in {1,...,5}
					{\ifnum\i<\j \draw (B\i)--(B\j);\fi}
					\node (Ba)[fill=gray] at (-0.85,2.7) {};
					\node (Bb)[fill=gray] at (0,0) {};
					\node (Bc) at (-2.5,1.15) {};
					\draw (Bc)--(B2);
					\draw[sw] (Ba)--(B2);
					\draw[sw] (Bb) to (B1);
					\draw (Bb)--(B3); \draw (Bb)--(B4); \draw (Bb)--(B5);
					\node[lbl] at (-1.3,2.7) {$a$};
					\node[lbl] at (0,-0.5) {$b$};
					\node[lbl] at (-2.98,1.15) {$c$};
					\node[lbl] at (0.38,2.78) {$x$};
					\node[lbl] at (-1.5,0.4) {$y_1$};
					\node[lbl] at (1.5,-0.5) {$\tau(G_5)$};
				\end{scope}
			
			\draw[-{Latex[length=2.2mm]}, thick] (1.85,1.35) -- (3.25,1.35);
			\node[lbl] at (2.55,1.72) {$\tau$};

		\end{tikzpicture}
	\caption{The prime split graph $G_5$ and
			its image under the 2-switch $\tau$, which replaces $ax,by_1$ with
			$ay_1,bx$.}
	\label{R.k-1.1.1}
\end{figure}

We now turn to the realization graphs themselves. The first result below
reduces their study to prime graphs; in \cite{barrus2016realization} it is
stated for the canonical decomposition of $d$, whose components are the
degree sequences of $G_n,\dots,G_1$.

\begin{theorem}[\cite{barrus2016realization}, Theorem 6]
	\label{G.producto}
	If $G_n\circ\cdots\circ G_1$ is the Tyshkevich decomposition of $X$,
	then
	$\mathcal{G}(X)\approx\mathcal{G}(G_n)\,\square\cdots\square\,\mathcal{G}(G_1)$.
\end{theorem}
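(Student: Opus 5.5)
The plan is to prove the statement for a composition of two graphs, $X=(S,K,I)\circ G$, and then finish by induction on $n$, using that the Cartesian product is associative. The central idea is to avoid any degree-counting argument for how realizations split. Instead I will use that 2-switches never cross factors, together with the connectivity of $\mathcal{G}(d)$. Throughout I fix the labeled vertex sets $V(S)=K\cup I$ and $V(G)$.

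\emph{Step 1: a 2-switch on one factor is a 2-switch on the composition.} Let $S'$ be a realization of $d(S)$ on $V(S)$ that keeps $K$ as a clique and $I$ as an independent set. Let $G'$ be a realization of $d(G)$ on $V(G)$. Then $S'\circ G'$ is a realization of $d(X)$, because every vertex of $K$ gains $|G|$ cross edges and no other vertex gains any. The map $(S',G')\mapsto S'\circ G'$ is injective, since each factor is recovered by restricting to its vertex set. The cross edges are all pairs $K\times V(G)$ and are identical in every such graph. Hence a 2-switch on $S'$ or on $G'$ is a 2-switch on $S'\circ G'$, and conversely.

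\emph{Step 2: every 2-switch on a composition stays inside a factor and preserves the split structure.} By \Cref{indecomp.characterization}, the four vertices of any 2-switch on $S'\circ G'$ lie inside a single factor. If they lie in $V(G')$, the switch is a 2-switch on $G'$. If they lie in $V(S')$, they induce a $P_4$ of the form $a x y b$ with $a,b\in I$ and $x,y\in K$. The switch removes $ax$ and $yb$ and adds $ay$ and $xb$. So $K$ stays a clique and $I$ stays independent. In both cases the result is again $S''\circ G''$ with the same labeled partition. Here $S''$ and $G''$ are realizations of $d(S)$ and $d(G)$, and exactly one coordinate has changed, by a single 2-switch.

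\emph{Step 3: every realization of $d(X)$ is a composition, and the map is an isomorphism.} Since $\mathcal{G}(d)$ is connected, every realization of $d(X)$ is reached from $X$ by a sequence of 2-switches. By Step 2, all of them therefore have the form $S'\circ G'$. By Step 1, the map $(S',G')\mapsto S'\circ G'$ is then a bijection onto $V(\mathcal{G}(X))$, and adjacency in $\mathcal{G}(X)$ is exactly adjacency in one coordinate with equality in the other. That is precisely adjacency in $\mathcal{G}(S)\,\square\,\mathcal{G}(G)$.

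Two points still need care. First, the realizations of $d(S)$ arising this way must be all of $\mathcal{G}(S)$. Starting from $S$, 2-switches on $S$ are exactly those with ends in $I$ and middle vertices in $K$, so by Step 2 and the connectivity of $\mathcal{G}(d(S))$ every realization of $d(S)$ keeps $K$ as a clique and $I$ as independent. Hence the first coordinate ranges over all of $V(\mathcal{G}(S))$. Second, the argument needs the $A_4$ statement for the composition $S'\circ G'$, not only for $X$. This is fine because \Cref{indecomp.characterization} applies to any composition, giving $A_4(S'\circ G')\subseteq A_4(S')\,\dot\cup\,A_4(G')$ even when the factors are further decomposable.

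I expect Step 2 to be the main obstacle, namely checking that switches never mix the factors and never break the bipartition. Once that is settled, the rest is formal bookkeeping.
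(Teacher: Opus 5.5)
Your argument is correct. The paper does not prove this statement: it imports it from Theorem 6 of \cite{barrus2016realization}, where it is phrased for the canonical decomposition of the degree sequence $d$. That formulation presupposes Tyshkevich's theorem that the decomposition is an invariant of $d$, i.e.\ that every realization of $d$ splits along the same labeled vertex sets $V(G_n),\dots,V(G_1)$, with factors realizing the component sequences.

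Your proof gets this invariance another way. You do not use the degree-sequence theory; instead you combine two facts. First, $\mathcal{G}(d)$ is connected. Second, every 2-switch preserves the labeled structure $S'\circ G'$: no member of $\mathcal{Q}$ crosses $V(S')\mid V(G')$, and the switch on an induced $P_4$ $axyb$ with $a,b\in I$, $x,y\in K$ keeps $K$ a clique and $I$ independent. The same connectivity trick applied to $\mathcal{G}(d(S))$ shows that every realization of $d(S)$ keeps the bipartition $(K,I)$. Hence $(S',G')\mapsto S'\circ G'$ is a bijection onto $V(\mathcal{G}(X))$ that matches adjacency with that of the Cartesian product. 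This gives a self-contained proof using only facts already stated in the paper. Also, your two-factor lemma never uses indecomposability, so the induction on $n$ is immediate.

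The one loose point is your claim that \Cref{indecomp.characterization} ``applies to any composition''. As stated, that theorem concerns the canonical decomposition, so transferring it to $S'\circ G'$ requires associativity and uniqueness of the decomposition, which you do not spell out. It is simpler to check directly that a $4$-set $W$ meeting both $V(S')$ and $V(G')$ cannot induce $P_4$, $C_4$ or $2K_2$. Recall that a vertex of $V(S')$ is adjacent to all of $V(G')$ if it lies in $K$ and to none of it if it lies in $I$.
\begin{itemize}
\item If $|W\cap V(G')|=3$, the remaining vertex has degree $0$ or $3$ in $W$, which is impossible.
\item If $|W\cap V(G')|=2$, those two vertices are twins in $W$. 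Since $P_4$ has no twins, this forces either $C_4$, with the other two vertices in $K$ but nonadjacent, or $2K_2$, with the other two vertices in $I$ but adjacent. Both contradict the split structure.
\item If $|W\cap V(G')|=1$, the vertex $g$ there is adjacent exactly to $W\cap K$, so $|W\cap K|\in\{1,2\}$. You then get a triangle or an independent set of size $3$ inside $W$, which is impossible.
\end{itemize}
With this substitution your proof is complete.
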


\begin{theorem}[\cite{barrus2023cliques}, page 2]
	\label{dual.spaces.iso}
	$\mathcal{G}(d)\approx\mathcal{G}(\overline{d})$.
\end{theorem}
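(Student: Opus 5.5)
The plan is to exhibit an explicit isomorphism. Fix the vertex set $[n]$ on which all realizations of $d$ and of $\overline{d}$ live, and define $\Psi$ on $V(\mathcal{G}(d))$ by $\Psi(X)=\overline{X}$, the complement taken on $[n]$. First I will check that $\Psi$ maps realizations of $d$ to realizations of $\overline{d}$. A vertex $v$ of degree $d_v$ in $X$ has degree $n-1-d_v$ in $\overline{X}$, which is exactly the $v$-th entry of $\overline{d}$. Complementation is an involution, and the analogous map from realizations of $\overline{d}$ back to those of $d$ is its inverse. Hence $\Psi$ is a bijection $V(\mathcal{G}(d))\to V(\mathcal{G}(\overline{d}))$.

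The main step is to show that $\Psi$ preserves adjacency in both directions. Suppose $X$ and $Y$ are adjacent in $\mathcal{G}(d)$, so $Y=\tau(X)$ for a 2-switch $\tau$ on $\{ab,xy\}$. Thus $ab,xy\in E(X)$, $ax,by\notin E(X)$, and $Y=X-ab-xy+ax+by$. Passing to complements, $ax,by\in E(\overline{X})$ and $ab,xy\notin E(\overline{X})$, with $a,b,x,y$ still distinct. So the operation replacing $ax,by$ by $ab,xy$ is a legitimate 2-switch on $\overline{X}$, read as the pair of disjoint edges $\{ax,\,by\}$ with the corresponding non-edges being $ab$ and $xy$. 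Its result is
\[
\overline{X}-ax-by+ab+xy=\overline{X-ab-xy+ax+by}=\overline{Y}.
\]
Hence $\Psi(X)\Psi(Y)\in E(\mathcal{G}(\overline{d}))$. This is the same observation already used in the proof of \Cref{grado<=3.unigrafo}: a 2-switch on $\overline{G}$ is a 2-switch on $G$ on the same four vertices, with complementary outcomes. Applying the identical argument to $\Psi^{-1}$, which is again complementation, gives the converse implication. Therefore $\Psi$ is a graph isomorphism and $\mathcal{G}(d)\approx\mathcal{G}(\overline{d})$.

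I expect no real obstacle. The only point needing care is the relabeling of which pairs play the roles of ``edges'' and ``non-edges'' in the complementary 2-switch. I would write out the four pairs explicitly to avoid any confusion, and note that the symmetric difference $E(X)\triangle E(Y)$ equals $E(\overline{X})\triangle E(\overline{Y})$, which is the same 4-cycle of pairs $\{ab,bx... \}$ — more precisely the same set $\{ab,xy,ax,by\}$.
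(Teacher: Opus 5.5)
Your argument is correct. Complementation on the fixed vertex set $[n]$ is a bijection between the realizations of $d$ and those of $\overline{d}$. The 2-switch replacing $ab,xy$ by $ax,by$ in $X$ corresponds exactly to the 2-switch replacing $ax,by$ by $ab,xy$ in $\overline{X}$, so adjacency is preserved in both directions.

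The paper gives no proof of its own; it quotes the result from the literature. Your argument is the standard one, and it rests on the same observation the paper uses in the proof of \Cref{grado<=3.unigrafo}. One cosmetic fix: tidy the garbled last sentence. The common symmetric difference is $\{ab,by,yx,xa\}$, the $4$-cycle $abyxa$ of pairs, and it does not contain $bx$.
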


The classification obtained in the previous sections determines the
realization graph of every prime graph of degree at most $4$; by
\Cref{dual.spaces.iso}, only one sequence in each complementary pair has
to be treated.

\begin{theorem}
	\label{realization.graphs.deg<5}
	Let $X$ be a prime graph with $\deg(X)\leq 4$. Then $\mathcal{G}(X)$
	is isomorphic to one of the ten graphs of
	\Cref{realization.graphs.fig}, according to the following table.
	\begin{center}
		\begin{tabular}{ll}
			\hline
			$\mathcal{G}(X)$ & $X$ \\
			\hline
			$K_2$ & $P_4$\\
			$K_3$ & $C_4$, $2K_2$, $D_{2,1}$, $\overline{D_{2,1}}$\\
			$K_4$ & $D_{3,1}$, $\overline{D_{3,1}}$\\
			$K_{3,3}$ & $T_{111}$, $\overline{T_{111}}$\\
			$K_{1,2,2}$ & $T_{221}$, $\overline{T_{221}}$, $R_{211}$, $\overline{R_{211}}$\\
			$K_5$ & $D_{4,1}$, $\overline{D_{4,1}}$\\
			$K_{2,2,2}$ & $D_{2,2}$, $\overline{D_{2,2}}$, $B_1$, $\overline{B_1}$, $B_3$, $\overline{B_3}$\\
			$K_{1,3,3}$ & $B_2$, $\overline{B_2}$\\
			$\mathcal{G}_1$ & $R_{311}$, $\overline{R_{311}}$, $R_{331}$, $\overline{R_{331}}$\\
			$\mathcal{G}_2$ & $R_{321}$, $\overline{R_{321}}$\\
			\hline
		\end{tabular}
	\end{center}
\end{theorem}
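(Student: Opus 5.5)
By \Cref{clasificacion.activo_deg=2,clasificacion.activo_deg=3} and \Cref{clasificacion.primos.deg=4}, the prime graphs of degree at most $4$ are exactly the $1+4+6+18=29$ graphs listed in the table. By \Cref{dual.spaces.iso} it suffices to treat one graph from each complementary pair. For each remaining $X$ we will do two things: enumerate the labeled realizations of $d(X)$, and determine the 2-switch adjacencies among them. The main tool is that the vertex set of $\mathcal{G}(d)$ splits into isomorphism classes of realizations, and each vertex has degree equal to the 2-switch-degree of the corresponding graph.

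First come the unigraphical cases. By \Cref{grado<=3.unigrafo}, every $X$ of degree at most $3$ other than $T_{221},\overline{T_{221}}$ is a unigraph, and \Cref{G(d).transitivo} shows that $\mathcal{G}(X)$ is vertex-transitive and $k$-regular. We then count realizations directly.
\begin{itemize}
\item For $P_4$ there are $2$ labeled realizations, giving $K_2$.
\item For $C_4$, $2K_2$ and $D_{2,1}$ there are $3$ realizations, which are $2$-regular, giving $K_3$.
\item For $D_{3,1}$ there are $4$ realizations, $3$-regular, giving $K_4$.
\item For $T_{111}$ there are $6$ realizations, $3$-regular. To see that the graph is $K_{3,3}$ rather than the prism, we check that no triangle exists. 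For example, the realizations correspond to the bijections matching the three leaves to the three clique vertices, and a 2-switch acts as a transposition, so the graph is the Cayley graph of $S_3$ with respect to the transpositions, which is $K_{3,3}$.
\end{itemize}
The same permutation viewpoint handles $D_{4,1}$, whose $5$ realizations are indexed by the position of the degree-1 attachment, giving $K_5$. It also handles $D_{2,2}$: there we check via \Cref{barrus.hereditarios} that it is a unigraph, and its realizations are the ways of splitting the $4$ clique vertices into two pairs, counted with the labels of $a,b$. This gives $6$ realizations, $4$-regular, and we verify that each is adjacent to all but its antipode, which yields $K_{2,2,2}$.

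Next come the non-unigraphical cases. For $T_{221}$ the paper already notes $\mathcal{G}\approx K_{1,2,2}$, with four copies of $T_{221}$ (degree $3$) and one copy of $\overline{R_{211}}$ (degree $4$). This shows $d(R_{211})$ and $d(T_{221})$ coincide up to complementation, so the class with $R_{211},\overline{R_{211}}$ is settled too. For $R_{311}=G_4$ we use \Cref{familia.Gk}: a 2-switch produces a graph of degree $6$. We then list all realizations of its degree sequence, grouped by isomorphism type, with class sizes obtained from automorphism counts. We compute the edges between classes by tracking which 2-switch on a representative lands in which class, and call the result $\mathcal{G}_1$. Finally, $R_{331}$ is shown to share the same degree sequence as $R_{311}$ or its complement (we check its degree sequence), so it lies in the same realization graph. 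We do the same for $R_{321}$, producing $\mathcal{G}_2$.

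For $B_1,B_2,B_3$, which have $5$ vertices, we enumerate directly. $B_3$ has degree sequence $(3,3,3,2,1)$-type, and $B_1$ is the house complement. We count the realizations and check regularity, obtaining $K_{2,2,2}$ for $B_1,B_3$ and $K_{1,3,3}$ for $B_2$; in the latter case one apex realization has degree $6$, which must be a non-prime or different graph with the same sequence. Throughout, every class's degree must match its $\deg$, which provides a consistency check.

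The main obstacle will be the explicit enumeration for $R_{311}$, $R_{331}$ and $R_{321}$. There the number of labeled realizations is larger, and identifying the exact isomorphism types $\mathcal{G}_1,\mathcal{G}_2$ requires careful bookkeeping of adjacencies rather than a regularity or transitivity argument. To manage this, we will fix the degree sequence and use \Cref{G.producto} to discard decomposable realizations, which reduces the count, before listing 2-switches.
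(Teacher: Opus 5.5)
Your plan is the same finite computation the paper's proof relies on: one degree sequence at a time, halved by \Cref{dual.spaces.iso}, with unigraph arguments for the easy sequences. However, two steps fail as written.

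The first is the shortcut for $R_{331}$. It does not share a degree sequence with $R_{311}$ or with $\overline{R_{311}}$: $d(R_{311})=(5,4,4,4,3,1,1)$ has $11$ edges, $d(\overline{R_{311}})=(5,5,3,2,2,2,1)$ has $10$, and $d(R_{331})=(5,5,5,4,3,3,1)$ has $13$. The paper's proof states that the only coincidences among the sequences of the $29$ graphs are $d(T_{221})=d(\overline{R_{211}})$, $d(\overline{T_{221}})=d(R_{211})$ and $d(B_1)=d(\overline{B_1})$. So the $\mathcal{G}_1$ row asserts an isomorphism between two different realization graphs, and $\mathcal{G}(R_{331})$ must be computed on its own. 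It does come out as $\mathcal{G}_1$. The six labeled copies of $R_{331}$ are indexed by the degree-$5$ vertex adjacent to the leaf and by the degree-$3$ vertex adjacent to the degree-$4$ vertex. Two of the switches through the leaf move the first index, and the switch between the two degree-$3$ vertices flips the second. The remaining switch gives the unique realization in which the two degree-$3$ vertices have equal neighborhoods, which has 2-switch-degree $6$.

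The second problem is the bookkeeping itself. Recording, for one representative per isomorphism class, into which class each 2-switch leads only determines the quotient of $\mathcal{G}(d)$ by the classes, and that quotient does not determine $\mathcal{G}(d)$. This is exactly where the table is delicate. For $R_{311}$ (and $R_{331}$) and for $B_2$ the class data are identical: one realization of degree $6$ adjacent to all others, and six realizations of degree $4$, each adjacent to it and to three members of its own class. Yet the answers differ: $\mathcal{G}_1$ is a vertex joined to the prism $K_3\,\square\,K_2$, while $K_{1,3,3}$ is a vertex joined to $K_{3,3}$. You must follow adjacencies between labeled realizations inside the large class.
\begin{itemize}
\item For $R_{311}$, the copies are indexed by which leaf hangs from the degree-$5$ vertex and which degree-$4$ vertex carries the other leaf. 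Each switch changes one index, giving $K_2\,\square\,K_3$.
\item For $B_2$, a labeled house is a bijection from three roles (the roof, and the other degree-$2$ neighbor of each degree-$3$ vertex) to the three degree-$2$ labels. Its three house-to-house switches are the three transpositions of roles, giving $K_{3,3}$.
\end{itemize}
Checking regularity cannot separate these, since neither graph is regular.

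Smaller slips:
\begin{itemize}
\item $B_3$ has sequence $(3,3,3,3,2)$, not $(3,3,3,2,1)$.
\item $B_1$ is $C_4$ with a pendant vertex, not the complement of the house. The house is $B_2$, and its complement is $P_5=\overline{B_2}$.
\item $D_{4,1}$ has degree $4$, so \Cref{grado<=3.unigrafo} does not give its unigraphicity; a direct check is needed, though it is easy.
\item \Cref{G.producto} cannot be used to discard realizations, since every realization of $d$ is a vertex of $\mathcal{G}(d)$. In fact decomposability depends only on the degree sequence, so every realization of the sequence of a prime graph is prime; for instance, the degree-$6$ realization $K_{2,3}$ of $d(B_2)$ is prime.
\end{itemize}
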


\begin{proof}
	By
	\Cref{clasificacion.activo_deg=2,clasificacion.activo_deg=3,clasificacion.split.primos.deg=4,clasificacion.no.split.primos.deg=4}
	there are exactly $29$ prime graphs of degree at most $4$, and they
	realize $26$ distinct degree sequences: the only coincidences are
	$d(T_{221})=d(\overline{R_{211}})$, $d(\overline{T_{221}})=d(R_{211})$
	and $d(B_1)=d(\overline{B_1})$. For each of these sequences, listing
	the realizations of $d$ and the 2-switches between them is a finite
	computation, which yields the table.
\end{proof}

\begin{figure}[ht]
	\centering
	\begin{tikzpicture}[scale=0.85,
		every node/.style={draw, circle, fill=white, inner sep=0pt, minimum size=6.5pt},
		nm/.style={draw=none, fill=none},
		]
		
		\begin{scope}[shift={(0,0)}]
			\node (a) at (0.15,0.9) {};
			\node (b) at (1.65,0.9) {};
			\draw (a)--(b);
			\node[nm] at (0.9,-0.45) {$K_2$};
		\end{scope}
		
		\begin{scope}[shift={(3.4,0)}]
			\node (a) at (0.9,1.7) {};
			\node (b) at (0.15,0.45) {};
			\node (c) at (1.65,0.45) {};
			\draw (a)--(b)--(c)--(a);
			\node[nm] at (0.9,-0.45) {$K_3$};
		\end{scope}
		
		\begin{scope}[shift={(6.8,0)}]
			\node (a) at (0.2,1.65) {};
			\node (b) at (1.6,1.65) {};
			\node (c) at (1.6,0.25) {};
			\node (d) at (0.2,0.25) {};
			\draw (a)--(b)--(c)--(d)--(a); \draw (a)--(c); \draw (b)--(d);
			\node[nm] at (0.9,-0.45) {$K_4$};
		\end{scope}
		
		\begin{scope}[shift={(10.2,0)}]
			\node (a) at (0.90,1.78) {};
			\node (b) at (0.09,1.19) {};
			\node (c) at (0.40,0.24) {};
			\node (d) at (1.40,0.24) {};
			\node (e) at (1.71,1.19) {};
			\draw (a)--(b)--(c)--(d)--(e)--(a);
			\draw (a)--(c); \draw (a)--(d); \draw (b)--(d); \draw (b)--(e); \draw (c)--(e);
			\node[nm] at (0.9,-0.45) {$K_5$};
		\end{scope}
		
		\begin{scope}[shift={(0,-3.3)}]
			\node (t1) at (0.15,1.65) {};
			\node (t2) at (0.90,1.65) {};
			\node (t3) at (1.65,1.65) {};
			\node (b1) at (0.15,0.25) {};
			\node (b2) at (0.90,0.25) {};
			\node (b3) at (1.65,0.25) {};
			\draw (t1)--(b1); \draw (t1)--(b2); \draw (t1)--(b3);
			\draw (t2)--(b1); \draw (t2)--(b2); \draw (t2)--(b3);
			\draw (t3)--(b1); \draw (t3)--(b2); \draw (t3)--(b3);
			\node[nm] at (0.9,-0.45) {$K_{3,3}$};
		\end{scope}
		
		\begin{scope}[shift={(3.4,-3.3)}]
			\node (a) at (0.15,1.65) {};
			\node (b) at (1.65,1.65) {};
			\node (c) at (1.65,0.25) {};
			\node (d) at (0.15,0.25) {};
			\node (o) at (0.90,0.95) {};
			\draw (a)--(b)--(c)--(d)--(a);
			\draw (o)--(a); \draw (o)--(b); \draw (o)--(c); \draw (o)--(d);
			\node[nm] at (0.9,-0.45) {$K_{1,2,2}$};
		\end{scope}
		
		\begin{scope}[shift={(6.8,-3.3)}]
			\node (h1) at (0.90,1.80) {};
			\node (h2) at (0.12,1.35) {};
			\node (h3) at (0.12,0.45) {};
			\node (h4) at (0.90,0.00) {};
			\node (h5) at (1.68,0.45) {};
			\node (h6) at (1.68,1.35) {};
			\draw (h1)--(h2)--(h3)--(h4)--(h5)--(h6)--(h1);
			\draw (h1)--(h3); \draw (h2)--(h4); \draw (h3)--(h5);
			\draw (h4)--(h6); \draw (h5)--(h1); \draw (h6)--(h2);
			\node[nm] at (0.9,-0.45) {$K_{2,2,2}$};
		\end{scope}
		
		\begin{scope}[shift={(10.2,-3.3)}]
			\node (h1) at (0.90,1.80) {};
			\node (h2) at (0.12,1.35) {};
			\node (h3) at (0.12,0.45) {};
			\node (h4) at (0.90,0.00) {};
			\node (h5) at (1.68,0.45) {};
			\node (h6) at (1.68,1.35) {};
			\node (o)  at (0.90,0.90) {};
			\draw (h1)--(h2)--(h3)--(h4)--(h5)--(h6)--(h1);
			\draw (o)--(h1); \draw (o)--(h2); \draw (o)--(h3);
			\draw (o)--(h4); \draw (o)--(h5); \draw (o)--(h6);
			\draw (h1) to[bend left=55] (h4);
			\draw (h2) to[bend right=55] (h5);
			\draw (h3) to[bend left=55] (h6);
			\node[nm] at (0.9,-0.45) {$K_{1,3,3}$};
		\end{scope}
		
		\begin{scope}[shift={(3.4,-6.6)}]
			\node (h1) at (0.90,1.80) {};
			\node (h2) at (0.12,1.35) {};
			\node (h3) at (0.12,0.45) {};
			\node (h4) at (0.90,0.00) {};
			\node (h5) at (1.68,0.45) {};
			\node (h6) at (1.68,1.35) {};
			\node (o)  at (0.90,0.90) {};
			\draw (h1)--(h2)--(h3)--(h4)--(h5)--(h6)--(h1);
			\draw (o)--(h1); \draw (o)--(h2); \draw (o)--(h3);
			\draw (o)--(h4); \draw (o)--(h5); \draw (o)--(h6);
			\draw (h1)--(h3); \draw (h4)--(h6);
			\draw (h2) to[bend right=55] (h5);
			\node[nm] at (0.9,-0.45) {$\mathcal{G}_1$};
		\end{scope}
		
		\begin{scope}[shift={(7.4,-6.6)}]
			\node (c1) at (0.00,1.90) {};
			\node (c2) at (1.90,1.90) {};
			\node (c3) at (1.90,0.00) {};
			\node (c4) at (0.00,0.00) {};
			\node (k1) at (0.95,1.45) {};
			\node (k2) at (1.60,0.95) {};
			\node (k3) at (0.95,0.45) {};
			\node (k4) at (0.30,0.95) {};
			\draw (c1)--(c2)--(c3)--(c4)--(c1);
			\draw (k1)--(k2)--(k3)--(k4)--(k1); \draw (k1)--(k3); \draw (k2)--(k4);
			\draw (c1)--(k1); \draw (c2)--(k1); \draw (c2)--(k2); \draw (c3)--(k2);
			\draw (c3)--(k3); \draw (c4)--(k3); \draw (c4)--(k4); \draw (c1)--(k4);
			\node[nm] at (0.95,-0.45) {$\mathcal{G}_2$};
		\end{scope}
		
	\end{tikzpicture}
	\caption{The ten realization graphs of \Cref{realization.graphs.deg<5}.}
	\label{realization.graphs.fig}
\end{figure}

The four graphs whose realization graph is $K_{1,2,2}$ are precisely the
four members of $\mathcal{F}$ drawn in Fig.~6 of
\cite{barrus2012switches}, and $T_{111}$ is the $3$-net, so
$\mathcal{G}(T_{111})\approx K_{3,3}$ is the transposition graph $T_3$, in
accordance with Example~3 of \cite{barrus2016realization}. In degree $4$
the converse of \Cref{G(d).transitivo} fails: $\mathcal{G}(B_1)\approx
K_{2,2,2}$ is regular, yet $d(B_1)=d(\overline{B_1})$ with
$B_1\not\approx\overline{B_1}$.

\begin{corollary}
	\label{G(d).grado.menor.5}
	If $\mathcal{G}(d)$ has a vertex $X$ of degree $k\in[4]$, then
	$\mathcal{G}(d)$ is the Cartesian product of the realization graphs of
	the prime factors of $X$, each of which is one of the ten graphs of
	\Cref{realization.graphs.fig}. In particular $\mathcal{G}(d)\approx
	K_2$ if $k=1$, and $\mathcal{G}(d)\approx K_3$ or $C_4$ if $k=2$.
\end{corollary}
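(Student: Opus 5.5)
The plan is to reduce the statement to the product formula and the classification of low-degree prime graphs. Let $X$ be a vertex of $\mathcal{G}(d)$ of degree $k\in[4]$, so $\deg(X)=k$. Take the Tyshkevich decomposition $X=G_n\circ\cdots\circ G_1$.

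First I will isolate the active part of $X$. Inactive vertices should contribute trivially. A factor with $\deg(G_i)=0$ has, by \Cref{degreeofG}, no members of $\mathcal{Q}_{G_i}$. Its degree sequence is therefore unigraphical with a single labeled realization, since any other realization would be reached by a 2-switch. Hence $\mathcal{G}(G_i)\approx K_1$, which is a unit for $\square$. I also need that every indecomposable factor with positive degree is prime, i.e.\ active. I will try to derive this from \Cref{indecomp.characterization}: in an indecomposable factor, $A_4(G_i)$ is connected, and once $|G_i|\ge 2$ and $\deg(G_i)\geq1$ this forces every vertex to lie on an edge of $A_4(G_i)$. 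The degenerate case is $|G_i|=1$, a single inactive vertex, which again gives $\mathcal{G}\approx K_1$. So the prime factors of $X$ are exactly the $G_i$ of positive degree, and the remaining factors can be discarded.

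Next I will apply \Cref{hechos.previos}(1) repeatedly along the decomposition. This gives $\deg(X)=\sum_i\deg(G_i)=k\le 4$, so each prime factor has degree at most $4$. By \Cref{realization.graphs.deg<5}, each $\mathcal{G}(G_i)$ is one of the ten graphs of \Cref{realization.graphs.fig}. \Cref{G.producto} then yields $\mathcal{G}(d)=\mathcal{G}(X)\approx\mathcal{G}(G_n)\square\cdots\square\mathcal{G}(G_1)$, and after dropping the $K_1$ factors this is the product over the prime factors. This proves the main assertion.

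For the particular cases I will enumerate the partitions of $k$ into degrees of prime factors.
\begin{itemize}
\item If $k=1$, there is a single prime factor of degree $1$. It is $P_4$ by \Cref{clasificacion.activo_deg=2}, so $\mathcal{G}(d)\approx K_2$.
\item If $k=2$, either there is one prime factor of degree $2$, one of $C_4,2K_2,D_{2,1},\overline{D_{2,1}}$, giving $K_3$ from the table; or there are two factors $P_4$, giving $K_2\square K_2\approx C_4$.
\end{itemize}

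The main obstacle is the first step. I must justify carefully that the factors of positive degree are exactly prime, and that the inactive parts contribute $K_1$. If the characterization via $A_4$ is not tight enough, I will fall back on the remark in the introduction that every factor of an active graph is prime. To use it, I will first observe that the inactive vertices of $X$ form factors of degree $0$, because by \Cref{indecomp.characterization} the members of $\mathcal{Q}_X$ live inside single factors.
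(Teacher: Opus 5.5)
Your proposal is correct and follows essentially the same route as the paper: treat the one-vertex factors as neutral $K_1$ factors, use the connectivity of $A_4(G_i)$ from \Cref{indecomp.characterization} to see that every other factor is prime, add the degrees via \Cref{hechos.previos}, and conclude with \Cref{G.producto} and \Cref{realization.graphs.deg<5}, with the same case analysis for $k=1,2$. The fallback you mention is not needed: a connected $A_4(G_i)$ on at least two vertices has no isolated vertices, so your extra hypothesis $\deg(G_i)\geq 1$ holds automatically there.
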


\begin{proof}
	Let $G_n\circ\cdots\circ G_1$ be the Tyshkevich decomposition of $X$.
	If $|G_i|=1$, then $\mathcal{G}(G_i)\approx K_1$, a neutral factor for
	$\square$; otherwise $A_4(G_i)$ is connected by
	\Cref{indecomp.characterization}, so every vertex of $G_i$ lies in a
	member of $\mathcal{Q}_{G_i}$ and $G_i$ is prime. The prime factors
	$X_1,\dots,X_m$ of $X$ thus satisfy $\sum_i\deg(X_i)=k\leq 4$ by
	\Cref{hechos.previos}, and the first assertion follows from
	\Cref{G.producto,realization.graphs.deg<5}. If $k=1$, then $m=1$ and
	$X_1\approx P_4$; if $k=2$, then either $m=1$ and
	$\mathcal{G}(X_1)\approx K_3$, or $m=2$ and
	$\mathcal{G}(d)\approx K_2\,\square\,K_2\approx C_4$.
\end{proof}

\section*{Acknowledgements}
This work was partially supported by Universidad Nacional de San Luis, grants PI UNSL 03-1326, PROIPRO 03-2923, and PROINI 03-124, and Consejo Nacional de Investigaciones Cient\'ificas y T\'ecnicas grant PIP $11220220$ $100068$CO. 


	
\end{document}